\numberwithin{equation}{section}
  \theoremstyle{remark}
  \newtheorem*{acknowledgement*}{\protect\acknowledgementname}
\theoremstyle{plain}
\newtheorem{thm}{\protect\theoremname}
  \theoremstyle{definition}
  \newtheorem{defn}[thm]{\protect\definitionname}
  \theoremstyle{plain}
  \newtheorem{prop}[thm]{\protect\propositionname}
  \theoremstyle{remark}
  \newtheorem{rem}[thm]{\protect\remarkname}
  \theoremstyle{remark}
  \newtheorem{claim}[thm]{\protect\claimname}
  \theoremstyle{plain}
  \newtheorem{cor}[thm]{\protect\corollaryname}
  \theoremstyle{plain}
  \newtheorem{conjecture}[thm]{\protect\conjecturename}
\theoremstyle{plain}
\newtheorem{open}[thm]{Open Problem}
\numberwithin{thm}{subsection}
\DeclareMathOperator{\im}{im}
\DeclareMathOperator{\Spec}{Spec}
\date{}
  \providecommand{\acknowledgementname}{Acknowledgement}
  \providecommand{\claimname}{Claim}
  \providecommand{\conjecturename}{Conjecture}
  \providecommand{\corollaryname}{Corollary}
  \providecommand{\definitionname}{Definition}
  \providecommand{\propositionname}{Proposition}
  \providecommand{\remarkname}{Remark}
\providecommand{\theoremname}{Theorem}
\begin{document}

\title{Ramanujan Complexes\\
and High Dimensional Expanders%
\thanks{This paper is based on notes prepared for the Takagi Lectures, Tokyo
2012.%
}}

\author{Alexander Lubotzky\\
{\small Einstein Institute of Mathematics}\\
{\small Hebrew University}\\
{\small Jerusalem 91904 ISRAEL}\\
{\small alex.lubotzky@mail.huji.ac.il}}
\maketitle
\begin{abstract}
Expander graphs in general, and Ramanujan graphs in particular, have
been of great interest in the last three decades with many applications
in computer science, combinatorics and even pure mathematics. In these
notes we describe various efforts made in recent years to generalize
these notions from graphs to higher dimensional simplicial complexes.
\end{abstract}
\tableofcontents{}

\section{Introduction}

Expander graphs are highly connected finite sparse graphs. These graphs
play a fundamental role in computer science and combinatorics (cf.
\cite{lubotzky1994discrete,Hoory2006}, and the references within)
and in recent years even found numerous applications in pure mathematics
(\cite{Lubotzky2012}). Among these graphs, Ramanujan graphs stand
out as optimal expanders (at least from the spectral point of view).
The theory of expanders and Ramanujan graphs has led to a very fruitful
interaction between mathematics and computer science (and between
mathematicians and computer scientists). In the early days, deep mathematics
(e.g. Kazhdan property (T) and Ramanujan conjecture) has been used
to construct expanders and Ramanujan graphs. But recently, the theory
of computer science pays its debt to mathematics and expanders start
to appear more and more also within pure mathematics.

The fruitfulness of this theory calls for a generalization to high
dimensional theory. Here the theory is much less developed. The goal
of these notes is to describe some of these efforts and to call the
attention of the mathematical and computer science communities to
this challenge. We strongly believe that a beautiful and useful theory
is waiting for us to be explored.

Most of the notes will be dedicated to the story of Ramanujan complexes.
These generalizations of Ramanujan graphs, which has been developed
in \cite{cartwright2003ramanujan,li2004ramanujan,Lubotzky2005a,Lubotzky2005b,sarveniazi2007explicit}
became possible by the significant development of the theory of automorphic
forms in positive characteristic and especially the work of L.\ Lafforgue
\cite{lafforgue2002chtoucas}. In §1, we will describe the classical
theory of Ramanujan graphs, in a way which will pave the way for a
smooth presentation in §2, of the much more complicated theory of
Ramanujan complexes.

The situation with high dimensional expanders is more chaotic. Here
it is not even agreed what should be the ``right'' definition. Several
generalizations of the concept of expander graph have been suggested,
which are not equivalent. It is not clear at this point which one
is more useful. Each has its own charm and part of the active research
on this subject is to understand the relationships between the various
definitions.

We describe these activities briefly in §3. It can be expected (and,
in fact, I hope!) that these notes will not be up to date by the time
they will appear in press\ldots{}
\begin{acknowledgement*}
The author is indebted to Konstantin Golubev, Gil Kalai, Tali Kaufman,
Roy Meshulam and Uli Wagner for many discussions regarding the material
of these notes. I am especially grateful to Ori Parzanchevski, whose
help and advice in preparing these notes have improved them significantly.

This work was supported by ERC, ISF and NSF. Some of this work was
carried out while the author visited Microsoft Research laboratory
in Cabridge, Ma. We are grateful for the warm hospitality and support.
We are also grateful for the Mathematical Society of Japan for the
invitation to deliver the Takagi lectures, the hospitality and the
valuable remarks we received from the audience.
\end{acknowledgement*}

\section{Ramanujan Graphs}

In this chapter we will survey Ramanujan graphs, which are optimal
expanding graphs from a spectral point of view. The material is quite
well known by now and has been described in various places (\cite{Lubotzky1988,Sarnak1990,lubotzky1994discrete,Valette1997}).
We present it here in a way which will pave the way for the high dimensional
generalization - the Ramanujan complexes - which will come in the
next chapter.

\subsection{\label{sub:Eigenvalues-and-expanders}Eigenvalues and expanders}

Let $X=\left(V,E\right)$ be a finite connected $k$-regular graph,
$k\geq3$, with a set $V$ of $n$ vertices, and adjacency matrix
$A=A_{X}$, i.e.\ $A$ is an $n\times n$ matrix indexed by the vertices
of $X$ and $A_{ij}$ is equal to the number of edges between $i$
and $j$ (which is either $0$ or $1$ if $X$ is a simple graph).
\begin{defn}
The graph $X$ is called \emph{Ramanujan }if for every eigenvalue
$\lambda$ of the symmetric matrix $A$, either $\lambda=\pm k$ (``the
trivial eigenvalues'') or $\left|\lambda\right|\leq2\sqrt{k-1}$.
\end{defn}
Recall that $k$ is always an eigenvalue of $A$ (with the constant
vector/function as an eigenfunction) while $-k$ is an eigenvalue
of $A$ iff $X$ is bi-partite, i.e.\ the vertices of $X$ can be
divided into two disjoint sets $Y$ and $Z$ and every edge $e$ in
$E$, has one endpoint in $Y$ and one in $Z$. In this case, the
eigenfunction is $1$ on $Y$ and $-1$ on $Z$.

Ramanujan graphs have been defined and constructed in \cite{Lubotzky1988}
(see also \cite{margulis1988explicit} and see \cite{Sarnak1990,lubotzky1994discrete,Valette1997}
for more comprehensive treatment). The importance of the number $2\sqrt{k-1}$
comes from Alon-Boppana Theorem which asserts that for any fixed $k$,
no better bound can be obtained on the non-trivial eigenvalues of
an infinite sequence of finite $k$-regular graphs.
\begin{thm}[Alon-Boppana (cf. \cite{Lubotzky1988,Nilli1991})]
\label{thm:Alon-Boppana}For a finite connected $k$-regular graph
$X$, denote 
\begin{align*}
\mu_{1}\left(X\right) & =\max\left\{ \lambda\,\middle|\,\lambda\mathrm{\: an\: eigenvalue\: of\:}A\mathrm{\: and}\:\lambda\neq k\right\} \\
\mu_{0}\left(X\right) & =\max\left\{ \left|\lambda\right|\,\middle|\,\lambda\mathrm{\: an\: eigenvalue\: of\:}A\mathrm{\: and}\:\lambda\neq k\right\} \\
\mu\left(X\right) & =\max\left\{ \left|\lambda\right|\,\middle|\,\lambda\mathrm{\: an\: eigenvalue\: of\:}A\mathrm{\: and}\:\lambda\neq\pm k\right\} .
\end{align*}
If $\left\{ X_{i}\right\} _{i=1}^{\infty}$ is a sequence of such
graphs with $\left|X_{i}\right|\rightarrow\infty$, then 
\[
\liminf_{i\rightarrow\infty}\mu\left(X_{i}\right)\geq2\sqrt{k-1}.
\]

\end{thm}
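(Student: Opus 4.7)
My plan is to use the Courant--Fischer variational characterization:
\[
\mu(X_i) \geq \frac{\langle A_{X_i} f, f\rangle}{\langle f, f\rangle}
\]
whenever $f$ is a nonzero real function on the vertices of $X_i$ orthogonal to $\mathbf{1}$ and, if $X_i$ is bipartite, also to the bipartition function $\chi_i$. So it suffices to exhibit, for every $\epsilon > 0$ and every sufficiently large $i$, such an $f$ whose Rayleigh quotient is at least $2\sqrt{k-1} - \epsilon$. The guiding heuristic is that every $k$-regular graph is covered by the infinite $k$-regular tree $T_k$, whose adjacency operator has spectral radius exactly $2\sqrt{k-1}$; approximate eigenfunctions for the top of the $T_k$-spectrum should therefore transplant to $X_i$ whenever there is room.

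The test function I would use is the classical radial truncation. Fix an integer $r \geq 1$ (to be sent to $\infty$ at the end), and, for an edge $e$ of a $k$-regular graph, layer its $r$-neighborhood by $V_j = \{v \mid d(v, e) = j\}$, $0 \leq j \leq r$. Set
\[
f_{e,r}(v) = \begin{cases} (k-1)^{-j/2} & v \in V_j,\\ 0 & \text{otherwise.}\end{cases}
\]
In $T_k$ this is a truncation of the radial spherical function sitting at the edge of the spectrum, and a direct computation gives $\langle A_{T_k} f_{e,r}, f_{e,r}\rangle / \|f_{e,r}\|^2 \to 2\sqrt{k-1}$ as $r \to \infty$. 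On any $k$-regular graph $X$ I would derive the same asymptotic $\geq 2\sqrt{k-1}(1 - O(1/r))$: writing $n_j = |V_j|$, $m_j$ for the number of edges inside $V_j$, and $d_j$ for the number of edges between $V_j$ and $V_{j+1}$, one exploits the $k$-regularity identity $k n_j = 2m_j + d_j + d_{j-1}$ to rewrite the numerator $2\sum_{\{u,v\}\in E} f(u) f(v)$ and denominator $\sum_v f(v)^2$ in terms of the $d_j$'s, $m_j$'s and $n_j$'s, and then estimates.

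To arrange the required orthogonality, I observe that $|B(e, r)| \leq C_{k,r}$ is bounded independently of $X_i$. Since $|X_i| \to \infty$, for $i$ sufficiently large I can select three edges $e_1, e_2, e_3$ in $X_i$ with pairwise distance $> 2r+2$; the three functions $f_\ell := f_{e_\ell, r}$ have pairwise disjoint supports and are therefore orthogonal. In the three-dimensional span of $\{f_1, f_2, f_3\}$ I impose the linear conditions $f \perp \mathbf{1}$ (always) and $f \perp \chi_i$ (in the bipartite case), cutting at most two dimensions; a nonzero $f = \sum_\ell \alpha_\ell f_\ell$ survives. Because the supports are disjoint, $\langle A f, f\rangle / \|f\|^2 = \sum_\ell \alpha_\ell^2 \langle A f_\ell, f_\ell\rangle \big/ \sum_\ell \alpha_\ell^2 \|f_\ell\|^2$ is a convex combination of the individual Rayleigh quotients, hence still at least $2\sqrt{k-1}(1 - O(1/r))$. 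Sending first $i \to \infty$ and then $r \to \infty$ delivers the conclusion.

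The hardest step will be the Rayleigh-quotient estimate on an arbitrary $k$-regular graph: the ball $B(e, r)$ in $X_i$ need not be isometric to a ball in $T_k$, and short cycles produce extra edges within layers $V_j$ that are absent in the tree. Nilli's insight, which I would follow, is that for the radial positive function $f_{e,r}$ these extra intra-layer edges enter the numerator with positive sign and leave the denominator unchanged, so the clean tree lower bound is preserved rather than degraded. Verifying this rigorously, while controlling the boundary contribution from the truncation level $V_r$ so that it is of lower order in $r$, is the only step requiring real computation; the rest of the argument is structural.
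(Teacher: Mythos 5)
The paper itself does not prove Theorem \ref{thm:Alon-Boppana}; it only cites \cite{Lubotzky1988,Nilli1991}. Your proposal is precisely the argument of Nilli's paper, so the comparison is to the cited source rather than to text in this survey. The structure is correct: radially decaying test functions $(k-1)^{-j/2}$ on the layers $V_j$ of a ball around an edge, transplanted from the tree to $X_i$, with disjointly supported copies used to impose the orthogonality constraints. One small remark on economy: since $\mu(X)\ge|\lambda_2(X)|\ge\lambda_2(X)=\mu_1(X)$ once $|X|\ge 3$ (both $k$ and $-k$ have multiplicity at most one in a connected graph), it suffices to bound $\mu_1$ from below, so only the constraint $f\perp\mathbf{1}$ is needed and two far-apart edges suffice; imposing $f\perp\chi_i$ as well is harmless but unnecessary.

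The one place where your heuristic could be sharpened is the "hardest step." Saying that intra-layer edges "enter the numerator with positive sign and leave the denominator unchanged" is not quite a proof that short cycles do not hurt, because short cycles also shrink the layer sizes $n_j$, which changes \emph{both} numerator and denominator. The clean way to make this rigorous is to write
\[
\langle Af,f\rangle = k\|f\|^{2}-\sum_{\{u,v\}\in E}\bigl(f(u)-f(v)\bigr)^{2},
\]
observe that intra-layer edges contribute \emph{zero} to the sum (since $f$ is constant on each $V_j$), that edges from $V_j$ to $V_{j+1}$ contribute $d_j\bigl((k-1)^{-j/2}-(k-1)^{-(j+1)/2}\bigr)^{2}$, and that edges leaving $V_r$ contribute $d_r(k-1)^{-r}$. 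Combining with the $k$-regularity bound $d_j\le(k-1)n_j$ (each vertex in $V_j$, $j\ge1$, has a neighbor in $V_{j-1}$; for $j=0$ the two endpoints of $e$ are joined by $e$) and setting $S_j=n_j(k-1)^{-j}$, which is non-increasing in $j$, one gets
\[
\frac{\langle Af,f\rangle}{\|f\|^{2}} \;\ge\; 2\sqrt{k-1}\;-\;\bigl(2\sqrt{k-1}-1\bigr)\frac{S_r}{\sum_{j=0}^{r}S_j}\;\ge\;2\sqrt{k-1}-\frac{2\sqrt{k-1}-1}{r+1},
\]
which is exactly your claimed $2\sqrt{k-1}\,(1-O(1/r))$ and requires only that $V_0,\dots,V_r$ be nonempty, i.e.\ that $X_i$ is large enough for $B(e,r)$ to be a proper subset. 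With this computation supplied, the proof is complete.
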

The hidden reason for the number $2\sqrt{k-1}$ is: All the finite
connected $k$-regular graphs are covered by the $k$-regular tree,
$T=T_{k}$. Let $A_{T}$ be the adjacency operator of $T$, i.e.,
for every function $f$ on the vertices of $T$ and for every vertex
$x$ of it, 
\[
A_{T}\left(f\right)\left(x\right)=\sum_{y\sim x}f\left(y\right)
\]
namely, $A_{T}$ sums $f$ over the neighbors of $x$. Then $A_{T}$
defines a self adjoint operator $L^{2}\left(T\right)\rightarrow L^{2}\left(T\right)$.
\begin{prop}[\cite{kesten1959symmetric}]
\label{prop:tree-spec}The spectrum of $A_{T}$ is $\left[-2\sqrt{k-1},2\sqrt{k-1}\right]$.
\end{prop}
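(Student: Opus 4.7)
The plan is to prove the two inclusions $\sigma(A_{T})\subseteq[-2\sqrt{k-1},\,2\sqrt{k-1}]$ and $[-2\sqrt{k-1},\,2\sqrt{k-1}]\subseteq\sigma(A_{T})$ separately: the first via a clean factorization of $A_{T}$, the second by constructing Weyl sequences from truncated radial functions.

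For the upper bound I would use a horocyclic shift. Pick any end $\omega$ of $T$, so that every vertex $x$ has a unique ``upward'' neighbor $x^{+}$ on the ray from $x$ to $\omega$, and exactly $k-1$ ``downward'' neighbors $y$ (those with $y^{+}=x$). Define $U\colon L^{2}(T)\to L^{2}(T)$ by $(Uf)(x)=f(x^{+})$. Since each vertex has exactly $k-1$ preimages under $x\mapsto x^{+}$, a change of variables gives $\|Uf\|_{2}^{2}=(k-1)\|f\|_{2}^{2}$, so $\|U\|=\sqrt{k-1}$. A direct computation shows $(U^{*}g)(y)=\sum_{x^{+}=y}g(x)$, i.e.\ the sum of $g$ over the downward neighbors of $y$, from which $A_{T}=U+U^{*}$. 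Hence $\|A_{T}\|_{\mathrm{op}}\le2\sqrt{k-1}$, and since $A_{T}$ is self-adjoint this forces $\sigma(A_{T})\subseteq[-2\sqrt{k-1},\,2\sqrt{k-1}]$.

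For the reverse inclusion, fix a base vertex $o$ and, given $\lambda=2\sqrt{k-1}\cos\theta$ with $\theta\in(0,\pi)$, look for a radial approximate eigenfunction $f(x)=\phi(d(o,x))$. The equation $A_{T}f=\lambda f$ becomes the three-term recurrence $\phi(n-1)+(k-1)\phi(n+1)=\lambda\phi(n)$, whose characteristic roots $z_{\pm}=(k-1)^{-1/2}e^{\pm i\theta}$ yield the general solution $\phi(n)=(k-1)^{-n/2}(A\cos n\theta+B\sin n\theta)$. Set $f_{R}(x)=\phi(d(o,x))\cdot\chi_{\{d(o,x)\le R\}}$. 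Since $|S_{n}|=k(k-1)^{n-1}$ exactly cancels the $(k-1)^{-n}$ in $|\phi(n)|^{2}$, one gets $\|f_{R}\|_{2}^{2}\sim c_{A,B}\,R$ for any nontrivial $(A,B)$. Meanwhile $(A_{T}-\lambda)f_{R}$ is supported only on the two spheres $S_{R}$ and $S_{R+1}$ where truncation breaks the recurrence; using the recurrence itself to identify those boundary values, a short computation gives $\|(A_{T}-\lambda)f_{R}\|_{2}^{2}=O(1)$. Therefore $\|(A_{T}-\lambda)f_{R}\|/\|f_{R}\|=O(R^{-1/2})\to0$, placing $\lambda$ in the approximate point spectrum and hence in $\sigma(A_{T})$; the endpoints $\pm2\sqrt{k-1}$ then lie in $\sigma(A_{T})$ because spectra are closed.

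I expect the main obstacle to be the boundary-error bookkeeping in the last step. What makes it go through cleanly is that both fundamental solutions $z_{\pm}^{n}$ of the recurrence have the same modulus $(k-1)^{-n/2}$, so no decaying branch needs to be avoided --- any $(A,B)\neq0$ already produces a properly normalized Weyl sequence, which is precisely the feature underlying the value $2\sqrt{k-1}$ (as opposed to some larger number) in the statement.
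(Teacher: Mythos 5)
Your proof is correct, but note that the paper does not actually supply a proof for this proposition --- it is stated as a citation to Kesten's 1959 paper, so there is nothing internal to compare against. Kesten's original argument proceeds through the generating function (Green's function) of return probabilities of the simple random walk and computes the spectral radius from its radius of convergence; your route is different and more elementary, and both halves of it check out. The horocyclic factorization is clean: fixing an end $\omega$, each vertex has exactly $k-1$ preimages under $x\mapsto x^{+}$, so $\|U\|=\sqrt{k-1}$, and $A_{T}=U+U^{*}$ together with self-adjointness gives $\sigma(A_{T})\subseteq[-2\sqrt{k-1},2\sqrt{k-1}]$. For the reverse inclusion, your recurrence $\phi(n-1)+(k-1)\phi(n+1)=\lambda\phi(n)$ has the two roots $(k-1)^{-1/2}e^{\pm i\theta}$ of equal modulus exactly when $|\lambda|\le 2\sqrt{k-1}$, the sphere sizes $|S_{n}|=k(k-1)^{n-1}$ cancel the $(k-1)^{-n}$ decay, and the truncation error is supported on two spheres with $O(1)$ total mass, so $\|(A_{T}-\lambda)f_{R}\|/\|f_{R}\|=O(R^{-1/2})$. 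The only cosmetic omissions are that the recurrence you use holds only for $n\ge 1$ (at $n=0$ it reads $k\phi(1)=\lambda\phi(0)$, contributing another $O(1)$ boundary error that is harmless), and that you should note the Ces\`aro average of $(A\cos n\theta+B\sin n\theta)^{2}$ is $(A^{2}+B^{2})/2>0$ for all $\theta\in(0,\pi)$ to justify $\|f_{R}\|^{2}\sim cR$; neither affects the conclusion.

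What your approach buys over Kesten's is self-containment and transparency about where the number $2\sqrt{k-1}$ comes from (it is the threshold at which the two characteristic roots become unimodular, so neither branch decays and a Weyl sequence can be built). What Kesten's buys is generality: the random-walk/Green's-function method extends to arbitrary finitely generated groups and is what produces the amenability criterion, whereas your argument is tailored to the tree geometry.
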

Of course, $k$ is not an eigenvalue of $A_{T}$ as the constant function
is not in $L^{2}$. It is even not in the spectrum (unless $k=2$,
in which case $T_{k}$ is a Cayley graph of the amenable group $\mathbb{Z}$,
but this is a different story). But, $k$ is necessarily an eigenvalue
for all the adjacency operators induced on the finite quotients $\Gamma\backslash T$,
where $\Gamma$ is a discrete cocompact subgroup of $\mathrm{Aut}\left(T\right)$.
Similarly, $-k$ is an eigenvalue of the finite quotient $\Gamma\backslash T$
if it is bi-partite (which happens if $\Gamma=\pi_{1}\left(\Gamma\backslash T\right)$
preserves the two-coloring of the vertices of $T$). Now, Ramanujan
graphs are the ``ideal objects'' having their non-trivial spectrum
as good as the ``ideal object'' $T$. 

There is another way to characterize Ramanujan graphs. These are the
graphs which satisfy the ``Riemann hypothesis'', i.e.\ all the
poles of the Ihara zeta function associated with the graph lie on
the line $\Re\left(s\right)=\frac{1}{2}$. See \cite[§4.5]{lubotzky1994discrete}
and especially the works of Ihara \cite{ihara1966discrete}, Sunada
\cite{sunada1988fundamental} and Hashimoto \cite{hashimoto1989zeta}.

The work of Ihara showed the close connection between number theoretic
questions and the combinatorics of some associated graphs. While it
was Satake \cite{satake1966spherical} who showed how the classical
Ramanujan conjecture can be expressed in a representation theoretic
way. These works have paved the way to the explicit constructions
of Ramanujan graphs to be presented in §\ref{sub:Bruhat-Tits-trees}
and §\ref{sub:Explicit-constructions}.

Ramanujan graphs have found numerous applications in combinatorics,
computer science and pure mathematics. We will not describe these
but rather refer the interested readers to the thousands references
appearing in google scholar when one looks for Ramanujan graphs.

We should mention however their main application and original motivation:
expanders.
\begin{defn}
\label{def:Cheeger}For $X$ a $k$-regular graph on $n$ vertices,
denote:
\[
h\left(X\right)=\min_{0<\left|A\right|<\left|V\right|}\frac{n\cdot\left|E\left(A,V\backslash A\right)\right|}{\left|A\right|\left|V\backslash A\right|}
\]
where $E\left(A,V\backslash A\right)$ is the set of edges from $A$
to its complement. We call $h\left(X\right)$ the Cheeger constant
of $X$.\end{defn}
\begin{rem}
\label{rem:old-cheeger}In most references, the Cheeger constant is
defined as 
\[
\overline{h}\left(X\right)=\min_{0<\left|A\right|\leq\nicefrac{\left|V\right|}{2}}\frac{\left|E\left(A,V\backslash A\right)\right|}{\left|A\right|}.
\]
Clearly $\overline{h}\left(X\right)\leq h\left(X\right)\leq2\overline{h}\left(X\right)$.
For our later purpose, it will be more convenient to work with $h\left(X\right)$.\end{rem}
\begin{defn}
\label{def:expander}The graph $X$ is called $\varepsilon$-expander
(for $0<\varepsilon\in\mathbb{R}$) if $h\left(X\right)\geq\varepsilon$.
\end{defn}
Expander graphs are of great importance in computer science. Ramanujan
graphs give outstanding expanders due to the following result:
\begin{thm}[\cite{Tanner1984,Dodziuk1984,Alon1985,Alon1986}]
\label{thm:discrete-cheeger}For $X$ as above,
\[
\frac{h^{2}\left(X\right)}{8k}\leq k-\mu_{1}\left(X\right)\leq h\left(X\right).
\]
In particular, Ramanujan $k$-regular graphs are $\varepsilon$-expanders
with $\varepsilon=k-2\sqrt{k-1}$ (or if one prefers the more standard
notation $\overline{h}\left(X\right)\geq\frac{k}{2}-\sqrt{k-1}$).
\end{thm}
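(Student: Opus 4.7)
I will prove the two inequalities separately, handling the easier upper bound on $k-\mu_1(X)$ first and then tackling the discrete Cheeger inequality.

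For the right inequality $k-\mu_{1}(X) \leq h(X)$, the plan is to use the variational characterization
\[
k - \mu_{1}(X) = \min_{\substack{f \perp \mathbf{1} \\ f \neq 0}} \frac{\langle (kI-A)f,f\rangle}{\langle f,f\rangle} = \min_{\substack{f\perp\mathbf{1}\\f\neq 0}}\frac{\sum_{\{u,v\}\in E}(f(u)-f(v))^{2}}{\sum_{v} f(v)^{2}}
\]
and to plug in a well-chosen test function. Given the set $A\subset V$ attaining the minimum in Definition~\ref{def:Cheeger}, with $a = |A|$ and $n-a = |V\setminus A|$, I would try $f = (n-a)\mathbf{1}_{A} - a\mathbf{1}_{V\setminus A}$. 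A routine check shows $f\perp\mathbf{1}$, $\|f\|^{2}=a(n-a)n$, and $(f(u)-f(v))^{2} = n^{2}$ exactly for edges crossing the cut, giving a Rayleigh quotient of $\frac{n\cdot |E(A,V\setminus A)|}{a(n-a)} = h(X)$.

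For the harder left inequality $h^{2}(X)/(8k)\le k-\mu_{1}(X)$, I will prove first the analogous inequality $\overline{h}(X)^{2}/(2k) \le k-\mu_{1}(X)$ for the constant of Remark~\ref{rem:old-cheeger}, and then translate using $\overline{h}\geq h/2$. The strategy has two steps. Step (i): produce a non-negative function $g$ with $|\mathrm{supp}(g)|\le n/2$ whose Rayleigh quotient is at most $k-\mu_{1}(X)$. Taking $f$ to be a $\mu_{1}$-eigenfunction and replacing $f$ by $-f$ if necessary, one may assume $V_{+} := \{v:f(v)>0\}$ has $|V_{+}|\le n/2$; then $g := f_{+}$ does the job. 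Indeed, for $u\in V_{+}$,
\[
\sum_{v\sim u}(g(u)-g(v)) \;=\; kf(u) - \sum_{v\sim u} f_{+}(v) \;\le\; kf(u)-\sum_{v\sim u}f(v) \;=\; (k-\mu_{1})f(u),
\]
and summing $g(u)$ times this inequality over $u\in V_{+}$ gives $\langle (kI-A)g,g\rangle \le (k-\mu_{1})\|g\|^{2}$.

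Step (ii) is the co-area/Cauchy--Schwarz step, which is where the main difficulty lies. Applied to $g$ as above one estimates
\[
\Bigl(\sum_{\{u,v\}\in E}|g(u)^{2}-g(v)^{2}|\Bigr)^{2} \;\le\; \sum_{\{u,v\}\in E}(g(u)-g(v))^{2}\cdot\sum_{\{u,v\}\in E}(g(u)+g(v))^{2}.
\]
The second factor on the right is bounded by $2k\|g\|^{2}$ using $k$-regularity. To bound the left-hand side from below by $\overline{h}\|g\|^{2}$, I would order the vertices $v_{1},\dots,v_{n}$ so that $g(v_{1})\ge\dots\ge g(v_{n})=0$, write $|g(u)^{2}-g(v)^{2}|$ as a telescoping sum along the level sets $S_{\ell} = \{v_{1},\dots,v_{\ell}\}$, and use that $|S_{\ell}|\le n/2$ for all $\ell$ with $g(v_{\ell})>0$ so that $|E(S_{\ell},V\setminus S_{\ell})|\ge \overline{h}(X)\,|S_{\ell}|$; an Abel summation then rebuilds $\|g\|^{2}$. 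Combining the two steps yields $\overline{h}^{2}/(2k)\le k-\mu_{1}$, and Remark~\ref{rem:old-cheeger} gives the claimed $h^{2}/(8k)\le k-\mu_{1}$. The specialisation to Ramanujan graphs, where $\mu_{1}\le 2\sqrt{k-1}$, is then immediate.
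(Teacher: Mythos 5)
Your proof is correct and is the standard argument due essentially to Dodziuk and Alon, which is precisely what the paper's citations point to; the paper itself states Theorem~\ref{thm:discrete-cheeger} as a quoted result and does not supply a proof, so there is nothing in the text to compare against. Both halves of your argument check out: the test function $f=(n-a)\mathbbm{1}_{A}-a\mathbbm{1}_{V\setminus A}$ gives the Rayleigh quotient exactly equal to $h(X)$, and in the hard direction the sign normalisation of the eigenfunction, the pointwise inequality $\sum_{v\sim u}(g(u)-g(v))\le(k-\mu_{1})g(u)$ on $V_{+}$, the Cauchy--Schwarz split $|g(u)^{2}-g(v)^{2}|=|g(u)-g(v)|\,(g(u)+g(v))$, the bound $\sum_{e}(g(u)+g(v))^{2}\le 2k\|g\|^{2}$ from $k$-regularity, and the level-set/Abel-summation lower bound $\sum_{e}|g(u)^{2}-g(v)^{2}|\ge\overline h(X)\|g\|^{2}$ all assemble correctly to give $\overline h^{2}/(2k)\le k-\mu_{1}$, and Remark~\ref{rem:old-cheeger} converts this to $h^{2}/(8k)\le k-\mu_{1}$ with the stated constant.
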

A very useful result in many applications is the following Expander
Mixing Lemma:
\begin{prop}
\label{prop:mixing-lemma}For $X=\left(V,E\right)$ as above and for
every two subsets $A$ and $B$ of $V$, 
\[
\left|E\left(A,B\right)-\frac{k\left|A\right|\left|B\right|}{\left|V\right|}\right|\leq\mu_{0}\left(X\right)\sqrt{\left|A\right|\left|B\right|}.
\]

\end{prop}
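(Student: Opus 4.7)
The plan is to rewrite $E(A,B)$ as an inner product and then expand the characteristic vectors in the eigenbasis of $A_X$. Let $\mathbf{1}_A,\mathbf{1}_B\in\mathbb{R}^{V}$ denote the indicator functions of $A$ and $B$. Reading off entries of $A_X$ shows that the number of ordered pairs $(u,v)\in A\times B$ with $uv\in E$ is exactly $\langle A_X\mathbf{1}_A,\mathbf{1}_B\rangle$; this is the quantity we agree to call $E(A,B)$.

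Next I would split each indicator into its component along the constant function and its orthogonal remainder: write $\mathbf{1}_A=\tfrac{|A|}{n}\mathbf{1}+f_A$ and $\mathbf{1}_B=\tfrac{|B|}{n}\mathbf{1}+f_B$, where $\mathbf{1}\in\mathbb{R}^{V}$ is the all-ones vector and $f_A,f_B\perp\mathbf{1}$. A Pythagoras computation gives $\|f_A\|^{2}=|A|-|A|^{2}/n\leq|A|$, and similarly $\|f_B\|^{2}\leq|B|$. Since $A_X$ is symmetric and $\mathbf{1}$ is the eigenvector for the eigenvalue $k$, the orthogonal complement $\mathbf{1}^{\perp}$ is $A_X$-invariant. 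Expanding the inner product and using that the cross terms $\langle A_X\mathbf{1},f_B\rangle$ and $\langle A_X f_A,\mathbf{1}\rangle$ vanish (because $A_X\mathbf{1}=k\mathbf{1}$ is proportional to $\mathbf{1}$, to which $f_A,f_B$ are orthogonal), one obtains
\begin{equation*}
\langle A_X\mathbf{1}_A,\mathbf{1}_B\rangle=\frac{k|A||B|}{n}+\langle A_X f_A,f_B\rangle.
\end{equation*}
The first term is the expected edge count, so the task reduces to bounding the error term $\langle A_X f_A,f_B\rangle$.

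Here lies the only subtle step. On the $A_X$-invariant subspace $\mathbf{1}^{\perp}$, every eigenvalue $\lambda$ of $A_X$ satisfies $\lambda\neq k$, and hence $|\lambda|\leq\mu_{0}(X)$. It is crucial to use $\mu_{0}$ and not $\mu_{1}$ or $\mu$: when $X$ is bipartite the vectors $f_A,f_B$ can have a nontrivial component in the $(-k)$-eigenspace, which one is not entitled to discard. With this observation $\|A_X f_A\|\leq\mu_{0}(X)\,\|f_A\|$, and Cauchy--Schwarz gives
\begin{equation*}
|\langle A_X f_A,f_B\rangle|\leq\mu_{0}(X)\,\|f_A\|\,\|f_B\|\leq\mu_{0}(X)\sqrt{|A||B|},
\end{equation*}
which is the claimed inequality. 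There is no genuine obstacle beyond the eigenvalue bookkeeping just noted; the entire argument is three lines of linear algebra once the correct decomposition is set up.
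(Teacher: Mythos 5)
Your proof is correct, and it is the standard argument for the Expander Mixing Lemma (the paper states the proposition without proof, citing it as well known). The decomposition $\mathbf{1}_A=\tfrac{|A|}{n}\mathbf{1}+f_A$ with $f_A\perp\mathbf{1}$, the vanishing of the cross terms because $A_X\mathbf{1}=k\mathbf{1}$, the norm computation $\|f_A\|^2=|A|-|A|^2/n\leq|A|$, and the operator-norm bound $\|A_X|_{\mathbf{1}^\perp}\|\leq\mu_0(X)$ followed by Cauchy--Schwarz are all exactly right. Your remark that one must use $\mu_0$ rather than $\mu_1$ or $\mu$ is also correct and worth emphasizing: in the bipartite case $f_A,f_B$ may have a nonzero component in the $(-k)$-eigenspace, and $\mu_0$ is precisely the largest absolute eigenvalue on $\mathbf{1}^\perp$, since connectedness guarantees $k$ has multiplicity one. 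One small point of bookkeeping: the identity $E(A,B)=\langle A_X\mathbf{1}_A,\mathbf{1}_B\rangle$ counts ordered pairs, so edges lying inside $A\cap B$ are counted twice; this is the standard convention under which the mixing lemma is an exact identity plus error term, and you state it explicitly, so there is no issue.
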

Note that $\frac{k\left|A\right|\left|B\right|}{\left|V\right|}$
is the expected number of edges between $A$ and $B$ if $X$ would
be a ``random $k$-regular graph''. So, if $\mu_{0}\left(X\right)$
is small, e.g.\ if $X$ is Ramanujan, it mimics various properties
of random graphs. This is one of the characteristics which make them
so useful.

There is no easy method to construct Ramanujan graphs. Let us better
be more precise here: There are many ways to get for a fixed $k$
finitely many $k$-regular Ramanujan graphs (see \cite[Chapter 8]{lubotzky1994discrete}),
but there is essentially only one known way to get, for a fixed $k$,
infinitely many $k$-regular Ramanujan graphs. The current state of
the art is, that for every $k\in\mathbb{N}$ of the form $k=p^{\alpha}+1$
where $p,\alpha\in\mathbb{N}$ and $p$ prime, there are infinitely
many $k$-regular Ramanujan graphs but for all other $k$'s this is
still open:

\begin{open}

Given $k$ which is not of the form $p^{\alpha}+1$, are there infinitely
many $k$-regular Ramanujan graphs?

\end{open}

We stress that this problem is open for every single $k$ like that
(e.g.\ $k=7$) and it is not known if such graphs exist, let alone
an explicit construction.

In the next subsection we will describe the Bruhat-Tits tree and present
the basic theory that will enable us in the following subsection to
get explicit constructions of Ramanujan graphs.

\subsection{\label{sub:Bruhat-Tits-trees}Bruhat-Tits trees and representation
theory of $\mathrm{PGL}_{2}$}

Let $F$ be a local field (e.g.\ $F=\mathbb{Q}_{p}$ the field of
$p$-adic numbers, or a finite extension of it, or $F=\mathbb{F}_{q}\left(\left(t\right)\right)$
the field of Laurent power series over the finite field $\mathbb{F}_{q}$)
with ring of integers $\mathcal{O}$ (e.g.\ $\mathcal{O}=\mathbb{Z}_{p}$
or $\mathcal{O}=\mathbb{F}_{q}\left[\left[t\right]\right]$), maximal
ideal $\mathfrak{m}=\pi\mathcal{O}$ where $\pi$ is a fixed uniformizer,
i.e., an element of $\mathcal{O}$ with valuation $\nu\left(\pi\right)=1$
(e.g.\ $\pi=p$ or $\pi=t$, respectively), so $k=\nicefrac{\mathcal{O}}{\mathfrak{m}}$
is a finite field of order $q$. Let $G=\mathrm{PGL}_{2}\left(F\right)$
and $K=\mathrm{PGL}_{2}\left(\mathcal{O}\right)$, a maximal compact
subgroup of $G$. The quotient space $\nicefrac{G}{K}$ is a discrete
set which can be identified as the set of vertices of the regular
tree of degree $q+1$ in the following way:

Let $V=F^{2}$ be the two dimensional vector space over $F$. An $\mathcal{O}$-submodule
$L$ of $V$ is called an $\mathcal{O}$-lattice if it is finitely
generated as an $\mathcal{O}$-module and spans $V$ over $F$. Every
such $L$ is of the form $L=\mathcal{O}\alpha+\mathcal{O}\beta$ where
$\left\{ \alpha,\beta\right\} $ is some basis of $V$ over $F$.
The \emph{standard lattice} is the one with $\left\{ \alpha,\beta\right\} =\left\{ e_{1},e_{2}\right\} $,
where $\left\{ e_{1},e_{2}\right\} $ is the standard basis of $V$.

Two $\mathcal{O}$-lattices $L_{1}$ and $L_{2}$ are said to be equivalent
if there exists $0\neq\lambda\in F$ such that $L_{2}=\lambda L_{1}$.
The group $\mathrm{GL}_{2}\left(F\right)$ acts transitively on the
set of $\mathcal{O}$-lattices and its center $Z$, the group of scalar
matrices, preserves the equivalent classes. Hence $G=\mathrm{PGL}_{2}\left(F\right)$
acts on these classes, with $K=\mathrm{PGL}_{2}\left(\mathcal{O}\right)$
fixing the equivalent class of the standard lattice $x_{0}=\left[L_{0}\right]$,
$L_{0}=\mathcal{O}e_{1}+\mathcal{O}e_{2}$. So, $\nicefrac{G}{K}$
can be identified with the set of equivalent classes of lattices.
Two classes $\left[L_{1}\right]$ and $\left[L_{2}\right]$ are said
to be \emph{adjacent} if there exists representatives $L_{1}'\in\left[L_{1}\right]$
and $L_{2}'\in\left[L_{2}\right]$ such that $L_{1}'\subseteq L_{2}'$
and $\nicefrac{L_{2}^{'}}{L_{1}'}\simeq k\left(=\nicefrac{\mathcal{O}}{\mathfrak{m}}\right)$.
This symmetric relation (since $\pi L_{2}'\subseteq L_{1}'$ and $\nicefrac{L_{1}'}{\pi L_{2}'}\simeq k$)
defines a structure of a graph.
\begin{thm}[{cf.\ \cite[p. 70]{serre1980trees}}]
\label{thm:tree-is-regular}The above graph is a $\left(q+1\right)$-regular
tree.
\end{thm}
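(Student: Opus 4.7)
The statement has three ingredients: the adjacency is a well-defined symmetric relation giving a $(q+1)$-regular graph, the graph is connected, and it is acyclic. The group $G=\mathrm{PGL}_{2}(F)$ acts transitively on the vertex set $\nicefrac{G}{K}$ and preserves the adjacency relation, since both the inclusion $L_{1}'\subseteq L_{2}'$ and the residue condition $L_{2}'/L_{1}'\simeq k$ are $G$-equivariant. Consequently, for any local property it suffices to analyze the base vertex $x_{0}=[L_{0}]$.

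For regularity, I would identify the neighbors of $x_{0}$ with the index-$q$ sublattices $L'\subsetneq L_{0}$; any neighbor admits such a representative after possibly normalizing one of the two representatives in the adjacency definition to be $L_{0}$ itself. Because $L_{0}/L'\simeq k$ is annihilated by $\pi$, one has $\mathfrak{m}L_{0}\subseteq L'\subsetneq L_{0}$, and the assignment $L'\mapsto L'/\mathfrak{m}L_{0}$ is a bijection onto the set of $k$-lines in $L_{0}/\mathfrak{m}L_{0}\simeq k^{2}$, of cardinality $|\mathbb{P}^{1}(k)|=q+1$. A short valuation argument shows that distinct such $L'$ lie in distinct equivalence classes: if $L'_{2}=\lambda L'_{1}$ with both of index $q$ in $L_{0}$, then comparing indices forces $v(\lambda)=0$, whence $L'_{1}=L'_{2}$.

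For connectedness I would invoke the elementary divisors theorem over the PID $\mathcal{O}$: given any two lattices $L,L'$ there exist a basis $\{v,w\}$ of $V$ and integers $a\leq b$ with $L=\mathcal{O}v+\mathcal{O}w$ and $L'=\mathfrak{m}^{a}v+\mathfrak{m}^{b}w$. Replacing $L'$ by its equivalent $\pi^{-a}L'$, I may assume $L'\subseteq L$ and $L/L'\simeq\mathcal{O}/\mathfrak{m}^{n}$ with $n=b-a$. Then the explicit chain $L_{i}:=\mathcal{O}v+\mathfrak{m}^{i}w$ for $0\leq i\leq n$ gives successive inclusions $L_{i+1}\subset L_{i}$ with quotient $k$, producing a path $[L]=[L_{0}]\sim[L_{1}]\sim\cdots\sim[L_{n}]=[L']$ in the graph.

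The hard part is ruling out cycles. I would prove by induction on $n$ that, for every non-backtracking walk $x_{0}=[M^{(0)}],[M^{(1)}],\ldots,[M^{(n)}]$, there exist representatives with $M^{(n)}\subsetneq L_{0}$ and $L_{0}/M^{(n)}\simeq\mathcal{O}/\mathfrak{m}^{n}$ cyclic. The base case is the regularity analysis above. For the inductive step, work in a basis $\{v_{n},w_{n}\}$ of $V$ adapted to the cyclicity, so that $L_{0}=\mathcal{O}v_{n}+\mathcal{O}w_{n}$ and $M^{(n)}=\mathcal{O}v_{n}+\mathfrak{m}^{n}w_{n}$; the $q+1$ index-$q$ sublattices of $M^{(n)}$ are then parameterized by $\mathbb{P}^{1}(k)$ via the lines in $\pi^{-1}M^{(n)}/M^{(n)}$. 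Exactly one of these lines corresponds, up to scalar, to $\mathfrak{m}M^{(n-1)}$ and yields the backtrack, while a direct elementary-divisor computation shows that every other choice $M^{(n+1)}$ has $L_{0}/M^{(n+1)}\simeq\mathcal{O}/\mathfrak{m}^{n+1}$, again cyclic. Since for $n\geq 1$ such a quotient is nontrivial, $[M^{(n)}]\neq x_{0}$; hence no closed non-backtracking walks exist, and combined with connectedness and $(q+1)$-regularity this identifies the graph with the $(q+1)$-regular tree.
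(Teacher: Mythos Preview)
The paper does not give its own proof of this theorem: it simply cites Serre's \emph{Trees} and records the one observation that the $q+1$ neighbours of $[L_{0}]$ are indexed by the codimension-one subspaces of $L_{0}/\pi L_{0}\simeq k^{2}$, i.e.\ by $\mathbb{P}^{1}(k)$. That observation is exactly your regularity step, and your outline as a whole is the standard lattice-theoretic argument one finds in Serre: elementary divisors over the PID $\mathcal{O}$ for connectedness, and the propagation of the invariant ``$L_{0}/M^{(n)}$ is cyclic of order $q^{n}$'' along non-backtracking walks for acyclicity. The key computation in the inductive step---that the unique index-$q$ sublattice of $M^{(n)}$ giving a non-cyclic quotient of $L_{0}$ is precisely $\pi M^{(n-1)}$, the backtrack---is correct, and combined with vertex-transitivity this indeed excludes all cycles.

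Two small remarks. First, a typographical slip: where you wrote ``lines in $\pi^{-1}M^{(n)}/M^{(n)}$'' you mean $M^{(n)}/\pi M^{(n)}$. Second, it is worth making explicit (you use it implicitly) that after the change of basis $v_{n+1}=v_{n}+\tilde{c}\,\pi^{n}w_{n}$, $w_{n+1}=w_{n}$, the previous lattice $M^{(n)}$ still has the standard form $\mathcal{O}v_{n+1}+\mathfrak{m}^{n}w_{n+1}$ in the new coordinates; this is what makes the identification of the backtrack uniform across the induction.
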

The $q+1$ neighbors of $\left[L_{0}\right]$ correspond to the $q+1$
subspaces of co-dimension $1$ of the two dimensional space $\nicefrac{L_{0}}{\pi L_{0}}\cong k^{2}$.
We can therefore identify them with $\mathbb{P}^{1}\left(k\right)$,
the projective line over $k$.

Let us now shift our attention for a moment to the unitary representation
theory of $G$. Let $C=C_{c}\left(K\backslash G/K\right)$ denote
the set of bi-$K$-invariant functions on $G$ with compact support.
This is an algebra with respect to convolution:
\[
f_{1}*f_{2}\left(x\right)=\int_{G}f_{1}\left(xg\right)f_{2}\left(g^{-1}\right)dg.
\]
The algebra $C$ is commutative (see \cite[Chapter 5]{lubotzky1994discrete}
and the references therein). If $\mathcal{H}$ is a Hilbert space
and $\rho:G\rightarrow U\left(\mathcal{H}\right)$ a unitary representation
of $G$, then $\rho$ induces a representation $\overline{\rho}$
of the algebra $C$ by:
\[
\overline{\rho}\left(f\right)=\int_{G}f\left(g\right)\rho\left(g\right)dg.
\]
Let $\mathcal{H}^{K}$ be the space of $K$-invariant vectors in $\mathcal{H}$.
Then $\overline{\rho}\left(f\right)\left(\mathcal{H}^{K}\right)\subseteq\mathcal{H}^{K}$
and so $\left(\mathcal{H}^{K},\overline{\rho}\right)$ is a representation
of $C$. A basic claim is that if $\rho$ is irreducible and $\mathcal{H}^{K}\neq\left\{ 0\right\} $
then $\overline{\rho}$ is irreducible, in fact, as $C$ is commutative
Schur's Lemma implies that $\dim\mathcal{H}^{K}=1$. So $\dim\mathcal{H}^{K}=0$
or $1$, in the second case we say that $\rho$ is \emph{$K$-spherical}
(or unramified\emph{ }or of class one). We will be interested only
in these representations. Such a representation $\rho$ is uniquely
determined by $\overline{\rho}$. Let us understand now what is the
algebra $C$.

Let $\overline{\delta}$ be the characteristic function of the subset
$K\left(\begin{smallmatrix}\pi & 0\\
0 & 1
\end{smallmatrix}\right)K$ of $G$. By its definition $\overline{\delta}\in C$. In fact, it
turns out that $C$ is generated as an algebra by $\overline{\delta}$
and hence every $K$-spherical irreducible subrepresentation $\left(\mathcal{H},\rho\right)$
of $G$ is determined by the action of $\overline{\delta}$ on the
one dimensional space $\mathcal{H}^{K}$, i.e.\ by the eigenvalue
of this action.

Let us note now that $C$ also acts on $L^{2}\left(\nicefrac{G}{K}\right)$
in the following way: If $f_{1}\in C$ and $f_{2}\in L^{2}\left(\nicefrac{G}{K}\right)$
we think of both as functions on $G$ and we can then look at $f_{2}*f_{1}\in L^{2}\left(\nicefrac{G}{K}\right)$
(check!)

Spelling out the meaning of that for $f_{1}=\overline{\delta}$, one
can see (the reader is strongly encouraged to work out this exercise!):
\begin{claim}
Let $f$ be a function defined on the vertices of the tree $\nicefrac{G}{K}$
and let $\delta$ be the operator $\delta:L^{2}\left(\nicefrac{G}{K}\right)\rightarrow L^{2}\left(\nicefrac{G}{K}\right)$
defined by $\delta\left(f\right)=f*\overline{\delta}$. Then for every
$x\in\nicefrac{G}{K}$ 
\[
\delta\left(f\right)\left(x\right)=\sum\limits _{y\sim x}f\left(y\right).
\]
Namely, $\delta$ is nothing more then the adjacency operator (whose
name in the classical literature is Hecke operator).
\end{claim}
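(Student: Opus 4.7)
The plan is to translate the convolution $\delta(f)=f*\overline{\delta}$ into a sum over a finite set of right cosets, and then to show that those cosets correspond bijectively to the neighbors of $x$ in the Bruhat--Tits tree. Throughout, I normalise the Haar measure on $G$ so that $\mathrm{vol}(K)=1$, write $a=\left(\begin{smallmatrix}\pi & 0\\0 & 1\end{smallmatrix}\right)$, and identify elements of $\nicefrac{G}{K}$ with lattice classes $[L]$ as in the setup.

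First I would establish the Cartan/right-coset decomposition
\[
KaK \;=\; \bigsqcup_{i=0}^{q} g_{i}K,
\]
with explicit representatives. An efficient way to do this is to observe that the lattices $L'$ satisfying $\pi L_{0}\subsetneq L'\subsetneq L_{0}$ are in bijection with the one-dimensional subspaces of $\nicefrac{L_{0}}{\pi L_{0}}\cong k^{2}$, of which there are exactly $\lvert\mathbb{P}^{1}(k)\rvert=q+1$. For each line $\ell\subset k^{2}$ choose $g_{\ell}\in\mathrm{GL}_{2}(\mathcal{O})\cdot a\cdot\mathrm{GL}_{2}(\mathcal{O})$ mapping $L_{0}$ to the preimage of $\ell$; a direct matrix computation using row reduction over $\mathcal{O}$ shows that these $q+1$ elements exhaust $KaK/K$. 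One also notes that $a^{-1}$ and $a$ are conjugate by the Weyl element $\left(\begin{smallmatrix}0 & 1\\1 & 0\end{smallmatrix}\right)\in K$, so $KaK$ is symmetric under inversion; equivalently $\overline{\delta}(g)=\overline{\delta}(g^{-1})$.

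Next I would compute the convolution directly. Since $\overline{\delta}(g^{-1})$ is the indicator of $K a^{-1}K = KaK$,
\[
(f*\overline{\delta})(x) \;=\; \int_{G} f(xg)\,\overline{\delta}(g^{-1})\,dg
\;=\; \int_{KaK} f(xg)\,dg
\;=\; \sum_{i=0}^{q} f(x g_{i}),
\]
using that $f$ is right-$K$-invariant (as an element of $L^{2}(\nicefrac{G}{K})$) and that each coset $g_{i}K$ has volume $1$.

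It remains to identify the points $xg_{i}K\in\nicefrac{G}{K}$ with the neighbors of $x$. Adjacency in the tree is $G$-equivariant, because the relation $L_{1}'\subseteq L_{2}'$ with quotient $\cong k$ is preserved by the $\mathrm{GL}_{2}(F)$-action and respects scalar equivalence. Thus the neighbors of $xK$ are the translates by $x$ of the neighbors of $x_{0}=[L_{0}]$. By construction, the classes $[g_{i}L_{0}]$ run through precisely the $q+1$ lattice classes adjacent to $[L_{0}]$. Therefore $xg_{i}K$ runs exactly over the neighbors of $x$, which gives the asserted formula $\delta(f)(x)=\sum_{y\sim x}f(y)$.

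The main technical obstacle is the explicit matching between the right cosets of $KaK$ and the $q+1$ codimension-one subspaces of $\nicefrac{L_{0}}{\pi L_{0}}\cong k^{2}$; everything else is formal manipulation of the convolution. Once this combinatorial-geometric identification is in place — essentially the content of Theorem~\ref{thm:tree-is-regular} in coordinates — the rest of the argument is a one-line bookkeeping computation with Haar measure.
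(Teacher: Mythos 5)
Your proof is correct. The paper does not supply a proof of this claim (it is explicitly left to the reader as an exercise), and your argument is the natural and standard way to carry it out: unwind the convolution into an integral over $KaK=Ka^{-1}K$, decompose the double coset into its $q+1$ right $K$-cosets (volume one each), use right-$K$-invariance of $f$ to turn the integral into a finite sum, and then identify the coset representatives with the $q+1$ neighbors of $x$ via $G$-equivariance of the adjacency relation and the transitive action of $K$ on the neighbors of the base vertex. The symmetry $KaK=Ka^{-1}K$ via the Weyl element and the normalisation $\mathrm{vol}(K)=1$ are exactly the small points that need to be made explicit, and you did. No gaps.
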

Let now $\Gamma$ be a cocompact discrete subgroup of $G=\mathrm{PGL}_{2}\left(F\right)$
(for simplicity assume also that $\Gamma$ is torsion free). Then
$\Gamma\backslash G/K$ is, on one hand a quotient of the $\left(q+1\right)$-regular
tree and, on the other hand, a quotient of the compact space $\Gamma\backslash G$.
Hence, this is nothing more than a finite $\left(q+1\right)$-regular
graph. Moreover, the discussion above shows that the spectral decomposition
of the adjacency matrix of this finite graph (and in particular its
eigenvalues) is intimately connected with the spectral decomposition
of $L^{2}\left(\Gamma\backslash G\right)$ as a unitary $G$-representation.
More precisely, in every irreducible $K$-spherical subrepresentation
$\rho$ of $L^{2}\left(\Gamma\backslash G\right)$, there is a $K$-invariant
function $f$, i.e.\ a function in $L^{2}\left(\Gamma\backslash G/K\right)$.
As explained above the one dimensional space spanned by $f$ is a
representation space $\overline{\rho}$ for $C$, which means that
$f$ is an eigenvector for the adjacency operator $\delta$ of the
finite graph $\Gamma\backslash G/K$. Moreover, every eigenvector
$f$ of $\delta$ in $L^{2}\left(\Gamma\backslash G/K\right)$ is
obtained like that (we can look at the $G$-subspace spanned by $f$,
thinking of it as a $K$-invariant function in $L^{2}\left(\Gamma\backslash G\right)$.)

The list of $K$-spherical irreducible unitary representations of
$\mathrm{PGL}_{2}\left(F\right)$ is well known (see \cite[Theorem 5.4.3]{lubotzky1994discrete}
and the references therein). There are representations of two kinds:
\begin{enumerate}
\item The tempered representations - these are the $K$-spherical irreducible
representations $\left(\mathcal{H},\rho\right)$ with the following
property: There exists $0\neq u,v\in\mathcal{H}$ such that $\phi:G\rightarrow\mathbb{C}$
defined by $\phi\left(g\right)=\left\langle \rho\left(g\right)u,v\right\rangle $
(the coefficient function of $\rho$ w.r.t.\ $u$ and $v$) is in
$L^{2+\varepsilon}\left(G\right)$ for every $\varepsilon>0$. The
$K$-spherical representations with this property are also called
in this case ``the principal series'' and they are characterized
by the property that the associated eigenvalue $\lambda$ of $\delta$
(as a generator of $C$ acting on the one dimensional space $\mathcal{H}^{K}$)
satisfies $\left|\lambda\right|\leq2\sqrt{q}$.
\item The non-tempered representations - these are the representations for
which the above $\lambda$ satisfies $2\sqrt{q}<\left|\lambda\right|\leq q+1$
.
\end{enumerate}
The above description explains why and how the representation of $G=\mathrm{PGL}_{2}\left(F\right)$
on $L^{2}\left(\Gamma\backslash G\right)$ is crucial for understanding
the combinatorics of the graph $\Gamma\backslash G/K$. In fact we
have (see \cite[Corollary 5.5.3]{lubotzky1994discrete}:
\begin{thm}
\label{thm:Ramanujan-rep}Let $\Gamma$ be a cocompact lattice in
$G=\mathrm{PGL}_{2}\left(F\right)$. Then $\Gamma\backslash G/K$
is a Ramanujan graph if and only if every irreducible $K$-spherical
$G$-subrepresentation of $L^{2}\left(\Gamma\backslash G\right)$
is tempered, with the exception of the trivial representation (which
corresponds to $\lambda=q+1$) and the possible exception of the sign
representation (the non trivial one dimensional representation $\mathrm{sg}:G\rightarrow\left\{ \pm1\right\} $)
which corresponds to $\lambda=-\left(q+1\right)$ and which appears
in $L^{2}\left(\Gamma\backslash G\right)$ iff $\Gamma\backslash G/K$
is bipartite.
\end{thm}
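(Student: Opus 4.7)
The strategy is to exploit the correspondence, already set up in the preceding discussion, between the spectrum of the adjacency operator $\delta$ on $\Gamma\backslash G/K$ and the irreducible $K$-spherical subrepresentations of $L^{2}(\Gamma\backslash G)$. Since $\Gamma$ is cocompact and torsion-free, the right regular representation on $L^{2}(\Gamma\backslash G)$ decomposes as a Hilbert-space direct sum
\[
L^{2}(\Gamma\backslash G)=\bigoplus_{i}(\mathcal{H}_{i},\rho_{i})
\]
of irreducible unitary $G$-representations with finite multiplicities. Taking $K$-invariants commutes with direct sums, so $L^{2}(\Gamma\backslash G/K)=\bigoplus_{i}\mathcal{H}_{i}^{K}$, and by commutativity of $C=C_{c}(K\backslash G/K)$ together with Schur's lemma, $\dim\mathcal{H}_{i}^{K}$ is $0$ if $\rho_{i}$ is not $K$-spherical and $1$ otherwise. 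The claim preceding the theorem identifies $\delta$ with convolution by $\overline{\delta}$, so $\delta$ preserves this decomposition and acts on each one-dimensional $\mathcal{H}_{i}^{K}$ by the scalar $\lambda_{i}$ through which $\overline{\rho}_{i}(\overline{\delta})$ acts. Consequently the spectrum of the adjacency matrix of $\Gamma\backslash G/K$ is exactly the multiset $\{\lambda_{i}\mid\rho_{i}\text{ is }K\text{-spherical}\}$.

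Next I would invoke the classification of $K$-spherical irreducible unitary representations of $\mathrm{PGL}_{2}(F)$ recalled above: up to isomorphism they split into the trivial representation ($\lambda=q+1$), the sign representation $\mathrm{sg}$ ($\lambda=-(q+1)$), the tempered principal series ($|\lambda|\leq 2\sqrt{q}$), and the complementary non-tempered series ($2\sqrt{q}<|\lambda|<q+1$). Writing $k=q+1$, the Ramanujan condition $|\lambda|\leq 2\sqrt{k-1}=2\sqrt{q}$ for every eigenvalue $\lambda\neq\pm k$ therefore translates precisely into the statement that no complementary-series constituent occurs in $L^{2}(\Gamma\backslash G)$, i.e.\ every $K$-spherical irreducible subrepresentation other than the trivial and sign ones is tempered.

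It remains to handle the two extremal eigenvalues. The trivial representation always embeds in $L^{2}(\Gamma\backslash G)$ through the constant function (as $\Gamma\backslash G$ has finite Haar measure), accounting for the eigenvalue $q+1$ that appears for every finite regular graph. For $\mathrm{sg}$, an embedded copy in $L^{2}(\Gamma\backslash G)$ must be a multiple of the function $\mathrm{sg}$ itself, and this descends to a $\Gamma$-invariant function iff $\mathrm{sg}|_{\Gamma}\equiv 1$. Since $\mathrm{sg}(g)=(-1)^{\nu(\det g)}$ is trivial on $K$ and flips on $\left(\begin{smallmatrix}\pi & 0\\ 0 & 1\end{smallmatrix}\right)$, it descends to a proper $2$-coloring of the tree $G/K$ with adjacent vertices oppositely colored; hence $\Gamma\subseteq\ker(\mathrm{sg})$ iff $\Gamma$ preserves this two-coloring iff $\Gamma\backslash G/K$ is bipartite. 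Combining the three inputs yields the theorem.

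The single substantial input is the representation-theoretic classification invoked in the second step, which rests on the Satake isomorphism and Macdonald's theory of spherical functions for $\mathrm{PGL}_{2}$ over a local field; I would cite \cite[Theorem 5.4.3]{lubotzky1994discrete} for it rather than reprove it. Everything else is a bookkeeping assembly of ingredients already established in the excerpt: the spectral decomposition on a compact quotient, the $C$-module structure on the $K$-invariants of an irreducible, and the identification of $\delta$ with the Hecke operator $\overline{\delta}$.
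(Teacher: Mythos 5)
Your proposal is correct and follows essentially the same route the paper takes: the paper sketches exactly this mechanism in the paragraphs preceding Theorem \ref{thm:Ramanujan-rep} (spectral decomposition of $L^{2}(\Gamma\backslash G)$ into irreducibles, the one-dimensionality of $\mathcal{H}_{i}^{K}$ via commutativity of the Hecke algebra, identification of $\delta$ with convolution by $\overline{\delta}$, and the tempered/non-tempered dichotomy from the classification of $K$-spherical representations), then cites \cite[Corollary 5.5.3]{lubotzky1994discrete} for the full account; the same argument is also rehearsed in more detail in the proof of the higher-dimensional analogue, Proposition \ref{prop:ramanujan-tempered}. Your handling of the two finite-dimensional exceptions (constant function for the trivial representation, $\mathrm{sg}(g)=(-1)^{\nu(\det g)}$ descending to the canonical two-coloring of the tree, so $\mathrm{sg}\hookrightarrow L^{2}(\Gamma\backslash G)$ iff $\Gamma\subseteq\ker\mathrm{sg}$ iff the quotient is bipartite) is the standard completion and matches the paper's intent.
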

Proving that $\Gamma$'s as in the last theorem indeed exist is a
highly nontrivial issue which we discuss in the next section. This
will lead to (explicit) constructions of Ramanujan graphs.
\begin{rem}
In case $\Gamma$ is a non-uniform lattice in $G=\mathrm{PGL}_{2}\left(F\right)$
(which exists only if $\mathrm{char}\left(F\right)>0$) one can develop
also a theory of Ramanujan diagrams (cf.\ \cite{morgenstern1994ramanujan})
which is also of interest even for computer science (see \cite{morgenstern1995natural}).
\end{rem}

\subsection{\label{sub:Explicit-constructions}Explicit constructions}

In this section we will quote the deep results which imply that various
graphs are Ramanujan and then we will show how to use them to get
explicit constructions of such graphs.

Let $k$ be a global field, i.e.\ $k$ is a finite extension of $\mathbb{Q}$
or of $\mathbb{F}_{p}\left(t\right)$. Let $\mathcal{O}$ be the ring
of integers of $k$, $S$ a finite set of valuations of $k$ (containing
all the archimedean ones if $\mathrm{char}\left(k\right)=0$) and
$\mathcal{O}_{S}$ the ring of $S$-integers ($=\left\{ x\in k\,\middle|\,\nu\left(x\right)\geq0,\forall\nu\notin S\right\} $).
Let $\utilde{G}$ be a $k$-algebraic semisimple group with a fixed
embedding $\utilde{G}\hookrightarrow\mathrm{GL}_{m}$. A general result
asserts that
\[
\Gamma=\utilde{G}\left(\mathcal{O}_{S}\right):=\utilde{G}\left(k\right)\bigcap\mathrm{GL}_{m}\left(\mathcal{O}_{S}\right)
\]
is a lattice (= discrete subgroup of finite covolume) in $\prod\limits _{\nu\in S}\utilde{G}\left(k_{\nu}\right)$
where $k_{\nu}$ is the completion of $k$ w.r.t.\ the valuation
$\nu$. In some cases (few of these will be described below) $\utilde{G}\left(k_{\nu}\right)$
is compact for every $\nu\in S$ except of one $\nu_{0}\in S$. In
such a case the projection of $\Gamma$ to $\utilde{G}\left(k_{\nu_{0}}\right)$,
which is also denoted by $\Gamma$, is called \emph{an arithmetic
lattice in $\utilde{G}\left(k_{\nu_{0}}\right)$}.\textbf{ }The arithmetic
lattice $\Gamma$ comes with a system of congruence subgroups defined
for every $0\neq I\triangleleft\mathcal{O}_{S}$ as:
\[
\Gamma\left(I\right)=\Gamma\bigcap\ker\left(\mathrm{GL}_{m}\left(\mathcal{O}_{S}\right)\rightarrow\mathrm{GL}_{m}\left(\nicefrac{\mathcal{O}_{S}}{I}\right)\right).
\]
If $\utilde{G}\left(k_{\nu_{0}}\right)\simeq\mathrm{PGL}_{2}\left(F\right)$
(or more generally $\mathrm{PGL}_{d}\left(F\right)$ - see Chapter
\ref{sec:Ramanujan-Complexes}) where $F$ is a local field as in
§\ref{sub:Bruhat-Tits-trees}, we get the arithmetic groups we are
interested in. We can now state:
\begin{thm}
\label{thm:cong-temp}Let $\Gamma\left(I\right)\triangleleft\Gamma$
be a congruence subgroup of an arithmetic lattice $\Gamma$ of $G=\mathrm{PGL}_{2}\left(F\right)$
as above. Then every infinite dimensional $K$-spherical irreducible
subrepresentation of $L^{2}\left(\Gamma\left(I\right)\backslash G\right)$
is tempered.
\end{thm}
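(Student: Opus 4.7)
The plan is to reduce the statement to the Ramanujan--Petersson conjecture for $\mathrm{GL}_{2}$ via the Jacquet--Langlands correspondence. The first observation is that the hypotheses on $\utilde{G}$ pin it down: since $\utilde{G}\left(k_{\nu}\right)$ is compact for every $\nu\in S\setminus\left\{ \nu_{0}\right\} $ while $\utilde{G}\left(k_{\nu_{0}}\right)\cong\mathrm{PGL}_{2}\left(F\right)$, the $k$-group $\utilde{G}$ must be $D^{\times}/Z$ for a quaternion algebra $D$ over $k$ that splits at $\nu_{0}$ and is ramified at every other place of $S$ (and at every archimedean place of $k$ when $\mathrm{char}\left(k\right)=0$).

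Next I would rewrite $L^{2}\left(\Gamma\left(I\right)\backslash G\right)$ adelically. Using strong approximation for the simply connected cover $\mathrm{SL}_{1}\left(D\right)$ applied with the non-compact factor $\utilde{G}\left(k_{\nu_{0}}\right)$, the level-$I$ congruence data together with the compact factors at $\nu\in S\setminus\left\{ \nu_{0}\right\} $ assemble into a compact-open subgroup $K^{\nu_{0}}\subset\utilde{G}\left(\mathbb{A}_{k}^{\nu_{0}}\right)$ with
\[
\Gamma\left(I\right)\backslash G\;\cong\;\utilde{G}\left(k\right)\backslash\utilde{G}\left(\mathbb{A}_{k}\right)/K^{\nu_{0}}.
\]
Consequently, an irreducible $K$-spherical subrepresentation $\pi_{\nu_{0}}$ of $L^{2}\left(\Gamma\left(I\right)\backslash G\right)$ appears as the $\nu_{0}$-component of an irreducible constituent $\pi={\otimes}_{\nu}'\,\pi_{\nu}$ of $L^{2}\left(\utilde{G}\left(k\right)\backslash\utilde{G}\left(\mathbb{A}_{k}\right)\right)$ which is unramified at $\nu_{0}$ and $K^{\nu_{0}}$-fixed elsewhere. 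The infinite-dimensionality hypothesis on $\pi_{\nu_{0}}$ rules out the one-dimensional automorphic characters of $\utilde{G}\left(\mathbb{A}_{k}\right)$ (whose local components are one-dimensional at every place), so $\pi$ lies in the cuspidal spectrum.

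Now I would invoke the Jacquet--Langlands correspondence: every cuspidal automorphic representation $\pi$ of $D^{\times}\left(\mathbb{A}_{k}\right)/Z$ corresponds to a cuspidal automorphic representation $\pi'={\otimes}_{\nu}'\,\pi'_{\nu}$ of $\mathrm{PGL}_{2}\left(\mathbb{A}_{k}\right)$ with $\pi'_{\nu}\cong\pi_{\nu}$ at every place where $D$ is split --- in particular $\pi'_{\nu_{0}}\cong\pi_{\nu_{0}}$. Finally, the Ramanujan--Petersson theorem for $\mathrm{PGL}_{2}$ --- due to Deligne in the number field case (obtained from his proof of the Weil conjectures applied to the \'etale cohomology of Kuga--Sato varieties over modular curves) and to Drinfeld in the function field case --- asserts that every local component of a cuspidal automorphic representation of $\mathrm{PGL}_{2}\left(\mathbb{A}_{k}\right)$ is tempered. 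Applied at $\nu_{0}$ this yields the temperedness of $\pi'_{\nu_{0}}\cong\pi_{\nu_{0}}$.

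The main obstacle is that both key inputs --- Jacquet--Langlands and Ramanujan--Petersson for $\mathrm{GL}_{2}$ --- are deep theorems from automorphic forms and arithmetic geometry, and the entire content of the theorem is concentrated in them; the surrounding steps (identifying $\utilde{G}$, strong approximation, separating cusp forms from one-dimensional characters) are essentially bookkeeping. It is worth noting that the infinite-dimensionality hypothesis cannot be dropped: the trivial and sign representations of $G$ extend to one-dimensional adelic characters of $\utilde{G}\left(\mathbb{A}_{k}\right)$ which do not transfer to cusp forms on $\mathrm{PGL}_{2}$, and these are precisely what produce the ``trivial'' Hecke eigenvalues $\pm\left(q+1\right)$ on the graph side via Theorem~\ref{thm:Ramanujan-rep}.
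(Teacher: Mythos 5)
Your proposal follows exactly the route the paper sketches: express the spherical local component as a factor of an automorphic representation of $D^{\times}/Z$, transfer to $\mathrm{PGL}_{2}$ via Jacquet--Langlands, and invoke the Ramanujan--Petersson bound (Deligne/Drinfeld) at the split place $\nu_{0}$; the paper gives no more detail than you do, so this is the same proof. One small point of emphasis: the paper notes that in characteristic zero the relevant input is Eichler's weight-two case (via the Eichler--Shimura isomorphism and the Riemann hypothesis for the modular curve itself) rather than Deligne's general-weight theorem and Kuga--Sato varieties, though both are of course valid references.
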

The only possible finite dimensional $K$-spherical representations
are the trivial one and the $\mathrm{sg}$ representation. From Theorem
\ref{thm:Ramanujan-rep} we can now deduce:
\begin{cor}
The graph $\Gamma\left(I\right)\backslash G/K$ is Ramanujan.
\end{cor}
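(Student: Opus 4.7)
The plan is to deduce this directly by combining the just-stated Theorem \ref{thm:cong-temp} with the spectral criterion in Theorem \ref{thm:Ramanujan-rep}. First I would note that since $\Gamma(I)$ has finite index in the cocompact arithmetic lattice $\Gamma$, it is itself a cocompact lattice in $G=\mathrm{PGL}_2(F)$, so Theorem \ref{thm:Ramanujan-rep} applies to the finite graph $\Gamma(I)\backslash G/K$. Thus it suffices to check that every irreducible $K$-spherical $G$-subrepresentation of $L^2(\Gamma(I)\backslash G)$ is tempered, except possibly for the trivial representation and the sign representation $\mathrm{sg}$.

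Next I would decompose the unitary $G$-representation $L^2(\Gamma(I)\backslash G)$ into irreducibles (this is a direct sum decomposition since $\Gamma(I)\backslash G$ is compact) and split the $K$-spherical irreducible constituents into two classes: finite dimensional and infinite dimensional. For the infinite dimensional ones, Theorem \ref{thm:cong-temp} says exactly that they are tempered, i.e.\ the eigenvalue $\lambda$ of the Hecke/adjacency operator $\delta$ on the one dimensional space $\mathcal{H}^K$ satisfies $|\lambda|\leq 2\sqrt{q}=2\sqrt{k-1}$ (recall $k=q+1$ here).

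For the finite dimensional $K$-spherical irreducible subrepresentations, I would invoke the fact already stated in the text just before the corollary: the only $K$-spherical finite dimensional irreducible unitary representations of $\mathrm{PGL}_2(F)$ are the trivial representation (corresponding to $\lambda=q+1$) and $\mathrm{sg}$ (corresponding to $\lambda=-(q+1)$). These are precisely the two exceptions allowed by Theorem \ref{thm:Ramanujan-rep} (the second occurring only if the graph is bipartite). Putting these observations together gives that every non-trivial, non-sign $K$-spherical constituent contributes an eigenvalue bounded by $2\sqrt{q}$, so the spectral condition defining a Ramanujan graph is satisfied and the corollary follows.

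The real content of the corollary is entirely in Theorem \ref{thm:cong-temp}, whose proof is the genuinely deep input (relying on Deligne's proof of the Ramanujan--Petersson conjecture in characteristic zero and on Drinfeld's/Lafforgue's work in positive characteristic). The remaining obstacle, within the framework of this paper, is purely bookkeeping: matching up the local representation-theoretic dictionary between eigenvalues of $\delta$ acting on $\mathcal{H}^K$ and the classification of $K$-spherical unitary representations of $G$, which has already been carried out in \S\ref{sub:Bruhat-Tits-trees}. Once that dictionary is granted, the deduction of the corollary is immediate.
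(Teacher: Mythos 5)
Your argument is exactly the paper's: combine Theorem \ref{thm:cong-temp} (temperedness of the infinite dimensional $K$-spherical constituents) with the observation that the only finite dimensional $K$-spherical irreducibles of $\mathrm{PGL}_2(F)$ are the trivial and sign representations, and then invoke the spectral criterion of Theorem \ref{thm:Ramanujan-rep}. The proposal is correct and takes the same route as the paper.
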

Theorem \ref{thm:cong-temp} is a very deep result whose proof is
a corollary of various works by some of the greatest mathematicians
of the $20^{\mathrm{th}}$ century. It is based in particular on the
solution of the so called ``Peterson-Ramanujan conjecture''. (In
characteristic $0$, in two steps: by Eichler for weight two modular
forms which is the relevant case for us, and by Deligne in general.
In positive characteristic by Drinfeld). Then one needs to combine
it with the Jacquet\textendash{}Langlands correspondence. The reader
is referred to \cite{lubotzky1994discrete} for more and in particular
to the Appendix there by J.\ Rogawski which gives the general picture.

The last result give explicit graphs in the mathematical sense of
explicit, but it also paves the way for an explicit construction,
in the computer science sense, of Ramanujan graphs. We will present
the ones constructed in \cite{Lubotzky1988}.

When $G=\mathrm{PGL}_{2}\left(F\right)$, all the arithmetic lattices
in $G$ are obtained via quaternion algebras. Namely, let $D$ be
a quaternion algebra defined over $k$ and $\utilde{G}=\nicefrac{D^{\times}}{Z}$,
i.e.\ the invertible quaternions modulo the central ones. If $D$
splits over $\nu_{0}\in S$ (i.e.\ $D\otimes k_{\nu_{0}}\simeq M_{2}\left(k_{\nu_{0}}\right)$)
while it is ramified over all $\nu\in S\backslash\left\{ \nu_{0}\right\} $
(i.e. $D\otimes k_{\nu}$ is a division algebra in which case $\nicefrac{\left(D\otimes k_{\nu}\right)^{\times}}{Z\left(D\otimes k_{\nu}\right)}$
is a compact group) then $\utilde{G}\left(\mathcal{O}_{S}\right)$
gives rise to an arithmetic lattice in $\utilde{G}\left(k_{\nu_{0}}\right)=\mathrm{PGL}_{2}\left(k_{\nu_{0}}\right)$.
Such lattices (and their congruence subgroups) can be used for the
construction of arithmetic lattices.

Let us take a very concrete example: Let $D$ be the classical Hamilton
quaternion algebra; so $D$ is spanned over $\mathbb{Q}$ by $1,i,j$
and $k$ with $i^{2}=j^{2}=k^{2}=-1$ and $ij=-ji=k$. It is well
known that it is ramified over $\mathbb{R}$ and over $\mathbb{Q}_{2}$
while splits over $\mathbb{Q}_{p}$ for every odd prime $p$, and
that $\utilde{G}\left(\mathbb{R}\right)=\nicefrac{\mathbb{H}^{\times}}{\mathbb{R}^{\times}}\simeq\mathrm{SO}\left(3\right)$
while $\utilde{G}\left(\mathbb{Q}_{p}\right)=\nicefrac{M_{2}\left(\mathbb{Q}_{p}\right)^{\times}}{\mathbb{Q}_{p}^{\times}}\simeq\mathrm{PGL}_{2}\left(\mathbb{Q}_{p}\right)$.
Fix such a prime $p$ and set $S=\left\{ \nu_{p},\nu_{\infty}\right\} $.
Then $\mathcal{O}_{S}=\mathbb{Z}\left[\frac{1}{p}\right]=\left\{ \frac{a}{p^{n}}\,\middle|\, a\in\mathbb{Z},n\in\mathbb{N}\right\} $,
and as explained above, $D\left(\mathbb{Z}\left[\frac{1}{p}\right]\right)$
is a discrete subring of $D\left(\mathbb{R}\right)\times D\left(\mathbb{Q}_{p}\right)$,
while 
\[
\Gamma=\nicefrac{D\left(\mathbb{Z}\left[\frac{1}{p}\right]\right)^{\times}}{Z}\hookrightarrow\mathrm{SO}\left(3\right)\times\mathrm{PGL}_{2}\left(\mathbb{Q}_{p}\right)
\]
is a cocompact lattice. 

Moreover, Jacobi's classical theorem tells us that there are $8\left(p+1\right)$
solutions to the equation: $x_{0}^{2}+x_{1}^{2}+x_{2}^{2}+x_{3}^{2}=p$
with $\left(x_{0},x_{1},x_{2},x_{3}\right)\in\mathbb{Z}^{4}$. Assume
now $p\equiv1\left(\mathrm{mod}\:4\right)$. In this case three of
the $x_{i}$ are even and one is odd. If we agree to take those with
$x_{0}$ odd and positive we have a set $\Sigma$ of $p+1$ solutions
which come in pairs $\alpha_{1},\overline{\alpha_{1}},\ldots,\alpha_{s},\overline{\alpha_{s}}$
where $s=\frac{p+1}{2}$ and where we consider $\alpha$ as an integral
quaternion $\alpha=x_{0}+x_{1}i+x_{2}j+x_{3}k$, $\overline{\alpha}=x_{0}-x_{1}i-x_{2}j-x_{3}k$
so $\left\Vert \alpha_{i}\right\Vert =\left\Vert \overline{\alpha_{i}}\right\Vert =p$.
These $p+1$ elements give $p+1$ elements in $\utilde{G}\left(\mathbb{Q}_{p}\right)$.
Moreover, each $\alpha_{i}$ (and $\overline{\alpha_{i}}$) when considered
as an element of $\mathrm{PGL}_{2}\left(\mathbb{Q}_{p}\right)$ takes
the standard $\mathbb{Z}_{p}$-lattice in $\mathbb{Q}_{p}\times\mathbb{Q}_{p}$
(see §\ref{sub:Bruhat-Tits-trees}) to an immediate neighbor in the
tree and $\overline{\alpha_{i}}=\alpha_{i}^{-1}$. One can deduce
(see \cite[Corollary 2.1.11]{lubotzky1994discrete}) that the group
$\Lambda=\left\langle \alpha_{1},\overline{\alpha_{1}},\ldots,\alpha_{s},\overline{\alpha_{s}}\right\rangle $
is a free group on $s=\frac{p+1}{2}$ generators acting simply transitive
on the Bruhat-Tits $\left(q+1\right)$-regular tree $T$. One can
therefore identify this tree with the Cayley graph of $\Lambda$ with
respect to $\Sigma=\left\{ \alpha_{i}\,\middle|\, i=1,\ldots,s\right\} $.
As $\Lambda$ is also a lattice in $\mathrm{PGL}_{2}\left(\mathbb{Q}_{p}\right)$,
it is of finite index in $\Gamma$. One can check (using ``strong
approximation'' or directly) that if $q$ is another prime with $q\equiv1\left(\mathrm{mod}\:4\right)$
then $\Gamma\left(2q\right)\backslash T=\mathrm{Cay}\left(\nicefrac{\Lambda}{\Lambda\cap\Gamma\left(2q\right)};\Sigma\right)$.

Spelling out the meaning of this, one gets the following explicit
construction of Ramanujan graphs, which are usually referred to as
the LPS-graphs.
\begin{thm}[{\cite{Lubotzky1988}, see \cite[Theorem 7.4.3]{lubotzky1994discrete}}]
 Let $p$ and $q$ be different prime numbers with $p\equiv q\equiv1\left(\mathrm{mod}\:4\right)$.
Fix $\varepsilon\in\mathbb{F}_{q}$ satisfying $\varepsilon^{2}=-1$.
For each $\alpha_{i}=\left(x_{0},x_{1},x_{2},x_{3}\right)$, $i=1,\ldots,s$
in the set $\Sigma$ above, take the matrix 
\[
\widetilde{\alpha_{i}}=\left(\begin{matrix}x_{0}+\varepsilon x_{1} & x_{2}+\varepsilon x_{3}\\
-x_{2}+\varepsilon x_{3} & x_{0}-\varepsilon x_{1}
\end{matrix}\right)\in\mathrm{PGL}_{2}\left(\mathbb{F}_{q}\right)
\]
and $\widetilde{\Sigma}=\left\{ \widetilde{\alpha_{i}}\,\middle|\, i=1,\ldots,s\right\} $.
Let $H$ be the subgroup of $\mathrm{PGL}_{2}\left(\mathbb{F}_{q}\right)$
generated by $\widetilde{\Sigma}$ and $X^{p,q}=\mathrm{Cay}\left(H;\widetilde{\Sigma}\right)$,
the Cayley graph of $H$ with respect to $\widetilde{\Sigma}$. Then:
\begin{enumerate}
\item $X^{p,q}$ is a $\left(p+1\right)$-regular Ramanujan graph.
\item If $\left(\frac{p}{q}\right)=-1$, i.e.\ $p$ is not a quadratic
residue modulo $q$ then $H=\mathrm{PGL}_{2}\left(\mathbb{F}_{q}\right)$
and $X^{p,q}$ is a bi-partite graph, while if $\left(\frac{p}{q}\right)=1$,
$H=\mathrm{PSL}_{2}\left(\mathbb{F}_{q}\right)$ and $X^{p,q}$ is
not.
\end{enumerate}
\end{thm}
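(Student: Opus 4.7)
The plan is to identify the Cayley graph $X^{p,q}$ with a congruence quotient $\Gamma(2q)\backslash T$ of the Bruhat--Tits tree of $\mathrm{PGL}_{2}(\mathbb{Q}_{p})$ and then invoke the corollary to Theorem \ref{thm:cong-temp} for part (1); the bipartiteness dichotomy in part (2) will emerge from computing the image of $\Lambda$ in $\mathrm{PGL}_{2}(\mathbb{F}_{q})$. To begin, I would make explicit the splitting of $D$ at $q$: since $q \equiv 1 \pmod 4$ is odd and coprime to the discriminant of $D$, the algebra $D \otimes \mathbb{F}_{q}$ is isomorphic to $M_{2}(\mathbb{F}_{q})$. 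Using $\varepsilon$ with $\varepsilon^{2} = -1$ in $\mathbb{F}_{q}$, one checks that the assignment $i \mapsto \left(\begin{smallmatrix}\varepsilon & 0\\ 0 & -\varepsilon\end{smallmatrix}\right)$, $j \mapsto \left(\begin{smallmatrix}0 & 1\\ -1 & 0\end{smallmatrix}\right)$, $k \mapsto \left(\begin{smallmatrix}0 & \varepsilon\\ \varepsilon & 0\end{smallmatrix}\right)$ extends to a ring isomorphism under which the integral quaternion $\alpha = x_{0} + x_{1} i + x_{2} j + x_{3} k$ maps to the matrix $\widetilde{\alpha}$ of the statement. Combined with reduction modulo $q$ and projectivization, this yields a homomorphism $\pi : \Lambda \to \mathrm{PGL}_{2}(\mathbb{F}_{q})$ with image $H$, whose kernel one identifies with $\Lambda \cap \Gamma(2q)$ (the factor of $2$ reflecting the ramification of $D$ at the prime $2$).

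Next, since $\Lambda$ acts simply transitively on $T$, the quotient $H = \Lambda/(\Lambda \cap \Gamma(2q))$ acts simply transitively on $\Gamma(2q)\backslash T$, so
\[
X^{p,q} \;=\; \mathrm{Cay}(H; \widetilde{\Sigma}) \;\simeq\; \Gamma(2q)\backslash T \;\simeq\; \Gamma(2q)\backslash G/K,
\]
with the $p+1$ elements of $\widetilde{\Sigma}$ (together with their inverses) corresponding to the $p+1$ neighbors of a base vertex in $T$. The corollary to Theorem \ref{thm:cong-temp} then implies that $X^{p,q}$ is a Ramanujan $(p+1)$-regular graph, proving part (1).

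For part (2), I would analyze $H$ via determinants. Every $\widetilde{\alpha_{i}}$ satisfies $\det \widetilde{\alpha_{i}} = x_{0}^{2}+x_{1}^{2}+x_{2}^{2}+x_{3}^{2} \equiv p \pmod q$, so the class of $\det \widetilde{\alpha_{i}}$ in $\mathbb{F}_{q}^{\times}/(\mathbb{F}_{q}^{\times})^{2}$ equals $[p]$. If $\left(\frac{p}{q}\right) = 1$, all generators lie in $\mathrm{PSL}_{2}(\mathbb{F}_{q})$, and strong approximation for the simply connected cover $\mathrm{SL}_{1}(D)$ of $\utilde{G}$ gives $H = \mathrm{PSL}_{2}(\mathbb{F}_{q})$; since this group is simple for $q \geq 5$, it admits no nontrivial homomorphism to $\{\pm 1\}$ and so $X^{p,q}$ cannot be bipartite. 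If $\left(\frac{p}{q}\right) = -1$, the generators lie outside $\mathrm{PSL}_{2}(\mathbb{F}_{q})$, forcing $H = \mathrm{PGL}_{2}(\mathbb{F}_{q})$ by the same approximation input; then the determinant map $\mathrm{PGL}_{2}(\mathbb{F}_{q}) \to \mathbb{F}_{q}^{\times}/(\mathbb{F}_{q}^{\times})^{2} \simeq \{\pm 1\}$ sends every generator to $-1$, producing the required bipartition.

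The main obstacle will be the image computation: showing that $H$ is exactly $\mathrm{PSL}_{2}(\mathbb{F}_{q})$ or $\mathrm{PGL}_{2}(\mathbb{F}_{q})$ according to the case requires strong approximation for the $S$-arithmetic group $\utilde{G}(\mathbb{Z}[\tfrac{1}{p}])$, a nontrivial input but standard in this setting. Granting that and Theorem \ref{thm:cong-temp}, the remainder of the argument is the bookkeeping described above.
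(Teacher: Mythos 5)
Your proposal is correct and follows essentially the same route as the paper: identify $X^{p,q}$ with the congruence quotient $\Gamma(2q)\backslash T = \mathrm{Cay}\left(\Lambda/(\Lambda\cap\Gamma(2q));\Sigma\right)$, invoke the Ramanujan property of $\Gamma(2q)\backslash G/K$ from Theorem~\ref{thm:cong-temp} and its corollary, and analyze the image $H$ via determinants and strong approximation for $\mathrm{SL}_{1}(D)$. Your explicit ring homomorphism $D\otimes\mathbb{F}_{q}\to M_{2}(\mathbb{F}_{q})$, the determinant identity $\det\widetilde{\alpha_{i}}\equiv p\pmod q$, and the use of simplicity of $\mathrm{PSL}_{2}(\mathbb{F}_{q})$ to rule out bipartiteness all match the intended argument (the paper itself merely gestures at this by saying the theorem is obtained by ``spelling out'' the identification). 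The one remark worth flagging is minor: the role of the factor $2$ in $\Gamma(2q)$ is not really about ramification of $D$ at $2$; it is there to normalize the reduction map and to guarantee that $\Lambda\cap\Gamma(2q)$ is torsion-free and has the intended index, but this detail does not affect the logical structure of your argument.
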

The main motivation for the construction of Ramanujan graphs has been
expanders, but the LPS graphs turned out to have various other remarkable
properties like high girth and high chromatic numbers (simultaneously!).
See (\cite{Lubotzky1988,lubotzky1994discrete,Sarnak1990,Valette1997})
for more.

In \cite{morgenstern1994existence}, Morgenstern constructed, for
every prime power $q$, infinitely many $\left(q+1\right)$-regular
Ramanujan graphs. This time by finding an arithmetic lattice in $\mathrm{PGL}_{2}\left(\mathbb{F}_{q}\left(\left(t\right)\right)\right)$
acting simply transitive on the Bruhat-Tits tree. Another such a construction
is given (somewhat hidden) in \cite{Lubotzky2005a} as a special case
of Ramanujan complexes (to be discussed in the next chapter). These
last mentioned Ramanujan graphs are also edge transitive and not merely
vertex transitive (see \cite{kaufman2011edge}).

\section{\label{sec:Ramanujan-Complexes}Ramanujan complexes}

This Chapter is devoted to the high-dimensional version of Ramanujan
graphs, the so called Ramanujan complexes. These are $\left(d-1\right)$-dimensional
simplicial complexes which are obtained as quotients of the Bruhat-Tits
building $\mathcal{B}_{d}$ associated with $\mathrm{PGL}_{d}\left(F\right)$,
$F$ a local field, just like the Ramanujan graphs were obtained as
quotients of the Bruhat-Tits tree of $\mathrm{PGL}_{2}\left(F\right)$.
What enables this, is the work of Lafforgue \cite{lafforgue2002chtoucas}
which extended to general $d$, the ``Ramanujan conjecture'' for
$\mathrm{GL}_{d}$ in positive characteristic, proved first by Drinfeld
\cite{drinfel1988proof} for $d=2$ (the work of Drinfeld was the
basis for the Ramanujan graph constructed by Morgenstern \cite{morgenstern1994existence}).
We will start in §\ref{sub:Bruhat-Tits-buildings} with the basic
definitions and results about buildings and will present the associated
Hecke operators, generalizing the Hecke operator (=adjacency matrix)
which appeared in Chapter 1. We will present the analogue of Alon-Boppana
Theorem and define Ramanujan complexes. In §\ref{sub:Representation-theory-of-PGLd}
we will survey shortly the representation theory of $\mathrm{PGL}_{d}\left(F\right)$
and just as in Theorem \ref{thm:Ramanujan-rep}, we will show that
representation theory is relevant for the combinatorics of $\Gamma\backslash\mathcal{B}_{d}$.
Then in §\ref{sub:Explicit-construction-complexes}, we will present
explicit constructions of Ramanujan complexes.

We will follow mainly \cite{Lubotzky2005a} and \cite{Lubotzky2005b},
where the reader can find precise references for the results mentioned
here. The reader is also referred to \cite{ballantine2000ramanujan,cartwright2003ramanujan,li2004ramanujan,sarveniazi2007explicit}
for related material.

\subsection{\label{sub:Bruhat-Tits-buildings}Bruhat-Tits buildings and Hecke
operators}

Let $K$ be any field. The spherical complex $\mathbb{P}^{d-1}\!\left(K\right)$
associated with $K^{d}$ is the simplicial complex whose vertices
are all the non-trivial (i.e.\ not $\left\{ 0\right\} $ and not
$K^{d}$) subspaces of $K^{d}$. Two subspaces $W_{1}$ and $W_{2}$
are on the same $1$-edge if either $W_{1}\subseteq W_{2}$ or $W_{2}\subseteq W_{1}$,
and $\left\{ W_{0},\ldots,W_{r}\right\} $ is an $r$-cell if every
pair $W_{i},W_{j}$ form an edge (i.e.\ $\mathbb{P}^{d-1}\!\left(K\right)$
is a ``clique complex''). It can be shown that this happens iff
after some reordering $W_{0}\subseteq W_{1}\subseteq\ldots\subseteq W_{r}$.
When $K=\mathbb{F}_{q}$, a finite field of order $q$, $\mathbb{P}^{1}\left(\mathbb{F}_{q}\right)$
is just a set of $q+1$ points, which can be identified with the projective
line over $\mathbb{F}_{q}$. For $d=3$, $\mathbb{P}^{2}\left(K\right)$
is the $\left(q+1\right)$-regular graph with $2\left(q^{2}+q+1\right)$
vertices of ``points'' versus ``lines'' of the projective plane.
In general, $\mathbb{P}^{d-1}\!\left(K\right)$ is a $\left(d-2\right)$-dimensional
simplicial complex.

We now describe $\mathcal{B}=\mathcal{B}_{d}\left(F\right)$, the
affine Bruhat-Tits building of type $\widetilde{A}_{d-1}$ associated
with $F$. Here $F$ is a local field with $\mathcal{O}$, $\pi$
and $\mathfrak{m}$ as in §\ref{sub:Bruhat-Tits-trees} and $\nicefrac{\mathcal{O}}{\mathfrak{m}}=\mathbb{F}_{q}$.
An $\mathcal{O}$-lattice in $V=F^{d}$ is a finitely generated $\mathcal{O}$-submodule
$L$ of $V$ such that $L$ contains an $F$-basis of $V$. Two lattices
$L_{1}$ and $L_{2}$ are equivalent if $L_{1}=\lambda L_{2}$ for
some $0\neq\lambda\in F$. The vertices of $\mathcal{B}$ are the
equivalence classes of $\mathcal{O}$-lattices in $V$, and two distinct
equivalent classes $\left[L_{1}\right]$ and $\left[L_{2}\right]$
are adjacent in $\mathcal{B}$ if there exist representatives $L_{1}'\in\left[L_{1}\right]$
and $L_{2}'\in\left[L_{2}\right]$ s.t.\ $\pi L_{1}'\subseteq L_{2}'\subseteq L_{1}'$.
The $r$-simplices of $\mathcal{B}$ ($r\geq2$) are the subsets $\left\{ \left[L_{0}\right],\ldots,\left[L_{r}\right]\right\} $
such that all pairs $\left[L_{i}\right]$ and $\left[L_{j}\right]$
are adjacent. It can be shown that $\left\{ \left[L_{0}\right],\ldots,\left[L_{r}\right]\right\} $
forms a simplex if and only if there exist representatives $L_{i}'\in\left[L_{i}\right]$
such that after reordering the indices, $\pi L_{r}'\subseteq L_{0}'\subseteq\ldots\subseteq L_{r}'$.
The complex $\mathcal{B}$ is therefore of dimension $d-1=\mathrm{rank}_{F}\left(\mathrm{PGL}_{d}\left(F\right)\right)$.
This is a special case of the Bruhat-Tits building associated with
a reductive group over $F$. The next theorem is also a special case
which generalizes Theorem \ref{thm:tree-is-regular}:
\begin{thm}
The complex $\mathcal{B}_{d}\left(F\right)$ is contractible. The
link of each vertex is isomorphic to $\mathbb{P}^{d-1}\left(\mathbb{F}_{q}\right)$.
\end{thm}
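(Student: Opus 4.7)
The plan is to handle the two assertions separately. The link description is essentially an algebraic bookkeeping exercise with $\mathcal{O}$-lattices, while contractibility needs genuine geometric input from the apartment structure of the building.

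For the link of the standard vertex $x_0=[L_0]$ with $L_0=\mathcal{O}^d$, any neighbor $[L']$ admits, by the adjacency criterion, a unique representative with $\pi L_0 \subsetneq L' \subsetneq L_0$ (the two extremes $L_0$ and $\pi L_0$ both lie in the class of $x_0$ itself, and a scaling $\pi^n L'$ of an internal representative leaves the interval only when $n=0$). Reduction modulo $\pi L_0$ gives a bijection between such $L'$ and the proper non-trivial $\mathbb{F}_q$-subspaces of $L_0/\pi L_0 \cong \mathbb{F}_q^d$, matching the vertex sets of the link and of $\mathbb{P}^{d-1}(\mathbb{F}_q)$. For higher simplices, the criterion that $\{[L_0],[L_1'],\ldots,[L_r']\}$ be a simplex is equivalent, after reordering, to the representatives forming a chain $\pi L_0 \subseteq L_{i_1}'\subseteq \ldots \subseteq L_{i_r}'\subseteq L_0$; the same reduction sends such chains bijectively to flags of subspaces of $\mathbb{F}_q^d$, which are exactly the simplices of the clique complex $\mathbb{P}^{d-1}(\mathbb{F}_q)$. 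Naturality of the bijection under $\mathrm{GL}_d(\mathcal{O})$ gives the claimed simplicial isomorphism.

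For contractibility, I would work through the apartment structure. An \emph{apartment} attached to an $F$-frame $\{v_1,\ldots,v_d\}$ of $V$ consists of the lattice classes $[\sum_i \pi^{a_i}\mathcal{O}v_i]$ with $(a_1,\ldots,a_d)\in\mathbb{Z}^d$ taken modulo simultaneous translation of the exponents; this is the Coxeter complex of the affine Weyl group of type $\widetilde{A}_{d-1}$, i.e., a standard triangulation of $\mathbb{R}^{d-1}$. The two building axioms I would establish are (i) any two simplices of $\mathcal{B}_d(F)$ lie in a common apartment, and (ii) any two apartments sharing a chamber are related by a simplicial isomorphism fixing their intersection pointwise; both reduce to elementary-divisor / Smith-normal-form statements for pairs of $\mathcal{O}$-lattices. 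Given (i) and (ii), gluing the standard Euclidean metric on each simplex yields a well-defined geodesic metric under which apartments embed isometrically as copies of $\mathbb{R}^{d-1}$, and a local comparison argument reducing to one apartment verifies the CAT(0) inequality. Since a complete CAT(0) space deformation retracts to any basepoint along geodesics, $\mathcal{B}_d(F)$ is contractible.

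The main obstacle is establishing axioms (i) and (ii); once these are in hand, CAT(0) and contractibility are routine. A cleaner, purely topological alternative avoiding metric geometry is to fix a base chamber $C_0$, let $\mathcal{B}_n$ be the subcomplex of chambers at gallery distance at most $n$ from $C_0$, and induct on $n$: each new chamber attached to $\mathcal{B}_n$ is glued along the star of one of its codimension-one faces, a contractible subcomplex, so the inclusion $\mathcal{B}_n \hookrightarrow \mathcal{B}_{n+1}$ is a homotopy equivalence; contractibility then passes to the direct limit $\mathcal{B}_d(F) = \bigcup_n \mathcal{B}_n$. In either approach the real content is the same — controlling how chambers fit together — and this is the heart of Bruhat-Tits theory.
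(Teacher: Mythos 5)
The paper itself offers no proof of this theorem: it is quoted as a standard fact of Bruhat--Tits theory (``a special case of the Bruhat-Tits building associated with a reductive group over $F$''), so there is no argument in the source to compare against. Your link computation is correct and essentially complete: the uniqueness of the sandwiched representative $\pi L_0 \subsetneq L' \subsetneq L_0$ for a neighbour of $[L_0]$, the reduction modulo $\pi L_0$ identifying such $L'$ with proper nontrivial subspaces of $L_0/\pi L_0 \cong \mathbb{F}_q^d$, and the matching of higher simplices with flags (after cyclically rotating the chain so that $L_0$ sits at the top) all check out. One small correction: to transfer the computation from $[L_0]$ to an arbitrary vertex you need transitivity of $\mathrm{PGL}_d(F)$ on the vertex set, not naturality under $\mathrm{GL}_d(\mathcal{O})$, which merely stabilizes $[L_0]$; the paper records this transitivity just before the theorem, so this is harmless but worth stating precisely.

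The contractibility part, however, is a roadmap rather than a proof, and you say as much. Neither route is actually carried out: in the CAT(0) route, axioms (i) and (ii) --- the real content of Bruhat--Tits theory for $\mathrm{PGL}_d$ --- are only named, and the local CAT(0) comparison is likewise deferred. In the gallery-distance route, the step ``each new chamber \ldots is glued along the star of one of its codimension-one faces, a contractible subcomplex'' is precisely where the difficulty lives and is not justified: a chamber $C$ at gallery distance $n+1$ from the base chamber may a priori meet $\mathcal{B}_n$ in several of its panels, and one must rule out that $C \cap \mathcal{B}_n$ is all of $\partial C$ (which would fill a $(d-2)$-sphere and could change the homotopy type). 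That this pathology cannot occur is a genuine theorem about buildings --- it follows, for instance, from the gate/projection property or from the existence of the retraction $\rho_{\Sigma,C_0}$ onto an apartment --- and not an observation. So the induction, as written, has a gap at exactly the place you identify as the heart of the matter; a complete proof would need to supply either the building axioms (i)--(ii) for lattice chains via elementary divisors, or the retraction argument, before either route goes through.
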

The vertices of $\mathcal{B}$ come with a natural coloring (``type'').
Let $\tau:\mathcal{B}^{0}\rightarrow\nicefrac{\mathbb{Z}}{d\mathbb{Z}}$
be defined as follows: Let $\mathcal{O}^{d}\subseteq V$ be the standard
lattice in $V$. For any lattice $L$, there exists $g\in GL\left(V\right)=GL_{d}\left(F\right)$
such that $L=g\left(\mathcal{O}^{d}\right)$. Define $\tau\left(\left[L\right]\right)=\nu\left(\det\left(g\right)\right)\left(\mathrm{mod}\: d\right)$
where $\nu$ is the valuation of $F$, e.g.\ for $F=\mathbb{F}_{q}\left(\left(t\right)\right)$,
$\nu\left(\sum_{i=m}^{\infty}a_{i}t^{i}\right)=m$ when $m\in\mathbb{Z}$
and $a_{m}\neq0$.

The group $\mathrm{GL}_{d}\left(F\right)$ acts transitively on the
$\mathcal{O}$-lattices in $V$ and its center preserves the equivalence
classes. As the action preserves inclusions, $G=\mathrm{PGL}_{d}\left(F\right)$
acts on the building $\mathcal{B}$. It acts transitively on $\mathcal{B}^{0}$
- the vertices - without preserving their colors, but $\mathrm{PSL}_{d}\left(F\right)$
does preserves them. The stabilizer of the (equivalence class of the)
standard lattice is $K=\mathrm{PGL}_{d}\left(\mathcal{O}\right)$.
Hence $\mathcal{B}^{0}$ can be identified with $\nicefrac{G}{K}$.
To every directed edge $\left(x,y\right)\in\mathcal{B}^{1}$ one can
associate the color $\tau\left(y\right)-\tau\left(x\right)\in\nicefrac{\mathbb{Z}}{d\mathbb{Z}}$.
The color of edges is preserved by $\mathrm{PGL}_{d}\left(F\right)$.

Let us now define $d-1$ operators - the Hecke operators - as follows:
For $1\leq k\leq d-1$, $f\in L^{2}\left(\mathcal{B}^{0}\right)$
and $x\in\mathcal{B}^{0}$,
\begin{equation}
\left(A_{k}f\right)\left(x\right)=\sum f\left(y\right)\label{eq:Hecke-Ak}
\end{equation}
where the summation is over the neighbors $y$ of $x$ such that $\tau\left(y\right)-\tau\left(x\right)=k\in\nicefrac{\mathbb{Z}}{d\mathbb{Z}}$.
This really amounts to a sum over the sublattices of $L$ containing
$\pi L$, whose projection in $\nicefrac{L}{\pi L}$ is of codimension
$k$ there. Note that $A_{k}$ commutes with the action of $\mathrm{PGL}_{d}\left(F\right)$.
One can show that these operators are bounded, normal and commute
with each other. But in general they are not self-adjoint. In fact,
$A_{k}^{*}=A_{d-k}$ so $A_{k}+A_{d-k}$ is self-adjoint. Moreover
$\Delta=\sum_{k=1}^{d-1}A_{k}$ is the adjacency operator of the $1$-skeleton
of $\mathcal{B}$. For $d=2$, we only have $A_{1}=A_{1}^{*}$ which
is indeed the adjacency operator of the tree. As the operators $A_{k}$
commute with each other they can be diagonalized simultaneously. Their
common spectrum is therefore a subset $\Sigma_{d}$ of $\mathbb{C}^{d-1}$.
\begin{thm}
\label{thm:hecke-spectrum}Let $S=\left\{ \left(z_{1},\ldots,z_{d}\right)\in\mathbb{C}^{d}\,\middle|\,\left|z_{i}\right|=1\: and\:\prod_{i=1}^{d}z_{i}=1\right\} $
and $\sigma:\left(z_{1},\ldots,z_{d}\right)\mapsto\left(\lambda_{1},\ldots,\lambda_{d-1}\right)$
where $\lambda_{k}=q^{\frac{k\left(d-k\right)}{2}}\sigma_{k}\left(z_{1},\ldots,z_{d}\right)$.
Here $\sigma_{k}$ is the $k^{\mathrm{th}}$ elementary symmetric
function, i.e.\ $\sigma_{k}\left(z_{1},\ldots,z_{d}\right)=\sum\limits _{i_{i}<\ldots<i_{k}}z_{i_{1}}\cdot\ldots\cdot z_{i_{k}}$.
Then $\sigma\left(S\right)$ is equal to $\Sigma_{d}$, the simultaneous
spectrum of $A_{1},\ldots,A_{d-1}$ acting on $L^{2}\left(\mathcal{B}^{0}\right)$.
\end{thm}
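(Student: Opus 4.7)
The plan is to reduce the spectrum computation to the Satake/Plancherel theory for $G = \mathrm{PGL}_d(F)$, in the same spirit as the $d=2$ case from \S\ref{sub:Bruhat-Tits-trees}. First I would identify $L^2(\mathcal{B}^0) = L^2(G/K)$ (as $K$ is the stabilizer of the standard lattice class), and interpret each Hecke operator $A_k$ as right-convolution on this space by the characteristic function $\overline{\delta}_k$ of the double coset $K\,\mathrm{diag}(\pi,\ldots,\pi,1,\ldots,1)\,K$ with $k$ copies of $\pi$. Thus $A_1,\ldots,A_{d-1}$ lie in the spherical Hecke algebra $C = C_c(K\backslash G/K)$, which is commutative (Gelfand pair), so they admit a joint spectral decomposition. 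The problem becomes: determine the support, in $\widehat C$, of the representation of $C$ on $L^2(G/K)$.

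Next I would invoke the decomposition $L^2(G/K)$ into $K$-spherical unitary representations of $G$. Exactly as sketched for $\mathrm{PGL}_2$, each irreducible $K$-spherical $\pi$ has a one-dimensional $K$-fixed line on which $C$ acts by a character $\chi_\pi \in \widehat{C}$, and $\pi$ is determined by $\chi_\pi$. The Plancherel theorem for $G$ says that only the \emph{tempered} part of the spherical dual appears in the support of $L^2(G/K)$. For $G = \mathrm{PGL}_d(F)$ the tempered spherical representations are the unramified unitary principal series $\pi_z$ obtained by unitarily inducing an unramified unitary character of the standard Borel; such characters of the diagonal torus $A$ modulo center and modulo $A\cap K$ are parameterized by $(z_1,\ldots,z_d)\in (\mathbb{C}^\times)^d$ with $\prod z_i = 1$ and $|z_i|=1$, i.e.\ by points of $S$, modulo the Weyl group $W = S_d$.

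Third, I would compute the eigenvalues of each $A_k$ on the spherical vector $v^K_z \in \pi_z$ via the Satake transform. Writing $G = KAN$ (Iwasawa), the spherical function $\varphi_z(g) = \langle \pi_z(g)v^K_z, v^K_z\rangle$ satisfies $\int_K \varphi_z(ak)\,dk = \delta(a)^{1/2}\chi_z(a)$ plus $W$-symmetrization, and a standard Macdonald-type calculation — counting the neighbors $y\sim x$ in $\mathcal{B}$ with $\tau(y)-\tau(x)=k$ lying in each $N$-orbit and keeping track of the modular factor $\delta^{1/2}$ — yields
\[
A_k v^K_z \;=\; q^{k(d-k)/2}\,e_k(z_1,\ldots,z_d)\,v^K_z,
\]
where the power $q^{k(d-k)/2}$ arises from $\delta^{1/2}$ applied to the cocharacter $\mathrm{diag}(\pi^k,\ldots,\pi^k,1,\ldots,1)$, and $e_k$ is exactly the orbit sum of $z_1\cdots z_k$ under $W = S_d$. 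Hence the joint eigenvalue attached to the class of $z\in S/W$ is precisely $\sigma(z)$.

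Finally I would conclude that $\Sigma_d$, which by Plancherel is the closure of the set of joint spectral values arising from tempered spherical $\pi_z$, equals $\sigma(S)$; compactness of $S$ and continuity of $\sigma$ make closure automatic, and surjectivity onto $\sigma(S)$ follows since every $z\in S$ is actually achieved by a tempered principal series. The main obstacle, and the step I expect to carry the analytic weight, is the third one: pinning down the precise normalization $q^{k(d-k)/2}$ in the Satake eigenvalue. The commutativity of $C$, the boundedness/normality of the $A_k$, and the identification $A_k^* = A_{d-k}$ are comparatively routine once the dictionary with $C_c(K\backslash G/K)$ is set up. By contrast, the cleanest way to justify that no non-tempered spherical representation (other than the trivial one, which is not in $L^2(G/K)$ since $G$ is non-amenable) contributes to $L^2(\mathcal{B}^0)$ is to appeal to the Plancherel formula of Macdonald for $p$-adic reductive groups — this is where I would cite, rather than reprove.
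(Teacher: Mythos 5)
Your proposal matches the approach the paper itself references: the theorem is stated without proof and attributed to \cite{Lubotzky2005a}, and the surrounding discussion in \S\ref{sub:Representation-theory-of-PGLd} lays out exactly the ingredients you use — the identification of the $A_k$ with generators of the spherical Hecke algebra $C_c(K\backslash G/K)$, the Satake parametrization with the eigenvalue formula $w(A_k) = q^{k(d-k)/2}\sigma_k(z_1,\ldots,z_d)$, and the characterization of tempered spherical representations as those weakly contained in $L^2(G/K)$ (equivalently $|z_i|=1$ for all $i$, with $\prod z_i = 1$ to factor through $\mathrm{PGL}_d$). Your appeal to Macdonald's spherical Plancherel theorem to establish that only the tempered part of the spherical dual contributes to the spectral support of $L^2(\mathcal{B}^0)$ is the standard way to make this rigorous, and your identification of where the analytic weight lies (the normalization $q^{k(d-k)/2}$ from the modular character) is apt.
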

Note that indeed $\lambda_{k}=\overline{\lambda_{d-k}}$ as had to
be expected, since $A_{k}=A_{d-k}^{*}$. Also for $d=2$, 
\[
\Sigma_{2}=\sigma\left(S\right)=\left\{ q^{\frac{1}{2}}\left(z+\frac{1}{z}\right)\,\middle|\, z\in\mathbb{C},\left|z\right|=1\right\} =\left[-2\sqrt{q},2\sqrt{q}\right]
\]
which shows that Theorem \ref{thm:hecke-spectrum} is a generalization
of Proposition \ref{prop:tree-spec}.

Ramanujan $\left(q+1\right)$-regular graphs were defined as the finite
quotients of $\mathcal{B}_{2}=T_{q+1}$ whose ``non-trivial'' eigenvalues
are all in $\Sigma_{2}$. Similarly we will define Ramanujan complexes
as quotients of $\mathcal{B}_{d}$ whose ``non-trivial'' eigenvalues
are in $\Sigma_{d}$. Let us describe first the trivial eigenvalues:
Recall that for $d=2$ we have two such: $\left(q+1\right)$ and $-\left(q+1\right)$.
They appear in all the finite quotients $\Gamma\backslash\mathcal{B}_{2}$
when $\Gamma$ preserves the colors of the vertices (and only $q+1$
appears in all the finite quotients).

So, assume $\Gamma\leq\mathrm{PGL}_{d}\left(F\right)$ is a cocompact
lattice preserving the colors of the vertices of $\mathcal{B}^{0}$.
So, $\tau$ is well defined on $X=\Gamma\backslash\mathcal{B}^{0}$.
For a $d^{\mathrm{th}}$ root of unity $\xi$, define $f_{\xi}:X\rightarrow\mathbb{C}$
by $f_{\xi}\left(x\right)=\xi^{\tau\left(x\right)}$. Now, $A_{k}f_{\xi}\left(x\right)$
sums over the neighbors of $x$ of color $\tau\left(x\right)+k\left(\mathrm{mod}\: d\right)$
and there are $\left[\begin{smallmatrix}d\\
k
\end{smallmatrix}\right]_{q}$ vertices like that (where $\left[\begin{smallmatrix}d\\
k
\end{smallmatrix}\right]_{q}$ denotes the number of subspaces of codimension $k$ in $\mathbb{F}_{q}^{d}$).
Hence $A_{k}f_{\xi}\left(x\right)=\left[\begin{smallmatrix}d\\
k
\end{smallmatrix}\right]_{q}\xi^{\tau\left(x\right)+k}=\left[\begin{smallmatrix}d\\
k
\end{smallmatrix}\right]_{q}\xi^{k}f_{\xi}\left(x\right)$. Thus, for every $\xi\in\mathbb{C}$ with $\xi^{d}=1$, we get a
simultaneous ``trivial'' eigenvalue $\left(\left[\begin{smallmatrix}d\\
1
\end{smallmatrix}\right]_{q}\xi^{1},\ldots,\left[\begin{smallmatrix}d\\
k
\end{smallmatrix}\right]_{q}\xi^{k},\ldots,\left[\begin{smallmatrix}d\\
d-1
\end{smallmatrix}\right]_{q}\xi^{d-1}\right)$. These are the $d$ trivial eigenvalues. Again, for $d=2$, we get
$\left[\begin{smallmatrix}2\\
1
\end{smallmatrix}\right]_{q}\cdot1=q+1$ and $\left[\begin{smallmatrix}2\\
1
\end{smallmatrix}\right]_{q}\left(-1\right)=-\left(q+1\right)$. We can now define
\begin{defn}
A Ramanujan complex (of type $\widetilde{A}_{d-1}$) is a finite quotient
$X=\Gamma\backslash\mathcal{B}_{d}$ satisfying: every simultaneous
eigenvalue $\left(\lambda_{1},\ldots,\lambda_{k},\ldots,\lambda_{d-1}\right)$
of $\left(A_{1},\ldots,A_{k},\ldots,A_{d-1}\right)$ satisfies: either
$\left(\lambda_{1},\ldots,\lambda_{d-1}\right)$ is one of the $d$
trivial eigenvalues (i.e.\ $\left(\lambda_{1},\ldots,\lambda_{d-1}\right)=\left(\left[\begin{smallmatrix}d\\
1
\end{smallmatrix}\right]_{q}\xi^{1},\ldots,\left[\begin{smallmatrix}d\\
d-1
\end{smallmatrix}\right]_{q}\xi^{d-1}\right)$ for some $d^{\mathrm{th}}$ root of unity $\xi$) or $\left(\lambda_{1},\ldots,\lambda_{d-1}\right)\in\Sigma_{d}$,
described in Theorem \ref{thm:hecke-spectrum}.
\end{defn}
In the case of $d=2$, we saw the Alon-Boppana Theorem (Theorem \ref{thm:Alon-Boppana})
which shows that the Ramanujan bounds are the strongest one can hope
from an infinite family of $\left(q+1\right)$-regular graphs (for
a fixed $q$). The following theorem is a strong high dimensional
version.
\begin{thm}[{\cite[Theorem 4.3]{li2004ramanujan}}]
\label{thm:Li-AlonBoppana}Let $X_{i}$ be a family of finite quotients
of $\mathcal{B}_{d}$ with unbounded injective radius (recall that
the injective radius of a quotient $\pi:\mathcal{B}\rightarrow\Gamma\backslash\mathcal{B}$
is the maximal $r$ such that $\pi$ is an isomorphism when restricted
to any ball of radius $r$ in $\mathcal{B}$). Then $\overline{\bigcup\mathrm{spec}_{X_{i}}\left(A_{1},\ldots,A_{d-1}\right)}\supseteq\Sigma_{d}$.
\end{thm}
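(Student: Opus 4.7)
My plan is to run a building analogue of the classical Alon--Boppana argument: Macdonald's zonal spherical functions on $\mathrm{PGL}_d(F)$ play the role of the tree spherical functions used in the $d=2$ case, and a joint-approximate-eigenvector criterion for the commuting family $A_1,\ldots,A_{d-1}$ replaces the scalar Rayleigh quotient. Fix $\lambda=(\lambda_1,\ldots,\lambda_{d-1})\in\Sigma_d$, written via Theorem \ref{thm:hecke-spectrum} as $\lambda_k=q^{k(d-k)/2}\sigma_k(z_1,\ldots,z_d)$ for some $z\in S$, and fix $\varepsilon>0$. The goal is to exhibit, for every sufficiently large $i$, a joint eigenvalue of $(A_1,\ldots,A_{d-1})$ on $L^2(X_i^0)$ within distance $\varepsilon$ of $\lambda$.

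First I would produce a joint eigenfunction $\varphi_z\colon\mathcal{B}_d^0\to\mathbb{C}$ of $A_1,\ldots,A_{d-1}$ with eigenvalues $\lambda_1,\ldots,\lambda_{d-1}$, together with quantitative bounds. This is the Macdonald zonal spherical function attached to the unramified character parametrized by $z$: it is $K$-invariant on $G/K=\mathcal{B}_d^0$, and, since $z\in S$, the associated unramified principal series is tempered, so $\varphi_z$ is bounded and its restriction to the positive Weyl chamber obeys the Harish-Chandra $\Xi$-type decay. In particular $\sum_{x\in B_r(x_0)}|\varphi_z(x)|^2$ grows only polynomially in $r$, while the volume of $B_r(x_0)$ in $\mathcal{B}_d$ grows exponentially.

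Next I form the truncation $\psi_r:=\mathbf{1}_{B_r(x_0)}\cdot\varphi_z$. Because each $A_k$ is a nearest-neighbor operator on $\mathcal{B}_d^0$, the defect $(A_k\psi_r-\lambda_k\psi_r)(x)$ vanishes for $x\in B_{r-1}(x_0)$ and is supported on the annulus $B_{r+1}(x_0)\setminus B_{r-1}(x_0)$, bounded pointwise by $\|A_k\|_\infty\cdot\|\varphi_z\|_\infty$. Comparing the boundary contribution to the polynomial lower bound on $\|\psi_r\|_2^2$ yields $\|A_k\psi_r-\lambda_k\psi_r\|_2/\|\psi_r\|_2\to 0$ as $r\to\infty$, uniformly in $k$. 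Pick $r$ so that this ratio is $<\varepsilon/\sqrt{d-1}$ for every $k$ simultaneously. Then I push down to the quotient: by the injectivity-radius hypothesis, for all large $i$ there exists a basepoint $\bar x_0\in X_i$ around which the ball of radius $r+1$ is isometric to $B_{r+1}(x_0)\subseteq\mathcal{B}_d$, so $\psi_r$ descends to $\tilde\psi_r\in L^2(X_i^0)$ with the same $L^2$-norm and the same pointwise defect. Since $A_1,\ldots,A_{d-1}$ are commuting normal operators on the finite-dimensional space $L^2(X_i^0)$ they are simultaneously diagonalizable, and expanding $\tilde\psi_r$ in a joint orthonormal eigenbasis forces some joint eigenvalue $\mu^{(i)}=(\mu_1^{(i)},\ldots,\mu_{d-1}^{(i)})$ of $X_i$ to satisfy $\max_k|\mu_k^{(i)}-\lambda_k|<\varepsilon$. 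Letting $\varepsilon\to 0$ along a subsequence of $i$'s delivers $\lambda\in\overline{\bigcup_i\mathrm{spec}_{X_i}(A_1,\ldots,A_{d-1})}$.

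The main technical obstacle sits in Step 2: obtaining sharp quantitative control on $\varphi_z$. Macdonald's formula writes $\varphi_z$ as a Weyl-group sum whose $c$-function coefficients can degenerate along the reducibility locus of the principal series (the proper subvariety of $S$ where the formula produces a $0/0$ indeterminacy), so either $\varphi_z$ may vanish or the product formula requires regularization. The standard workaround is to prove the desired approximation for generic $z$, where Macdonald's formula gives the bounds cleanly, and then use density of the generic locus in $S$ together with closedness of $\overline{\bigcup_i\mathrm{spec}_{X_i}}$ to recover all $\lambda\in\Sigma_d$.
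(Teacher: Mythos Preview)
The paper does not give its own proof of this statement; Theorem~\ref{thm:Li-AlonBoppana} is quoted from \cite{li2004ramanujan} and then only used. So there is nothing in the paper to compare your argument against directly.

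Your overall strategy is the natural building analogue of the classical Alon--Boppana proof and is sound, but there is a real slip in the boundary estimate. You bound the defect $(A_k\psi_r-\lambda_k\psi_r)(x)$ on the annulus pointwise by the constant $\|A_k\|_\infty\cdot\|\varphi_z\|_\infty$. Since the annulus $B_{r+1}\setminus B_{r-1}$ contains exponentially many vertices, this yields an exponential upper bound on $\|A_k\psi_r-\lambda_k\psi_r\|_2^2$, which swamps any polynomial lower bound on $\|\psi_r\|_2^2$; the ratio does \emph{not} go to zero. The fix uses what you already set up: on the annulus the defect at $x$ equals, up to sign, a sum of values $\varphi_z(y)$ over the neighbours $y$ of $x$ lying on the wrong side of $\partial B_r$, so it is bounded pointwise by $\|A_k\|_\infty\cdot\max_{y\sim x}|\varphi_z(y)|$, not by the global supremum. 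With the $\Xi$-type decay you invoked, this gives $\|A_k\psi_r-\lambda_k\psi_r\|_2^2\lesssim S_r+S_{r+1}$ where $S_n=\sum_{d(x,x_0)=n}|\varphi_z(x)|^2$, and the comparison with $\|\psi_r\|_2^2=\sum_{n\le r}S_n$ now works because the $S_n$ grow at most polynomially. You also need the \emph{lower} bound $\|\psi_r\|_2\to\infty$, i.e.\ $\varphi_z\notin L^2(\mathcal{B}_d^0)$; this holds for every tempered spherical function but deserves a word.

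Your closing paragraph on the singular locus of Macdonald's formula is well taken, and the density argument you propose is the standard workaround. An alternative that sidesteps both this issue and the explicit asymptotics of $\varphi_z$: use only that any $\lambda\in\Sigma_d=\mathrm{spec}_{L^2(\mathcal{B}_d^0)}(A_1,\ldots,A_{d-1})$ admits, by the joint Weyl criterion for commuting normal operators, a unit vector $f\in L^2(\mathcal{B}_d^0)$ with $\|A_kf-\lambda_kf\|<\varepsilon$ for all $k$; approximate $f$ in $L^2$ by a finitely supported function (the $A_k$ are bounded, so this perturbs the defects only by a controlled amount), then transplant and diagonalize exactly as you do. This delivers every $\lambda\in\Sigma_d$ in one stroke, with no case analysis on the Satake parameters.
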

This shows that the best we can hope for the $X_{i}$'s is to be Ramanujan.
Note that $\mathrm{spec}_{X_{i}}\left(A_{1},\ldots,A_{d-1}\right)$
is a finite set for every $i$.

Let us end this section with the following remark:
\begin{rem}
The trivial eigenvalues of $\left(A_{1},\ldots,A_{d-1}\right)$ are
$\left(\lambda_{1},\ldots,\lambda_{d-1}\right)=\left(\left[\begin{smallmatrix}d\\
1
\end{smallmatrix}\right]_{q}\xi^{1},\ldots,\left[\begin{smallmatrix}d\\
d-1
\end{smallmatrix}\right]_{q}\xi^{d-1}\right)$. So for $A_{k}$, $\left|\lambda_{k}\right|=\left[\begin{smallmatrix}d\\
k
\end{smallmatrix}\right]_{q}\approx q^{k\left(d-k\right)}$ while the Ramanujan bound gives:
\[
\left|\lambda_{k}\right|\leq q^{\frac{k\left(d-k\right)}{2}}\left|\sigma_{k}\left(z_{1},\ldots,z_{k}\right)\right|\leq{d \choose k}q^{\frac{k\left(d-k\right)}{2}}
\]
so for $d$ fixed and $q$ large, the Ramanujan bound is approximately
the square root of the trivial eigenvalue.
\end{rem}
In §\ref{sub:Eigenvalues-and-expanders} we mentioned that Ramanujan
graphs can bee characterized by the fact that their zeta functions
satisfy the Riemann hypothesis. Recently there have been some efforts
to associate zeta functions to higher dimensional complexes with the
hope to give a similar characterization for Ramanujan complexes of
dimension $2$. See \cite{deitmar2006ihara,storm2006zeta,kang2010zeta}.
It will be nice if this theory could be extended also to higher dimensions.

\subsection{\label{sub:Representation-theory-of-PGLd}Representation theory of
$\mathrm{PGL}_{d}$}

In this section we will describe some basic results from the representation
theory of $\mathrm{PGL}_{d}\left(F\right)$, $F$ a local field. For
a more comprehensive survey see \cite{cartier1979representations}.
We will give only those results which are needed for our combinatorial
application. The goal is to get a high dimensional generalization
of Theorem \ref{thm:Ramanujan-rep}, i.e., a representation theoretic
formulation of Ramanujan complexes.

Let $G=\mathrm{PGL}_{d}\left(F\right)$ and $K=\mathrm{PGL}_{d}\left(\mathcal{O}\right)$,
$\mathcal{O}$ the ring of integers of $F$. An irreducible unitary
representation $\left(\mathcal{H},\rho\right)$ of $G$ is called
$K$-spherical if the space of $K$-fixed points $\mathcal{H}^{K}$
is non-zero. In this case $\dim\mathcal{H}^{K}=1$. Let $C=C_{c}\left(K\backslash G/K\right)$
be the algebra of compactly supported bi-$K$-invariant functions
from $G$ to $\mathbb{C}$, with multiplication defined by convolution
\[
f_{1}*f_{2}\left(x\right)=\int_{G}f_{1}\left(xg\right)f_{2}\left(g^{-1}\right)dg.
\]
The algebra $C$ is called the Hecke algebra of $G$. Let $\widetilde{\pi}_{k}=\mathrm{diag}\left(\pi,\pi,\ldots,\pi,1,1,\ldots,1\right)\in\mathrm{GL}_{d}\left(F\right)$
with $\det\left(\widetilde{\pi}_{k}\right)=\pi^{d-k}$, where $\pi$
is the uniformizer of $F$. Denote by $\pi_{k}$ the image of $\widetilde{\pi}_{k}$
in $\mathrm{PGL}_{d}\left(F\right)$ and let $A_{k}$ be the characteristic
function of $K\pi_{k}K$. Clearly $\left\{ A_{k}\right\} _{k=1}^{d-1}\subseteq C$
(note $\pi_{0}=\pi_{d}=I_{d}$). Less trivial is the fact that $C$
is commutative and is freely generated as a commutative algebra by
$A_{1},\ldots,A_{d-1}$ (cf.\ \cite[Chap.\ $V$]{macdonald1979symmetric}).
Every irreducible unitary representation $\left(\mathcal{H},\rho\right)$
of $G$ gives rise to a representation of $C$ on $\mathcal{H}^{K}$
and when $\mathcal{H}^{K}\neq\left\{ 0\right\} $, this last representation
is in fact given by a homomorphism $w:C\rightarrow\mathbb{C}$, $f\cdot v_{0}=w\left(f\right)v_{0}$
for $f\in C$. The representation $\rho$ is uniquely determined by
$w$ (cf.\ \cite[Prop.\ 2.2]{Lubotzky2005a} ) and $w$ is determined
by the $\left(d-1\right)$-tuple $\left(w\left(A_{1}\right),\ldots,w\left(A_{d-1}\right)\right)\in\mathbb{C}^{d-1}$.

Let us put this in a somewhat more known formulation: a more common
parametrization of the irreducible spherical representations of $\mathrm{GL}_{d}\left(F\right)$
(and hence also of $\mathrm{PGL}_{d}\left(F\right)$) is by their
\emph{Satake parametrization} $\left(z_{1},\ldots,z_{d}\right)\in\nicefrac{\left(\mathbb{C}^{\times}\right)^{d}}{Sym\left(d\right)}$.
This parametrization is related but not the same as the one we discuss
here. Let us just mention here that
\begin{enumerate}
\item A representation of $\mathrm{GL}_{d}\left(F\right)$ with Satake parameters
$\left(z_{1},\ldots,z_{k}\right)$ factors through $\mathrm{PGL}_{d}\left(F\right)$
iff $\prod_{i=1}^{d}z_{i}=1$.
\item If $\left(\mathcal{H},\rho\right)$ is an irreducible spherical representation
of $\mathrm{PGL}_{d}\left(F\right)$ with Satake parameters $\left(z_{1},\ldots,z_{d}\right)$
then $w\left(A_{k}\right)$ in the notation above is given by $w\left(A_{k}\right)=q^{\frac{k\left(d-k\right)}{2}}\sigma_{k}\left(z_{1},\ldots,z_{d}\right)$
where $\sigma_{k}$ is the $k^{\mathrm{th}}$ elementary symmetric
function on $d$ variables, $\sigma_{k}\left(z_{1},\ldots,z_{d}\right)=\sum\limits _{i_{i}<\ldots<i_{k}}z_{i_{1}}\cdot\ldots\cdot z_{i_{k}}$.
\item An irreducible representation $\left(\mathcal{H},\rho\right)$ is
called \emph{tempered}, if there exists $0\neq v,u\in\mathcal{H}$
such that the coefficient function $\psi\left(g\right)=\left\langle \rho\left(g\right)v,u\right\rangle $
is in $L^{2+\varepsilon}\left(G\right)$ for every $\varepsilon>0$.
These are exactly the representations which are \emph{weakly contained}
(in the sense of the Fell topology) in the representation of $G$
on $L^{2}\left(G\right)$. If such a representation is also $K$-spherical
then it is weakly contained in the representation of $G$ on $L^{2}\left(\nicefrac{G}{K}\right)=L^{2}\left(\mathcal{B}_{0}\right)$.
In terms of Satake parameters, $\rho$ is tempered iff $\left|z_{i}\right|=1$
for all $i$. 
\end{enumerate}
The reader is referred to more information in \cite{Lubotzky2005a}
and for the general theory in \cite{cartier1979representations}.
At this point, especially in light of (b) and (c) the reader may start
to guess the connection to Ramanujan complexes. Let us spell it out
explicitly.

Let $L_{0}=\mathcal{O}e_{1}+\ldots+\mathcal{O}e_{d}$ be the standard
$\mathcal{O}$-lattice in $V=F^{d}$ and $\left[L_{0}\right]$ its
equivalence class, which corresponds to $K$ under the identification
$\nicefrac{G}{K}=\mathcal{B}^{0}$. Let $\Omega_{k}$ be the set of
neighbors of color $k$ of $\left[L_{0}\right]$. Then $\pi_{k}^{-1}K\in\nicefrac{G}{K}=\mathcal{B}^{0}$
is one of these neighbors and $K$ (as a subgroup of $G$) acts transitively
on $\Omega_{k}$ so that $K\pi_{k}^{-1}K=\bigcup yK$ where the union
is over all $yK\in\Omega_{k}$. Multiplying from the left by an arbitrary
$g\in G$, we see that the neighbors of the vertex $gK$ forming an
edge of color $k$ with it, are exactly $\left\{ gyK\right\} _{yK\in\Omega_{k}}$.
It follows that the operator $A_{k}$ defined in \eqref{eq:Hecke-Ak}
in §\ref{sub:Bruhat-Tits-buildings}, can be expressed as follows:
Identifying $L^{2}\left(\mathcal{B}^{0}\right)=L^{2}\left(\nicefrac{G}{K}\right)$
with the right $K$-invariant functions in $L^{2}\left(G\right)$,
and assuming that $K$ has Haar measure one, for $f\in L^{2}\left(\mathcal{B}^{0}\right)$,
and $gK\in\mathcal{B}^{0}$

\begin{equation}
\begin{gathered}\left(A_{k}f\right)\left(gK\right)=\sum_{yK\in\Omega_{k}}f\left(gyK\right)=\sum_{yK\in\Omega_{k}}\int_{yK}f\left(gz\right)dz\\
=\int_{K\pi_{k}^{-1}K}f\left(gz\right)dz=\int_{G}f\left(gz\right)\mathbf{1}_{K\pi_{k}K}\left(z^{-1}\right)dz=\left(f*A_{k}\right)\left(gK\right)
\end{gathered}
\label{eq:Hecke-conv}
\end{equation}
where $A_{k}$ at the right hand side of equation \eqref{eq:Hecke-conv}
is the characteristic function of $K\pi_{k}K$, as defined in this
section. No confusion should occur here as eq.\ \eqref{eq:Hecke-conv}
shows that the Hecke operators of §\ref{sub:Bruhat-Tits-buildings}
and the Hecke operators of §\ref{sub:Representation-theory-of-PGLd}
are essentially the same thing! When $C=C_{c}\left(K\backslash G/K\right)$
acts on $L^{2}\left(\nicefrac{G}{K}\right)$, $A_{k}$ acts as the
adjacency operators summing over all the neighbors with edges of color
$k$.

We can now use this to deduce the main goal of this subsection (see
\cite[Prop.\ 1.5]{Lubotzky2005a})
\begin{prop}
\label{prop:ramanujan-tempered}Let $\Gamma$ be a cocompact lattice
of $\mathrm{PGL}_{d}\left(F\right)$. Then $\Gamma\backslash\mathcal{B}$
is a Ramanujan complex if and only if every irreducible spherical
infinite dimensional $G$-subrepresentation of $L^{2}\left(\Gamma\backslash\mathrm{PGL}_{d}\left(F\right)\right)$
is tempered.\end{prop}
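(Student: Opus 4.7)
The plan is to follow the $d=2$ proof of Theorem~\ref{thm:Ramanujan-rep}, translating eigenvalue information about $\Gamma\backslash\mathcal{B}$ into information about $L^{2}\left(\Gamma\backslash G\right)$ via the identity \eqref{eq:Hecke-conv} between the combinatorial Hecke operators of \S\ref{sub:Bruhat-Tits-buildings} and the convolution Hecke operators of this subsection. Since $\Gamma$ is cocompact, $L^{2}\left(\Gamma\backslash G\right)$ decomposes discretely as a Hilbert direct sum $\bigoplus_{i}\left(\mathcal{H}_{i},\rho_{i}\right)$ of irreducible unitary $G$-representations, and taking $K$-invariants picks out the spherical constituents, with each $\mathcal{H}_{i}^{K}$ either zero or one-dimensional. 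If $\mathcal{H}_{i}^{K}=\mathbb{C}v_{i}$, then by \eqref{eq:Hecke-conv} each Hecke operator $A_{k}$ acts on $v_{i}$ by the scalar $w_{i}\left(A_{k}\right)$ attached to $\rho_{i}$, so $v_{i}$ is a joint eigenvector of $\left(A_{1},\ldots,A_{d-1}\right)$ with eigenvalue-tuple $\left(w_{i}\left(A_{1}\right),\ldots,w_{i}\left(A_{d-1}\right)\right)$. Conversely, every joint eigenfunction $f\in L^{2}\left(\Gamma\backslash G/K\right)$ arises this way: the closed $G$-span $W$ of $f$ is a unitary $G$-representation containing the non-zero $K$-fixed vector $f$, so it decomposes into spherical irreducibles, and since $\rho_{i}$ is uniquely determined by $w_{i}$ all those irreducible constituents with a non-trivial contribution to $f$ must share the same eigenvalue-tuple.

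Next I would match eigenvalue-tuples with Satake parameters. By item~(b) of this subsection, if $\rho_{i}$ has Satake parameters $\left(z_{1}^{(i)},\ldots,z_{d}^{(i)}\right)$ then
\[
\left(w_{i}\left(A_{1}\right),\ldots,w_{i}\left(A_{d-1}\right)\right)=\sigma\left(z_{1}^{(i)},\ldots,z_{d}^{(i)}\right),
\]
with $\sigma$ as in Theorem~\ref{thm:hecke-spectrum}, and the constraint $\prod_{j}z_{j}^{(i)}=1$ is automatic by item~(a). Thus this tuple lies in $\Sigma_{d}=\sigma\left(S\right)$ if and only if $|z_{j}^{(i)}|=1$ for every $j$, which by item~(c) is exactly the condition that $\rho_{i}$ be tempered.

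It remains to match the $d$ excluded \emph{trivial} eigenvalue-tuples with the finite-dimensional spherical subrepresentations. For each $d$-th root of unity $\xi$ the character $\chi_{\xi}\colon g\mapsto\xi^{\nu\left(\det g\right)}$ is well defined on $\mathrm{PGL}_{d}\left(F\right)$ (because $\xi^{d}=1$), it is spherical, and its $K$-fixed function on $G/K=\mathcal{B}^{0}$ is the type-character $f_{\xi}\left(x\right)=\xi^{\tau\left(x\right)}$, whose eigenvalue-tuple was computed in \S\ref{sub:Bruhat-Tits-buildings} to be precisely the $\xi$-trivial one. Since every finite-dimensional unitary irreducible of $\mathrm{PGL}_{d}\left(F\right)$ factors through its abelianization, the $\chi_{\xi}$ with $\xi^{d}=1$ exhaust the finite-dimensional spherical irreducibles; together with the standing assumption that $\Gamma$ preserves colors, each $\chi_{\xi}$ really does embed in $L^{2}\left(\Gamma\backslash G\right)$ and contributes a trivial eigenvalue-tuple. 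The main obstacle is the converse half of this last step: one must verify that no \emph{infinite}-dimensional spherical irreducible can yield a trivial eigenvalue-tuple, so that a trivial tuple in $\mathrm{spec}\left(A_{1},\ldots,A_{d-1}\right)$ is always accounted for by one of the $\chi_{\xi}$. This amounts to showing that the non-tempered Satake parameters $z_{j}=q^{\left(d+1-2j\right)/2}\xi$ are realized only by a finite-dimensional quotient of the corresponding unramified principal series, a point which relies on the explicit classification of spherical unitary duals of $\mathrm{PGL}_{d}\left(F\right)$.
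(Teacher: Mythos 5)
Your argument follows essentially the same route as the paper's sketch: decompose $L^{2}\left(\Gamma\backslash G\right)$ discretely (using cocompactness), identify the joint Hecke eigenvalues of $L^{2}\left(\Gamma\backslash G/K\right)$ with the Hecke-algebra characters $w$ of the spherical constituents via \eqref{eq:Hecke-conv}, translate membership in $\Sigma_{d}$ into the temperedness condition $\left|z_{i}\right|=1$ via the Satake parametrization, and match the trivial eigenvalue-tuples with the one-dimensional spherical characters factoring through $\nicefrac{F^{\times}}{\mathcal{O}^{\times}\left(F^{\times}\right)^{d}}\simeq\nicefrac{\mathbb{Z}}{d\mathbb{Z}}$.

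The one step you flag as an obstacle is not actually one, and in particular no appeal to the explicit classification of the spherical unitary dual is required. You need to rule out that an infinite-dimensional spherical irreducible could produce a trivial eigenvalue-tuple. But a spherical irreducible is uniquely determined by its Hecke-algebra character $w$ --- a fact you already invoke earlier in the same proof --- and you have exhibited the one-dimensional character $\chi_{\xi}$ as a spherical irreducible with exactly the $\xi$-trivial tuple. Hence any spherical irreducible whose Hecke character is a trivial tuple must be isomorphic to some $\chi_{\xi}$, and is therefore one-dimensional. Connecting these two points that you already have in hand closes the gap. (For what it is worth, the paper's sketch passes over this just as quickly, asserting ``by assumption $\left(\lambda_{1},\ldots,\lambda_{d-1}\right)\in\Sigma_{d}$'' without explicitly excluding the trivial alternative; the justification is precisely this uniqueness of $\rho$ given $w$.)
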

\begin{proof}[Sketch of proof]
Assume every irreducible spherical infinite dimensional subrepresentation
of $\mathcal{H}=L^{2}\left(\Gamma\backslash\mathrm{PGL}_{d}\left(F\right)\right)$
is tempered. As $\Gamma$ is cocompact, $\mathcal{H}$ is a direct
sum of irreducible representations. Let $f\in L^{2}\left(\Gamma\backslash G/K\right)$
be a non-trivial simultaneous eigenfunction of the Hecke operators
$A_{k}$ with $A_{k}f=\lambda_{k}f$. As $\mathrm{PSL}_{d}\left(F\right)$
has no nontrivial finite dimensional representations, every finite
dimensional representation of $\mathrm{PGL}_{d}\left(F\right)$ factors
through $\nicefrac{\mathrm{PGL}_{d}\left(F\right)}{\mathrm{PSL}_{d}\left(F\right)}\simeq\nicefrac{F^{\times}}{\left(F^{\times}\right)^{d}}$.
Since $F^{\times}\simeq\mathbb{Z}\times\mathcal{O}^{\times}$, we
have $\nicefrac{F^{\times}}{\mathcal{O}^{\times}\left(F^{\times}\right)^{d}}\simeq\nicefrac{\mathbb{Z}}{d\mathbb{Z}}$
and since $f$ is fixed by $K$, if $f$ lies in a finite dimensional
$G$-subspace, it correspond to one of the $d$ trivial eigenvalue.
If $f$ spans an infinite dimensional $G$-space, then it is tempered,
its Satake parameters $\left(z_{1},\ldots,z_{d}\right)$ satisfy $\prod z_{i}=1$
and $\left|z_{i}\right|=1$. The corresponding eigenvalues of $A_{k}$
are, as explained in point (b) above, in $\Sigma_{d}$ as defined
in §\ref{sub:Bruhat-Tits-buildings}.

In the other direction: If $\mathcal{H}_{1}$ is an irreducible spherical
infinite dimensional subrepresentation of $L^{2}\left(\Gamma\backslash G\right)$,
then its unique (up to scalar) $K$-fixed vector $f$ is a simultaneous
eigenvector of all the $A_{k}$'s where $A_{k}f=\lambda_{k}f$. By
assumption $\left(\lambda_{1},\ldots,\lambda_{d-1}\right)\in\Sigma_{d}$,
from which we deduce that the Satake parameters $z_{i}$ all satisfy
$\left|z_{i}\right|=1$ and the representation is tempered.
\end{proof}
So, once again, as we saw for Ramanujan graphs, the problem of constructing
Ramanujan complexes moves from combinatorics to representation theory.
In the next subsection, we will describe how deep results in the area
of automorphic forms lead to such combinatorial constructions.

\subsection{\label{sub:Explicit-construction-complexes}Explicit construction
of Ramanujan complexes}

We will start with a general result which gives a lot of Ramanujan
complexes. We then continue to present an explicit construction.

Let us first recall some notations and add a few more: Let $k$ be
a global field of characteristic $p>0$ and $D$ a division algebra
of degree $d$ over $k$. Denote by $\utilde{G}$ the $k$-algebraic
group $\nicefrac{D^{\times}}{k^{\times}}$, and fix an embedding of
$\utilde{G}$ into $\mathrm{GL}_{n}$ for some $n$. Let $T$ be the
finite set of valuations of $k$ for which $D$ does not split. We
assume that for every $\nu\in T$, $D_{\nu}=D\otimes_{k}k_{\nu}$
is a division algebra. Let $\nu_{0}$ be a valuation of $k$ which
is not in $T$ and $F=k_{\nu_{0}}$, so that $\utilde{G}\left(F\right)\simeq\mathrm{PGL}_{d}\left(F\right)$,
and denote $S=T\bigcup\left\{ \nu_{0}\right\} $. For $\mathcal{O}_{S}=\left\{ x\in k\,\middle|\,\nu\left(x\right)\geq0\;\forall\nu\notin S\right\} $
the ring of $S$-integers in $k$, $\utilde{G}\left(\mathcal{O}_{S}\right):=\utilde{G}\left(k\right)\bigcap\mathrm{GL}_{n}\left(\mathcal{O}_{S}\right)$
embeds diagonally as a discrete subgroup of $\prod_{\nu\in S}\utilde{G}\left(k_{\nu}\right)$.
As $\utilde{G}\left(k_{\nu}\right)$ is compact for $\nu\in T$, projecting
$\utilde{G}\left(\mathcal{O}_{S}\right)$ into $\utilde{G}\left(k_{\nu_{0}}\right)=\utilde{G}\left(F\right)\simeq\mathrm{PGL}_{d}\left(F\right)$
gives an embedding of $\utilde{G}\left(\mathcal{O}_{S}\right)$ as
a discrete subgroup in $\mathrm{PGL}_{d}\left(F\right)$, which we
denote by $\Gamma$. In fact, by a general result on arithmetic subgroups,
$\Gamma$ is a cocompact lattice in $\mathrm{PGL}_{d}\left(F\right)$.
Thus if $\mathcal{B}=\mathcal{B}_{d}\left(F\right)$ is the Bruhat-Tits
building associated with $\mathrm{PGL}_{d}\left(F\right)$, then $\Gamma\backslash\mathcal{B}$
is a finite complex. The same is true when we mod $\mathcal{B}$ by
any finite index subgroup of $\Gamma$. In particular, if $0\neq I\triangleleft\mathcal{O}_{S}$
is an ideal, then the congruence subgroup $\Gamma\left(I\right):=\ker\left(\utilde{G}\left(\mathcal{O}_{S}\right)\rightarrow\utilde{G}\left(\nicefrac{\mathcal{O}_{S}}{I}\right)\right)$
is of finite index in $\Gamma$ and $\Gamma\left(I\right)\backslash\mathcal{B}$
is a finite simplicial complex covered by $\mathcal{B}$.
\begin{thm}
\label{thm:congruence-ramanujan-complex}For $\Gamma$ and $I$ as
above, $\Gamma\left(I\right)\backslash\mathcal{B}$ is a Ramanujan
complex.
\end{thm}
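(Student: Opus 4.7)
The plan is to reduce immediately to representation theory via Proposition~\ref{prop:ramanujan-tempered}: it suffices to show that every irreducible spherical infinite-dimensional $G$-subrepresentation of $L^{2}\!\left(\Gamma\!\left(I\right)\backslash G\right)$, where $G=\mathrm{PGL}_{d}\!\left(F\right)$, is tempered. This shifts the problem from combinatorics of $\Gamma\!\left(I\right)\backslash\mathcal{B}$ to the spectral decomposition of an arithmetic quotient.

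First I would pass to the adelic picture. Because $\Gamma\!\left(I\right)$ is a congruence subgroup of the $S$-arithmetic group $\utilde{G}\!\left(\mathcal{O}_{S}\right)$ and $\utilde{G}=\nicefrac{D^{\times}}{k^{\times}}$ satisfies strong approximation (the derived group $\mathrm{SL}_{1}\!\left(D\right)$ is simply connected and isotropic at $\nu_{0}$), there is an open compact subgroup $K_{I}\le\prod_{\nu\notin S}\utilde{G}\!\left(\mathcal{O}_{\nu}\right)$ such that the map $f\mapsto\tilde{f}$ gives a $G$-equivariant isometric embedding of the $K$-spherical part of $L^{2}\!\left(\Gamma\!\left(I\right)\backslash G\right)$ into $L^{2}\!\left(\utilde{G}\!\left(k\right)\backslash \utilde{G}\!\left(\mathbb{A}\right)\right)^{K_{I}\cdot K\cdot\prod_{\nu\in T}\utilde{G}\!\left(k_{\nu}\right)}$. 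Consequently, any irreducible spherical subrepresentation $\pi$ of $L^{2}\!\left(\Gamma\!\left(I\right)\backslash G\right)$ appears as the $\nu_{0}$-component of some irreducible automorphic representation $\Pi=\bigotimes_{\nu}'\Pi_{\nu}$ of $\utilde{G}\!\left(\mathbb{A}\right)$ occurring in the discrete spectrum (which is all of $L^{2}$, since $\utilde{G}$ is anisotropic over $k$ because $D$ is a division algebra).

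Now the two deep inputs. The global Jacquet--Langlands correspondence gives a bijection between the discrete automorphic spectrum of $\utilde{G}\!\left(\mathbb{A}\right)$ and the automorphic representations of $\mathrm{PGL}_{d}\!\left(\mathbb{A}\right)$ that are discrete series at every place of $T$, preserving the local component at every $\nu\notin T$; hence the transfer $\Pi'$ of $\Pi$ satisfies $\Pi'_{\nu_{0}}=\Pi_{\nu_{0}}=\pi$. Since $\pi$ is infinite-dimensional, $\Pi'$ is not a character, and on $\mathrm{PGL}_{d}$ the non-character discrete automorphic representations are cuspidal. At this point Lafforgue's theorem \cite{lafforgue2002chtoucas}, the Ramanujan--Petersson conjecture for cuspidal automorphic representations of $\mathrm{GL}_{d}$ over function fields, asserts that every local component of $\Pi'$ is tempered. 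In particular $\pi=\Pi'_{\nu_{0}}$ is tempered, which by Proposition~\ref{prop:ramanujan-tempered} is exactly what we needed.

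The representation-theoretic reduction and the adelic setup are routine once one accepts the framework. The main obstacles are the two deep inputs of the last paragraph: (i) separating the ``trivial'' (finite-dimensional, character) piece of the spectrum of $\utilde{G}\!\left(\mathbb{A}\right)$ from the rest and verifying that everything else transfers to cuspidal representations of $\mathrm{PGL}_{d}\!\left(\mathbb{A}\right)$ rather than to residual spectrum, and (ii) invoking Lafforgue's proof of the function-field Ramanujan conjecture, whose machinery (moduli of Drinfeld shtukas, the Arthur--Selberg trace formula, Langlands parametrization) is taken here as a black box. Both inputs are essential; the theorem would fail without them, just as Theorem~\ref{thm:cong-temp} in the rank-one case relied on Deligne/Drinfeld plus Jacquet--Langlands.
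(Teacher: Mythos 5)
Your high-level strategy matches the paper's sketch exactly: reduce via Proposition~\ref{prop:ramanujan-tempered} to temperedness, globalize to $L^{2}\left(\utilde{G}\left(k\right)\backslash\utilde{G}\left(\mathbb{A}\right)\right)$ (using strong approximation and the anisotropy of $D$), pass via Jacquet--Langlands to $\mathrm{PGL}_{d}\left(\mathbb{A}\right)$, and then quote Lafforgue. This is the right skeleton.

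However, there is a genuine gap at the pivotal sentence \emph{``on $\mathrm{PGL}_{d}$ the non-character discrete automorphic representations are cuspidal.''} This is false when $d$ is composite. By Moeglin--Waldspurger, the residual spectrum of $\mathrm{GL}_{d}\left(\mathbb{A}\right)$ is spanned by Speh representations $J\left(\sigma,m\right)$ built from cuspidal $\sigma$ on $\mathrm{GL}_{d/m}$ with $m\mid d$, $m\geq2$; for $d$ prime these are all characters, but for $d$ composite they include non-cuspidal, non-character representations whose local components are genuinely non-tempered. If $\Pi'$ were allowed to land in this part of the spectrum, the theorem would simply be false --- and this is precisely why the paper inserts the explicit caveat that for composite $d$ some congruence ideals in $\mathcal{O}_{\nu_{0}}$ do give non-Ramanujan complexes. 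Your argument as written does not engage with this.

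The way this is repaired in \cite{Lubotzky2005a} is to exploit the places in $T$: since $I\triangleleft\mathcal{O}_{S}$ is coprime to $T$, the global representation $\Pi$ of $\utilde{G}\left(\mathbb{A}\right)$ you produce is \emph{trivial} at every $\nu\in T$ (as $\utilde{G}\left(k_{\nu}\right)$ is compact and the invariance condition forces the component to be the trivial representation). Because each $D_{\nu}$, $\nu\in T$, is assumed to be a \emph{full} division algebra of index $d$, local Jacquet--Langlands sends the trivial representation of $D_{\nu}^{\times}$ to the (twisted) Steinberg representation of $\mathrm{GL}_{d}\left(k_{\nu}\right)$, a discrete series. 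On the other hand, the local component of any Speh representation $J\left(\sigma,m\right)$ with $m\geq2$ is a proper Langlands quotient with non-tempered exponents, hence never a discrete series, hence never Steinberg. This forces $\Pi'$ out of the residual spectrum and into the cuspidal spectrum, after which Lafforgue applies. Without this step (and without the standing hypothesis that $D_{\nu}$ is division for all $\nu\in T$), the deduction of cuspidality does not go through for composite $d$, and that is exactly the delicate point the paper flags and defers to \cite{Lubotzky2005a}.
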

A word of warning: if $d$ is not a prime then there are ideals in
$\mathcal{O}_{\nu_{0}}=\left\{ x\in k\,\middle|\,\nu\left(x\right)\geq0\;\forall\nu\neq\nu_{0}\right\} $
(so they may disappear in $\mathcal{O}_{S}$!) which give non-Ramanujan
complexes. We refer to \cite{Lubotzky2005a} for this delicate point
as well as for a proof of Theorem \ref{thm:congruence-ramanujan-complex}.
We will not try to explain the proof, but rather give few hints about
it. The Theorem is proved there by going from local to global. By
Proposition \ref{prop:ramanujan-tempered}\textbf{ }above, $\Gamma\left(I\right)\backslash\mathcal{B}$
is Ramanujan iff every infinite dimensional irreducible spherical
subrepresentation $\rho_{0}$ of $L^{2}\left(\Gamma\left(I\right)\backslash\mathrm{PGL}_{d}\left(F\right)\right)$
is tempered. One shows that such $\rho_{0}$ is a local factor at
$\nu_{0}$ of an automorphic adelic subrepresentation $\rho'$ of
$L^{2}\left(\utilde{G}\left(k\right)\backslash\utilde{G}\left(\mathbb{A}\right)\right)$
where $\mathbb{A}$ is the ring of adeles of $k$. By using the Jacquet\textendash{}Langlands
correspondence, one can replace $\rho'$ by a suitable subrepresentation
$\rho$ of $L^{2}\left(\mathrm{PGL}_{d}\left(k\right)\backslash\mathrm{PGL}_{d}\left(\mathbb{A}\right)\right)$.
Then one appeals to the work of Lafforgue \cite{lafforgue2002chtoucas}
(for which he got the Fields medal!) which is an extension to general
$d$ of the ``Ramanujan conjecture'' proved by Drinfeld for $d=2$.
This last result says that for various adelic automorphic representations,
the local factors are tempered. This can be applied to $\rho$ to
deduce that our $\rho_{0}$ is tempered and hence $\Gamma\left(I\right)\backslash\mathcal{B}$
is Ramanujan.

The description of the complexes we gave is pretty abstract but it
can be made very explicit in some cases. To this end we will make
use (following \cite{Lubotzky2005b}) of a remarkable arithmetic lattice
$\Gamma$ constructed by Cartwright and Steger \cite{cartwright1998family}.
This lattice has the following amazing\emph{ }property: It acts simply
transitively on the vertices of the building $\mathcal{B}_{d}$. Such
lattices are rare; for example in characteristic zero such lattices
exist only for finitely many $d$'s (see \cite{mohammadi2012discrete})).
Let us describe their (somewhat technical) construction:

We start with the global field $k=\mathbb{F}_{q}\left(y\right)$,
whose valuations are $\nu_{g}$ for every irreducible polynomial $g$
in $\mathbb{F}_{q}\left[y\right]$, and the \emph{minus degree }valuation,
$\nu_{\frac{1}{y}}\left(\nicefrac{f}{g}\right)=\deg g-\deg f$. Let
$\mathbb{F}_{q^{d}}$ be the field extension of $\mathbb{F}_{q}$
of degree $d$ and $\phi$ a generator of the Galois group $\mathrm{Gal}\left(\nicefrac{\mathbb{F}_{q^{d}}}{\mathbb{F}_{q}}\right)\simeq\nicefrac{\mathbb{Z}}{d\mathbb{Z}}$.
Fix a basis $\xi_{0},\ldots,\xi_{d-1}$ of $\mathbb{F}_{q^{d}}$ over
$\mathbb{F}_{q}$ with $\xi_{i}=\phi^{i}\left(\xi_{0}\right)$. Let
$D$ be the $k$-algebra with basis $\left\{ \xi_{i}z^{j}\right\} _{i,j=0}^{d-1}$
and relations $z\xi_{i}=\phi\left(\xi_{i}\right)z$ and $z^{d}=1+y$.
Then $D$ is a division algebra which ramifies at $T=\left\{ \nu_{1+y},\nu_{\frac{1}{y}}\right\} $
and splits at all other completions of $k$ (see \cite[Prop.\ 3.1]{Lubotzky2005b}).
That is, $D_{\nu_{1+y}}=D\otimes_{k}k_{\nu_{1+y}}=D\otimes_{k}\mathbb{F}_{q}\left(\left(1+y\right)\right)$
and $D_{\nu_{\frac{1}{y}}}={D\otimes_{k}\mathbb{F}_{q}\left(\left(\frac{1}{y}\right)\right)}$
are division algebras, while $D_{\nu}\simeq M_{d}\left(k_{\nu}\right)$
for $\nu\notin T$. In particular, $\utilde{G}\left(k_{\nu}\right)\simeq\mathrm{PGL}_{d}\left(k_{\nu}\right)$
for $\nu\notin T$, where we recall that  $\utilde{G}$ denotes the
$k$-algebraic group $\nicefrac{D^{\times}}{k^{\times}}$.

For $\nu_{0}$ we take the valuation $\nu_{y}$, which is given explicitly
by $\nu_{y}\left(a_{m}y^{m}+\ldots+a_{n}y^{n}\right)=m$ ($a_{m}\neq0$,
$m\leq n$). The completion of $k$ at $\nu_{0}$ is $F=k_{\nu_{y}}=\mathbb{F}_{q}\left(\left(y\right)\right)$,
the field of Laurent polynomials over $\mathbb{F}_{q}$. The ring
of integer of $F$ is $\mathcal{O}=\mathbb{F}_{q}\left[\left[y\right]\right]$,
and we recall that $\mathcal{B}_{d}^{0}\simeq\nicefrac{\mathrm{PGL}_{d}\left(F\right)}{\mathrm{PGL}_{d}\left(\mathcal{O}\right)}$.

We now have $S=\left\{ \nu_{1+y},\nu_{\frac{1}{y}},\nu_{y}\right\} $,
and the ring of $S$-integers in $k$ is $\mathcal{O}_{S}=\mathbb{F}_{q}\left[\frac{1}{1+y},y,\frac{1}{y}\right]$.
As explained above, embedding $\utilde{G}\left(k\right)$ in some
$\mathrm{GL}_{n}\left(k\right)$ gives rise to $\Gamma=\utilde{G}\left(\mathcal{O}_{S}\right)=\utilde{G}\left(k\right)\bigcap\mathrm{GL}_{n}\left(\mathcal{O}_{S}\right)$,
which embeds as a cocompact arithmetic lattice in $\utilde{G}\left(F\right)\simeq\mathrm{PGL}_{d}\left(F\right)$.

Until now we have followed the general construction described in the
beginning of this section. In what follows we describe the Cartwright-Steger
group, a subgroup of $\Gamma$ which acts simply transitively on $\mathcal{B}_{d}^{0}$.

The definition of $\Gamma=\utilde{G}\left(\mathcal{O}_{S}\right)$
involves a choice of an embedding of $\utilde{G}\left(k\right)$ in
$\mathrm{GL}_{n}\left(k\right)$. It turns out that this embedding
can be chosen so that $\Gamma$ is simply $\nicefrac{D\left(\mathcal{O}_{S}\right)^{\times}}{\mathcal{O}_{S}^{\times}}$,
where $D\left(\mathcal{O}_{S}\right)$ stands for the $\mathcal{O}_{S}$-algebra
having the  $\mathcal{O}_{S}$-basis $\left\{ \xi_{i}z^{j}\right\} _{i,j=0}^{d-1}$,
and again the relations $z\xi_{i}=\phi\left(\xi_{i}\right)z$ and
$z^{d}=1+y$ (see \cite[Prop.\ 3.3]{Lubotzky2005b}). Note that as
$z^{d}=1+y$ and $1+y$ is invertible in $\mathcal{O}_{S}$, $z$
is invertible in $D\left(\mathcal{O}_{S}\right)$. Let $b=1-z^{-1}\in D\left(\mathcal{O}_{S}\right)$,
and note that $b$ is also invertible, since it divides $1-z^{-d}=\frac{y}{1+y}$
and $y\in\mathcal{O}_{S}^{\times}$. Also note that $\mathbb{F}_{q^{d}}$
is a subring of $D\left(\mathcal{O}_{S}\right)$ spanned by the $\xi_{i}$'s.
For every $u\in\mathbb{F}_{q^{d}}^{\times}$ denote $b_{u}=ubu^{-1}\in D\left(\mathcal{O}_{S}\right)^{\times}$.
The element $b_{u}$ depends only on the coset of $u$ in $\nicefrac{\mathbb{F}_{q^{d}}^{\times}}{\mathbb{F}_{q}^{\times}}$,
since $\mathbb{F}_{q}\subseteq Z\left(D\left(\mathcal{O}_{S}\right)\right)$.
Denoting by $\overline{b_{u}}$ the image of $b_{u}$ in $\Gamma=\nicefrac{D\left(\mathcal{O}_{S}\right)^{\times}}{\mathcal{O}_{S}^{\times}}$,
this gives us a set of $\frac{q^{d}-1}{q-1}$ elements $\Sigma_{1}=\left\{ \overline{b_{u}}\,\middle|\, u\in\nicefrac{\mathbb{F}_{q^{d}}^{\times}}{\mathbb{F}_{q}^{\times}}\right\} $
in $\Gamma$. Let $\Lambda=\left\langle \Sigma_{1}\right\rangle $.
This is the promised Cartwright-Steger group.
\begin{thm}[{\cite{cartwright1998family}, cf. \cite[Prop.\ 4.8]{Lubotzky2005b}}]
The group $\Lambda$ acts simply transitively on the vertices of
$\mathcal{B}_{d}\left(F\right)$.
\end{thm}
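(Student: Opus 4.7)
The plan is to verify the two conditions that characterize simple transitivity of $\Lambda$ on $\mathcal{B}^0 = G/K$ (with $G = \mathrm{PGL}_d(F)$, $K = \mathrm{PGL}_d(\mathcal{O})$, base vertex $x_0 = K$): namely, $G = \Lambda K$ (transitivity) and $\Lambda \cap K = \{1\}$ (trivial stabilizers). The numerical match $|\Sigma_1| = (q^d-1)/(q-1) = |\Omega_1|$ (the number of color-$1$ neighbors of $x_0$) is the main hint about the right strategy: exhibit an explicit bijection between $\Sigma_1$ and $\Omega_1$ via the geometric action on $\mathcal{B}^0$, and bootstrap from there.

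The first technical move is to fix an explicit splitting $D \otimes_k F \cong M_d(F)$, available because $\nu_y \notin T$, which realises $z$ and the $\xi_i$'s as concrete matrices over $F$. Using the factorization $b \cdot (1 + z^{-1} + \cdots + z^{-(d-1)}) = 1 - z^{-d} = y/(1+y)$ inside the commutative subring generated by $z$, together with the fact that $1+y$ is a $\nu_y$-unit, one reads off that $\nu_y(\mathrm{Nrd}(b)) \equiv 1 \pmod d$. The reduced norm is conjugation-invariant, so every $\overline{b_u}$ has the same reduced norm as $b$; by the formula $\tau(g \cdot x_0) \equiv \nu_y(\det g) \pmod d$, each $\overline{b_u}$ sends $x_0$ to a color-$1$ neighbor.

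Next, I would show that the map $u \mapsto \overline{b_u}(x_0)$ is actually a bijection onto $\Omega_1$. Under the splitting, $L_0/yL_0 \cong \mathbb{F}_q^d$ becomes a faithful $\mathbb{F}_{q^d}$-module via the embedding $\mathbb{F}_{q^d} \hookrightarrow D$, and the reduction $\overline{b} \in \mathrm{End}_{\mathbb{F}_q}(L_0/yL_0)$ has a specific codimension-$1$ kernel. Conjugation by $u \in \mathbb{F}_{q^d}^\times$ moves this kernel by multiplication by $u$, and the induced action of $\mathbb{F}_{q^d}^\times/\mathbb{F}_q^\times$ on $\mathbb{P}(L_0/yL_0)$ is simply transitive, so the $\overline{b_u}(x_0)$ exhaust $\Omega_1$ without repetition. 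Combining this with $\Lambda$-equivariance of the argument gives that every vertex of $\Lambda \cdot x_0$ has all of its color-$1$ neighbors in the orbit; iterating, and using that color-$1$ steps cycle through all colors modulo $d$ and that $\mathcal{B}_d$ is connected, yields $\Lambda \cdot x_0 = \mathcal{B}^0$.

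The genuinely hard step is simple transitivity, i.e., $\Lambda \cap K = \{1\}$. My plan is a covolume argument: compute $\mathrm{vol}(\Gamma \backslash G)$ from the mass formula for $\utilde{G}(\mathcal{O}_S)$ (the input being that $D$ ramifies exactly at $T = \{\nu_{1+y}, \nu_{1/y}\}$), determine the index $[\Gamma : \Lambda]$ from an explicit presentation of $\Lambda$, and compare with $\mathrm{vol}(K)$; if $\Lambda$ acted transitively but some nontrivial element stabilised $x_0$, the volumes would not match. The main obstacle lies precisely in this bookkeeping: one must verify that the algebraic relations among the $\overline{b_u}$'s forced by the cyclic-algebra structure of $D$ yield an abstract group with no unexpected torsion and no unexpected relations, so that no nontrivial element lies in the compact group $K$. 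This is exactly the delicate content of Cartwright and Steger's construction, and it is what makes the existence of such a simply transitive lattice so rare, cf.\ \cite{mohammadi2012discrete}.
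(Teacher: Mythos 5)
The paper itself gives no proof of this theorem; it is quoted from Cartwright--Steger (with a pointer to \cite{Lubotzky2005b} for a detailed write-up). So there is no internal proof to match your argument against, and I will assess the plan on its own merits.

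Your transitivity argument is structurally sound and close to what one would actually do: establish that $u \mapsto \overline{b_u}\, x_0$ is a bijection from $\mathbb{F}_{q^d}^\times/\mathbb{F}_q^\times$ onto the color-$1$ neighbors $\Omega_1$ of $x_0$ (via the Singer-cycle action of $\mathbb{F}_{q^d}^\times$ on $L_0/yL_0 \cong \mathbb{F}_q^d$), then use $\Lambda$-equivariance and the fact that the color-$1$ subgraph of $\mathcal{B}_d^{(1)}$ is connected to conclude $\Lambda x_0 = \mathcal{B}_d^0$. Two caveats: your claim that the factorization $b\cdot(1 + z^{-1} + \cdots + z^{-(d-1)}) = y/(1+y)$ "reads off" $\nu_y(\mathrm{Nrd}\, b) \equiv 1 \pmod d$ is not quite right as stated --- it only gives $\nu_y(\mathrm{Nrd}\, b) + \nu_y(\mathrm{Nrd}(1+\cdots+z^{-(d-1)})) = d$, and you still need to control the second factor (one can instead compute $\mathrm{Nrd}(b) = y/(1+y)$ directly from the reduced characteristic polynomial of $z^{-1}$). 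And knowing the color is $1 \pmod d$ is not enough to know $b_u x_0$ is \emph{adjacent} to $x_0$; one must actually check $yL_0 \subset b_u L_0 \subset L_0$ under the chosen splitting. Both are repairable.

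The genuine gap is simple transitivity, i.e.\ $\Lambda \cap K = \{1\}$, and you correctly flag it as the hard part --- but the proposed covolume argument is circular as written. You want to compare $\mathrm{vol}(\Lambda\backslash G)$ with $\mathrm{vol}(K)$, and you propose to obtain $[\Gamma:\Lambda]$ ``from an explicit presentation of $\Lambda$.'' But determining a presentation of $\Lambda$ and verifying it has ``no unexpected relations'' is precisely the problem that freeness of the action would solve; if you had such a presentation, you would not need the covolume comparison at all. Some independent input is required here --- e.g.\ a strong-approximation computation of the image of $\Lambda$ in $\prod_{\nu \notin S}\utilde{G}(\mathcal{O}_\nu)$ giving $[\Gamma:\Lambda]$ directly, or (as Cartwright--Steger actually do) an explicit combinatorial/Iwahori-theoretic analysis that produces a set-theoretic section $\Lambda \to W_{\mathrm{aff}}$ compatible with the word metric. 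As it stands, the proposal contains a correct reduction and a correct identification of where the difficulty lives, but does not resolve the freeness step.
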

The set $\Sigma_{1}=\left\{ \overline{b_{u}}\,\middle|\, u\in\nicefrac{\mathbb{F}_{q^{d}}^{\times}}{\mathbb{F}_{q}^{\times}}\right\} $
takes the ``initial vertex'' $x_{0}$ of the building (i.e.\ the
equivalence class of the standard lattice) to the set of its neighbors
$x$ with $\tau\left(x\right)=1$ (i.e.\ the neighboring vertices
of color $1$, for which the connecting edge also has color $1$).
These correspond to the codimension one subspaces of $\mathbb{F}_{q}^{d}$
and indeed there are $\frac{q^{d}-1}{q-1}$ such (on which the finite
group $\nicefrac{\mathbb{F}_{q^{d}}^{\times}}{\mathbb{F}_{q}^{\times}}$
acts transitively!) Now, for $i=2,\ldots,d-1$, let $\Sigma_{i}=\left\{ \gamma\in\Lambda\,\middle|\,\tau\left(\left(x_{0},\gamma x_{0}\right)\right)=i\right\} $,
i.e., the subset of $\Lambda$ of those elements which takes $x_{0}$
to a neighbor of color $i$. As $\Lambda$ acts simply transitively
$\left|\Sigma_{i}\right|=\left[\begin{smallmatrix}d\\
i
\end{smallmatrix}\right]_{q}$ where $\left[\begin{smallmatrix}d\\
i
\end{smallmatrix}\right]_{q}$ is the number of subspaces of $\mathbb{F}_{q}^{d}$ of codimension
$i$. Let $\Sigma=\bigcup_{i=1}^{d-1}\Sigma_{i}$. One can deduce
now that the $1$-skeleton of $\mathcal{B}_{d}$ can be identified
with $\mathrm{Cay}\left(\Lambda;\Sigma\right)$. 

Now for every $0\neq I\triangleleft\mathcal{O}_{S}$, we can define
$\Lambda\left(I\right)$ as $\Lambda\left(I\right)=\ker\left(\Lambda\rightarrow\utilde{G}\left(\nicefrac{\mathcal{O}_{S}}{I}\right)\right)$.
This defines a complex $\Lambda\left(I\right)\backslash\mathcal{B}_{d}$
which by Theorem \ref{thm:congruence-ramanujan-complex} is a Ramanujan
complex.

Observe now that the building $\mathcal{B}$ is a clique complex,
namely, a set of $i+1$ vertices forms a simplex if and only if every
two vertices in it form a $1$-edge. In particular, the full structure
of the complex is determined by the $1$-skeleton. The same is true
for the quotients $\Lambda\left(I\right)\backslash\mathcal{B}_{d}$
(at least for large enough quotients, since the map $\mathcal{B}_{d}\rightarrow\Lambda\left(I\right)\backslash\mathcal{B}_{d}$
is a local isomorphism, moreover the injective radius of $\Lambda\left(I\right)\backslash\mathcal{B}_{d}$
grows logarithmically w.r.t.\ its size). So, these complexes are
the Cayley complexes of the group $\nicefrac{\Lambda}{\Lambda\left(I\right)}$
with respect to the set of generators $\Sigma$, or more precisely,
their images in $\nicefrac{\Lambda}{\Lambda\left(I\right)}$. Recall
that a Cayley complex of a group $H$ w.r.t.\ a symmetric set of
generators $\Sigma$ is the simplicial complex for which a subset
$\Delta$ of $H$ is a simplex iff $a^{-1}b\in\Sigma$ for every $a,b\in\Delta$.
This is the clique complex determined by $\mathrm{Cay}\left(H;\Sigma\right)$.

To make all this explicit also in the computer science sense, one
needs to identify the quotients $\nicefrac{\Lambda}{\Lambda\left(I\right)}$.
This is carried out using the Strong Approximation Theorem. When $I$
is a prime ideal of $\mathcal{O}_{S}$, we get that $\nicefrac{\mathcal{O}_{S}}{I}$
is a finite field of order $q^{e}$ for some $e$. The group $\nicefrac{\Lambda}{\Lambda\left(I\right)}$
is then a subgroup of $\mathrm{PGL}_{d}\left(\mathbb{F}_{q^{e}}\right)$
containing $\mathrm{PSL}_{d}\left(\mathbb{F}_{q^{e}}\right)$. Various
choices of ideals $I$ can be made to make sure that any of the subgroups
$H$ between $\mathrm{PSL}_{d}\left(\mathbb{F}_{q^{e}}\right)$ and
$\mathrm{PGL}_{d}\left(\mathbb{F}_{q^{e}}\right)$ can occur. Note
that the quotient $\nicefrac{\mathrm{PGL}_{d}\left(\mathbb{F}_{q^{e}}\right)}{\mathrm{PSL}_{d}\left(\mathbb{F}_{q^{e}}\right)}$
is a cyclic group of order dividing $d$. The resulting graphs are
therefore $t$-partite for some $t\mid d$, just as in case $d=2$
where we have had bi-partite and non-bipartite. We skip the technical
details and give only a corollary (see Theorem 1.1 and Algorithm 9.2
in \cite{Lubotzky2005b}).
\begin{thm}
Let $q$ be a given prime power, $d\geq2$ and $e\geq1$. Assume $q^{e}\geq4d^{2}$.
Every subgroup $H$, with $\mathrm{PSL}_{d}\left(\mathbb{F}_{q^{e}}\right)\leq H\leq\mathrm{PGL}_{d}\left(\mathbb{F}_{q^{e}}\right)$,
has an (explicit) set $\Sigma$ of $\left[\begin{smallmatrix}d\\
1
\end{smallmatrix}\right]_{q}+\ldots+\left[\begin{smallmatrix}d\\
d-1
\end{smallmatrix}\right]_{q}$ generators, such that the Cayley complex of $H$ w.r.t.\ $\Sigma$
is a Ramanujan complex covered by $\mathcal{B}_{d}\left(F\right)$
when $F=\mathbb{F}_{q}\left(\left(y\right)\right)$.
\end{thm}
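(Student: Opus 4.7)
The plan is to combine the Cartwright--Steger lattice, strong approximation, and Theorem~\ref{thm:congruence-ramanujan-complex} to produce, for every target subgroup $H$, an arithmetic congruence subgroup whose quotient is exactly the desired Cayley complex.

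First I would recall the set-up from the preceding construction: the Cartwright--Steger lattice $\Lambda$ acts simply transitively on $\mathcal{B}_d^0$, so choosing the initial vertex $x_0$ identifies $\mathcal{B}_d^0$ with $\Lambda$ and the $1$-skeleton with $\mathrm{Cay}(\Lambda;\Sigma)$, where $\Sigma=\bigsqcup_{i=1}^{d-1}\Sigma_i$ with $|\Sigma_i|=\bigl[\begin{smallmatrix}d\\i\end{smallmatrix}\bigr]_q$. Because $\mathcal{B}_d$ is a clique complex, the full simplicial structure is determined by this $1$-skeleton. Hence for any finite-index normal subgroup $N\triangleleft\Lambda$ which still acts freely on the simplices meeting $x_0$, the quotient $N\backslash\mathcal{B}_d$ is the Cayley complex of $\Lambda/N$ with respect to the image of $\Sigma$, and this image has the same cardinality as $\Sigma$.

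Next I would realize every prescribed $H$ as such a quotient. Given $q$, $d$, $e$, pick a prime ideal $I\triangleleft\mathcal{O}_S=\mathbb{F}_q[\tfrac{1}{1+y},y,\tfrac{1}{y}]$ whose residue field is $\mathbb{F}_{q^e}$. By the Strong Approximation Theorem applied to the simply connected cover of $\utilde{G}$, the reduction map sends $\Lambda$ onto a subgroup sandwiched between $\mathrm{PSL}_d(\mathbb{F}_{q^e})$ and $\mathrm{PGL}_d(\mathbb{F}_{q^e})$, and the exact image is controlled by $\nu_I(\det)$ on the generators $\overline{b_u}$, i.e.\ by the way the valuation $\nu_I$ interacts with the cyclic quotient $\mathrm{PGL}_d/\mathrm{PSL}_d\cong\mu_d$. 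By refining the choice of $I$ (and, if necessary, intersecting $\Lambda(I)$ with a subgroup cut out by a character on this cyclic quotient), one can arrange $\Lambda/N = H$ for any prescribed $H$ in the allowed range. This is essentially a bookkeeping step: identify $\nu_I(1+y),\nu_I(y)\pmod d$ and choose $I$ so that the image of $\det\circ\overline{b_u}$ in $\mathbb{F}_{q^e}^\times/(\mathbb{F}_{q^e}^\times)^d$ generates exactly the cyclic subgroup corresponding to $H$.

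Now apply Theorem~\ref{thm:congruence-ramanujan-complex}: since $N$ contains a congruence subgroup $\Lambda(I)$ of the arithmetic lattice $\Gamma=\utilde{G}(\mathcal{O}_S)$ (we are in the prime-ideal case of that theorem, so the warning about composite degrees does not apply), the complex $N\backslash\mathcal{B}_d$ is Ramanujan. Combined with the first paragraph, this gives the desired Cayley-complex description for $H$ with the explicit generating set $\Sigma$.

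The main obstacle, which is where the hypothesis $q^e\geq 4d^2$ enters, is verifying that $N$ really acts freely on the star of $x_0$ so that (i) the images $\overline{b_u}$ remain pairwise distinct in $H$, (ii) no $\overline{b_u}$ becomes the identity, and (iii) the Cayley complex of $H$ with respect to these images coincides with $N\backslash\mathcal{B}_d$ as a clique complex. Equivalently, one must show the quotient map $\mathcal{B}_d\to N\backslash\mathcal{B}_d$ is injective on balls of radius at least $2$ around every vertex. This is a standard but delicate estimate: it is carried out by bounding, via the explicit description of $\Sigma$ inside $\mathrm{GL}_d(\mathbb{F}_{q^e})$, the minimum nontrivial order of a matrix of the form $\gamma_1\gamma_2^{-1}$ with $\gamma_i\in\Sigma\cup\{e\}$ that can reduce to the identity mod $I$, and showing that $q^e\geq 4d^2$ forces this reduction to be impossible (cf.\ the algorithmic argument underlying \cite{Lubotzky2005b}). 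Once this local-isomorphism bound is in place, all three conclusions follow, and the theorem is proved.
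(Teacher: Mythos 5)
Your proposal reconstructs the argument along the same lines the paper sketches: simply transitive Cartwright--Steger lattice $\Lambda$, Cayley-graph identification of the $1$-skeleton, Strong Approximation to control $\Lambda/\Lambda(I)$, Theorem~\ref{thm:congruence-ramanujan-complex} for the Ramanujan property, and the clique-complex comparison as the place where $q^e\ge 4d^2$ is used. The paper itself does not give a detailed proof here (it defers to \cite[Thm.~1.1 and Alg.~9.2]{Lubotzky2005b}), so your account is a reasonable fleshing-out, and the identification of where the hypothesis enters --- local injectivity of $\mathcal{B}_d\to\Lambda(I)\backslash\mathcal{B}_d$ so that the clique complex of $\mathrm{Cay}(\Lambda/\Lambda(I);\Sigma)$ really equals the quotient complex --- is the right one.

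One step as written is incorrect, though: the parenthetical ``and, if necessary, intersecting $\Lambda(I)$ with a subgroup cut out by a character on this cyclic quotient.'' If the reduction map $r:\Lambda\to\mathrm{PGL}_d(\mathbb{F}_{q^e})$ has image $H'$ and $\Lambda(I)=\ker r$, then taking $N\subsetneq\Lambda(I)$ makes $\Lambda/N$ \emph{larger} than $H'$, not equal to some $H\le H'$; while taking $N\supsetneq\Lambda(I)$ forces $\Lambda/N$ to be a proper quotient of $H'$, which does not contain $\mathrm{PSL}_d$ as a subgroup once you kill anything nontrivial (since $\mathrm{PSL}_d$ is simple and $H'/\mathrm{PSL}_d$ is cyclic, there is no normal complement). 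Neither operation produces the prescribed $H$. The only mechanism available --- and the one the paper and \cite{Lubotzky2005b} actually use --- is to vary the prime ideal $I$ itself so that the image of $\Lambda$ mod $I$ is exactly $H$; this works by a Chebotarev-type argument on how the reduced norms of the generators $\overline{b_u}$ sit inside $\mathbb{F}_{q^e}^\times/(\mathbb{F}_{q^e}^\times)^d$. Your primary clause (``refining the choice of $I$'') is what carries the proof; you should drop the fallback clause, since it points in the wrong direction.

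A minor terminological nit: $\mathrm{PGL}_d(\mathbb{F}_{q^e})/\mathrm{PSL}_d(\mathbb{F}_{q^e})$ is cyclic of order $\gcd(d,q^e-1)$, not necessarily of order $d$, so the identification with $\mu_d$ should be replaced by $\mathbb{F}_{q^e}^\times/(\mathbb{F}_{q^e}^\times)^d$. Likewise ``$\nu_I(\det)$'' and ``minimum nontrivial order of a matrix'' are not quite the right quantities; what is being controlled is the image of the reduced norm of $b_u$ modulo $d$-th powers, and the non-vanishing mod~$I$ of the nontrivial entries of $\gamma_1\gamma_2^{-1}-1$, respectively. These are phrasing issues, not gaps in the argument.
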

We mention in passing that the construction in this subsection is
of interest even for $d=2$ (in spite of the fact that we have already
seen other constructions of Ramanujan graphs in the previous chapter)
since for $d=2$, $\nicefrac{\mathbb{F}_{q^{2}}^{\times}}{\mathbb{F}_{q}^{\times}}$
acts transitively on \emph{all }the $q+1$ neighbors of the standard
lattice. From this one can deduce that the resulting Ramanujan graphs
are edge transitive and not merely vertex transitive, as is always
the case for Cayley graphs. This extra symmetry plays a crucial role
in an application to the theory of error correcting codes (see \cite{kaufman2011edge}).

We hope that the higher Ramanujan complexes will also bear some fruits
in combinatorics like their one dimensional counterparts. For first
steps in this direction see \cite{lubotzky2007moore} and \cite{Kaufman2013}.

\section{High dimensional expanders}

In Definition \ref{def:expander} we presented the definition of expanding
graphs. In recent years several suggestions have been proposed as
to what should be the ``right'' definition of ``expander'' for
higher dimensional simplicial complexes. In this chapter we will bring
some of these as well as few results about the relations between them.
This area is still in its primal state, and we can expect more developments.
The importance of expanding graphs suggests that studying expanding
simplicial complexes will also turn out to be very fruitful.

\subsection{\label{sub:Simplicial-complexes-and-cohom}Simplicial complexes and
cohomology}

A finite simplicial complex $X$ is a finite collection of subsets
of a set $X^{\left(0\right)}$, called the set of vertices of $X$,
which is closed under taking subsets. The sets in $X$ are called
\emph{simplices} or \emph{faces} and we denote by $X^{\left(i\right)}$
the set of simplices of $X$ of dimension $i$, which are the sets
of $X$ of size $i+1$. So $X^{\left(-1\right)}$ is comprised of
the empty set, $X^{\left(0\right)}$ - of the vertices, $X^{\left(1\right)}$
- the edges, $X^{\left(2\right)}$ - the triangles, etc. Throughout
this discussion we will assume that $X^{\left(0\right)}=\left\{ v_{1},\ldots,v_{n}\right\} $
is the set of vertices and we fix the order $v_{1}<v_{2}<\ldots<v_{n}$
among the vertices. Now, if $F\in X^{\left(i\right)}$ we write $F=\left\{ v_{j_{0}},\ldots,v_{j_{i}}\right\} $
with $v_{j_{0}}<v_{j_{1}}<\ldots<v_{j_{i}}$. If $G\in X^{\left(i-1\right)}$,
we denote the \emph{oriented incidence number} $\left[F:G\right]$
by $\left(-1\right)^{\ell}$ if $F\backslash G=\left\{ v_{j_{\ell}}\right\} $
and $0$ if $G\nsubseteq F$. In particular for every vertex $v\in X^{\left(0\right)}$
and for the unique face $\varnothing\in X^{\left(-1\right)}$, $\left[v:\varnothing\right]=1$.

If $\mathbb{F}$ is a field then $C^{i}\left(X,\mathbb{F}\right)$
is the $\mathbb{F}$-vector space of the functions from $X^{\left(i\right)}$
to $\mathbb{F}$. This is a vectors space of dimension $\left|X^{\left(i\right)}\right|$
over $\mathbb{F}$ where the characteristic functions $\left\{ e_{F}\,\middle|\, F\in X^{\left(i\right)}\right\} $
serve as a basis.

The coboundary map $\delta_{i}:C^{i}\left(X,\mathbb{F}\right)\rightarrow C^{i+1}\left(X,\mathbb{F}\right)$
is given by:
\[
\left(\delta_{i}f\right)\left(F\right)=\sum_{G\in X^{\left(i\right)}}\left[F:G\right]f\left(G\right)
\]
so if $f=e_{G}$ for some $G\in X^{\left(i\right)}$, $\delta_{i}e_{G}$
is a sum of all the simplices of dimension $i+1$ containing $G$
with signs $\pm1$ according to the relative orientations.

It is well known and easy to prove that $\delta_{i}\circ\delta_{i-1}=0$.
Thus $B^{i}\left(X,\mathbb{F}\right)=\im\delta_{i-1}$ - ``the space
of $i$-coboundaries'' is contained in $Z^{i}\left(X,\mathbb{F}\right)=\ker\delta_{i}$
- the $i$-cocycles and the quotient $H^{i}\left(X,\mathbb{F}\right)=\nicefrac{Z^{i}\left(X,\mathbb{F}\right)}{B^{i}\left(X,\mathbb{F}\right)}$
is the $i$-th cohomology group of $X$ over $\mathbb{F}$.

In a dual way one can look at $C_{i}\left(X,\mathbb{F}\right)$ -
the $\mathbb{F}$-vector space spanned by the simplices of dimension
$i$. Let $\partial_{i}:C_{i}\left(X,\mathbb{F}\right)\rightarrow C_{i-1}\left(X,\mathbb{F}\right)$
be the boundary map defined on the basis element $F$ by: $\partial F=\sum_{G\in X^{\left(i-1\right)}}\left[F:G\right]\cdot G$,
i.e.\ if $F=\left\{ v_{j_{0}},\ldots,v_{j_{i}}\right\} $ then $\partial_{i}F=\sum_{t=0}^{i}\left(-1\right)^{t}\left\{ v_{j_{0}},\ldots,\widehat{v_{j_{t}}},\ldots,v_{j_{i}}\right\} $.
Again $\partial_{i}\circ\partial_{i+1}=0$ and so the boundaries $B_{i}\left(X,\mathbb{F}\right)=\im\partial_{i+1}$
are inside the cycles $Z_{i}\left(X,\mathbb{F}\right)=\ker\partial_{i}$
and $H_{i}\left(X,\mathbb{F}\right)=\nicefrac{Z_{i}\left(X,\mathbb{F}\right)}{B_{i}\left(X,\mathbb{F}\right)}$
gives the $i$-th homology group of $X$ over $\mathbb{F}$. As $\mathbb{F}$
is a field, it is not difficult in this case to show that $H_{i}\left(X,\mathbb{F}\right)\simeq H^{i}\left(X,\mathbb{F}\right)$.
Sometimes, it is convenient to identify $C_{i}\left(X,\mathbb{F}\right)$
and $C^{i}\left(X,\mathbb{F}\right)$ by assigning $F$ to $e_{F}$.

The $i$-th laplacian of $X$ over $F$ is defined as the linear operator
$\Delta_{i}:C^{i}\left(X,\mathbb{F}\right)\rightarrow C^{i}\left(X,\mathbb{F}\right)$
given by $\Delta_{i}=\partial_{i+1}\delta_{i}+\delta_{i-1}\partial_{i}$.
The operator $\partial_{i+1}\delta_{i}$ is sometimes denoted (for
clear reasons!) $\Delta_{i}^{up}$, while $\Delta_{i}^{down}=\delta_{i-1}\partial_{i}$.
In fact, $\partial_{i+1}$ is the dual of $\delta_{i}$ and so the
eigenvalues of $\Delta_{i}^{up}$ and $\Delta_{i+1}^{down}$ differ
only by the multiplicity of zero. Note that what is customarily called
the laplacian of a graph is actually the upper $0$-laplacian: 
\[
\Delta_{0}^{up}f\left(x\right)=\deg\left(x\right)f\left(x\right)-\sum_{y\sim x}f\left(y\right).
\]

\subsection{$\mathbb{F}_{2}$-coboundary expansion}

It seems that the first definition of higher dimensional expansion
was given by Linial-Meshulam \cite{Linial2006}, Meshulam-Wallach
\cite{meshulam2009homological} and Gromov \cite{Gromov2010} (see
also \cite{dotterrer2010coboundary,Gundert2012,mukherjee2012cheeger,newman2012multiplicative})
as follows
\begin{defn}
For a simplicial complex $X$, the \emph{$\mathbb{F}_{2}$-coboundary
expansion of $X$ in dimension $i$} is 
\[
\mathcal{E}_{i}\left(X\right)=\min\left\{ \dfrac{\left\Vert \delta_{i-1}f\right\Vert }{\left\Vert \left[f\right]\right\Vert }\,\middle|\, f\in C^{i-1}\left(X,\mathbb{F}_{2}\right)\backslash B^{i-1}\left(X,\mathbb{F}_{2}\right)\right\} .
\]

\end{defn}
In other papers this notion is referred to as ``cohomological expansion'',
``coboundary expansion'', or ``combinatorial expansion''. Let
us explain the notation here: $\mathbb{F}_{2}$ is the field of order
two, for $f\in C^{i-1}$ (and similarly for $\delta f\in C^{i}$),
$\left\Vert f\right\Vert $ is simply the number of $\left(i-1\right)$-simplices
$F$ for which $f\left(F\right)\neq0$. Finally, $\left[f\right]$
is the coset $f+B^{i-1}\left(X,\mathbb{F}_{2}\right)$ and 
\[
\left\Vert \left[f\right]\right\Vert =\min\left\{ \left\Vert g\right\Vert \,\middle|\, g\in\left[f\right]\right\} =\min\left\{ \left\Vert f+\delta_{i-2}h\right\Vert \,\middle|\, h\in C^{i-2}\left(X,\mathbb{F}_{2}\right)\right\} .
\]
One can see that $\left\Vert \left[f\right]\right\Vert $ is the minimal
distance of $f$ from $B^{i-1}\left(X,\mathbb{F}_{2}\right)$ in the
Hamming metric, and in particular that $\left\Vert \left[f\right]\right\Vert =0$
iff $f\in B^{i-1}\left(X,\mathbb{F}_{2}\right)$. 

Some authors prefer to normalize the expansion as follows:
\[
\widetilde{\mathcal{E}}_{i}\left(X\right)=\min\left\{ \dfrac{\nicefrac{\left\Vert \delta_{i-1}f\right\Vert }{\left|X^{\left(i\right)}\right|}}{\nicefrac{\left\Vert \left[f\right]\right\Vert }{\left|X^{\left(i-1\right)}\right|}}\,\middle|\, f\in C^{i-1}\left(X,\mathbb{F}_{2}\right)\backslash B^{i-1}\left(X,\mathbb{F}_{2}\right)\right\} .
\]

Let us explain why this artificially looking definition exactly gives
expander graphs in the one dimensional case: If $X$ is a graph then
$B^{0}=\im\delta_{-1}$ is the one dimensional space containing two
functions, the zero function $0$ and the constant function $\mathbbm{1}$
on all the vertices of $X$. Now, if $f\in C^{0}\left(X,\mathbb{F}_{2}\right)$
then $f$ is nothing more than the characteristic function $\chi_{A}$
of some subset $A\subseteq X^{\left(0\right)}$, in which case $\left[f\right]=f+B^{0}\left(X,\mathbb{F}_{2}\right)=\left\{ \chi_{A},\chi_{\overline{A}}\right\} $
where $\overline{A}$ is the complement of $A$ in $X^{\left(0\right)}$.
Thus $\left\Vert \left[f\right]\right\Vert =\min\left(\left|A\right|,\left|\overline{A}\right|\right)$.
Finally $\left\Vert \delta f\right\Vert $ is nothing more than the
size of $E\left(A,\overline{A}\right)$, i.e., the set of edges between
$A$ and $\overline{A}$. We can now see that the $\mathbb{F}_{2}$-coboundary
expansion $\mathcal{E}_{1}\left(X\right)$ (which is the only relevant
dimension in this case) is exactly $\overline{h}\left(X\right)$ as
in Remark \ref{rem:old-cheeger}.

Very few results have been proven so far about this concept. Here
is one of them (see \cite{meshulam2009homological,Gromov2010})
\begin{prop}
The complete complex $\Delta_{\left[n-1\right]}$, the simplicial
complex on $n$ vertices where every subset is a face, has $\mathbb{F}_{2}$-coboundary
expansion $\mathcal{E}_{i}\left(\Delta_{\left[n-1\right]}\right)\geq\frac{n}{i+1}$,
$1\leq i\leq n-1$. Equivalently, $\widetilde{\mathcal{E}}_{i}\left(\Delta_{\left[n-1\right]}\right)\geq1$
(in fact, it converges to $1$ when $n$ grows to $\infty$).\end{prop}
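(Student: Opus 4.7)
The plan is to use a cone (contracting homotopy) argument, which is the Meshulam--Wallach technique. For any vertex $v$ of $X=\Delta_{[n-1]}$ define a ``cone'' operator $C_v : C^{j}(X,\mathbb{F}_2) \rightarrow C^{j-1}(X,\mathbb{F}_2)$ by
\[
(C_v g)(\sigma) \;=\; \begin{cases} g(\{v\}\cup\sigma) & \text{if } v\notin\sigma,\\ 0 & \text{if } v\in\sigma.\end{cases}
\]
The only place where the hypothesis ``$X$ is the full complex'' enters is to guarantee that $\{v\}\cup\sigma$ is automatically a face of $X$, so that $C_v$ is well defined on \emph{all} of $C^j$.

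The first step is to establish the cone identity
\[
\delta_{j-2}\, C_v \;+\; C_v\,\delta_{j-1} \;=\; \mathrm{id}_{C^{j-1}(X,\mathbb{F}_2)}.
\]
I would verify this by evaluating both sides on an arbitrary $\sigma\in X^{(j-1)}$ and splitting into the two cases $v\notin\sigma$ and $v\in\sigma$. Over $\mathbb{F}_2$ the orientation signs $[F:G]$ all collapse to $1$, so the two cases reduce to tiny direct calculations: in the first, the ``$v$-summand'' of $\delta f(\{v\}\cup\sigma)$ produces $f(\sigma)$ while the remaining summands reassemble as $(\delta_{j-2}C_v f)(\sigma)$; in the second, only the $w=v$ term in $\delta_{j-2}C_v f$ survives, and it already equals $f(\sigma)$. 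This is the familiar fact that coning over a vertex kills (co)homology.

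With the identity in hand, take any $f\in C^{i-1}(X,\mathbb{F}_2)\setminus B^{i-1}$ and set $g=\delta_{i-1}f$. Applying the cone identity (with $j=i$) to $f$ gives $f=\delta_{i-2}(C_v f)+C_v g$, so $C_v g\in f+B^{i-1}=[f]$ for \emph{every} choice of $v$, whence $\|[f]\|\leq\|C_v g\|$. Note $\|C_v g\|$ is simply the number of $(i-1)$-faces $\sigma\not\ni v$ whose cone $\{v\}\cup\sigma$ lies in the support of $g$.

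The last step is a double-counting: each $i$-face $\tau\in\mathrm{supp}(g)$ can be written as $\{v\}\cup\sigma$ in exactly $i+1$ ways, one for each vertex $v\in\tau$, so
\[
\sum_{v\in X^{(0)}} \|C_v g\| \;=\; (i+1)\,\|g\|.
\]
Averaging over the $n$ vertices, some $v$ realises $\|C_v g\|\leq\tfrac{i+1}{n}\|g\|$, and combining with the previous paragraph yields $\|\delta_{i-1}f\|/\|[f]\|\geq n/(i+1)$, which is $\mathcal{E}_i(\Delta_{[n-1]})\geq n/(i+1)$. The normalised version follows because for the full complex $|X^{(i-1)}|/|X^{(i)}|=(i+1)/(n-i)$, giving $\widetilde{\mathcal{E}}_i\geq n/(n-i)\to 1$. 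The only real obstacle is the cone identity; once the contracting homotopy is available, the rest is essentially a one-line averaging.
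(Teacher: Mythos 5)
Your proof is correct, and it is precisely the contracting-homotopy (``cone'') argument that underlies the references the paper cites (Meshulam--Wallach, Gromov); the paper itself gives no proof. I checked each step: the cone identity $\delta_{j-2}C_v + C_v\delta_{j-1} = \mathrm{id}$ holds over $\mathbb{F}_2$ by the two-case computation you sketch, it immediately gives $C_v(\delta_{i-1}f)\in [f]$ and hence $\|[f]\|\leq\|C_v\delta_{i-1}f\|$ for every $v$, and the double count $\sum_v\|C_v g\|=(i+1)\|g\|$ together with averaging delivers $\mathcal{E}_i\geq n/(i+1)$. The normalised bound $\widetilde{\mathcal{E}}_i\geq n/(n-i)\geq 1$ then follows from $|X^{(i-1)}|/|X^{(i)}|=\binom{n}{i}/\binom{n}{i+1}=(i+1)/(n-i)$. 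One small remark: the parenthetical claim that $\widetilde{\mathcal{E}}_i\to 1$ as $n\to\infty$ also requires a matching upper bound, which your argument does not give (and is not needed for the two stated inequalities); it is obtained, for instance, by testing against $f=e_G$ for a fixed $(i-1)$-face $G$, which has $\|[f]\|=1$ and $\|\delta_{i-1}f\|=n-i$, so $\widetilde{\mathcal{E}}_i\leq\frac{(n-i)\cdot(i+1)}{1\cdot(n-i)}\cdot\frac{1}{1}$ --- wait, more carefully $\widetilde{\mathcal{E}}_i\leq\frac{(n-i)/\binom{n}{i+1}}{1/\binom{n}{i}}=\frac{(n-i)(i+1)}{n-i}=i+1$; a better test cochain is needed to get the asymptotically sharp upper bound, but this is outside what the proposition formally asserts.
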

\begin{rem}
\label{rem:cohom-expansion}One should note that $X$ has positive
$\mathbb{F}_{2}$-coboundary expansion in dimension $i$ if and only
if $H^{i-1}\left(X,\mathbb{F}_{2}\right)=0$: If $Z^{i-1}\left(X,\mathbb{F}_{2}\right)=B^{i-1}\left(X,\mathbb{F}_{2}\right)$
then $\delta f\neq0$ for every $f\in C^{i-1}\backslash B^{i-1}$,
while if $f\in Z^{i-1}\left(X,\mathbb{F}_{2}\right)\backslash B^{i-1}\left(X,\mathbb{F}_{2}\right)$
then $\delta f=0$ and $\left\Vert \left[f\right]\right\Vert \neq0$.
This vanishing of $H^{d-1}\left(X,\mathbb{F}_{2}\right)$ in the graph
case, $d=1$, is the vanishing of $H^{0}\left(X,\mathbb{F}_{2}\right)$
which exactly means that the graph $X$ is connected. Indeed, it is
clear that an $\varepsilon$-expander graph is connected.
\end{rem}
Most of the known results on coboundary expansion refer to complexes
$X$ of dimension $d$ whose $d-1$ skeleton is complete (i.e.\ every
subset of $X^{\left(0\right)}$ of size $d$ is a face in the complex).
See \cite{Linial2006,meshulam2009homological,Gromov2010,dotterrer2010coboundary,wagner2011minors,Gundert2012}
for various results, mainly on random complexes. 

As far as we know there is no known family of higher dimensional $\mathbb{Z}_{2}$-coboundary
expanders of \emph{bounded} degree (i.e.\ where the number of faces
is linear in the number of vertices). It is natural to suggest that
the Ramanujan complexes of §2 (and even more generally, all finite
quotients of higher dimensional Bruhat-Tits buildings of simple groups
of rank $\geq2$ over local fields) are such. But this is not the
case in general. For example, let $\Gamma$ be any cocompact lattice
in $\mathrm{PGL}_{3}\left(F\right)$ where $F$ is a local field and
assume $\nicefrac{\Gamma}{\left[\Gamma,\Gamma\right]\Gamma^{2}}$
is non-trivial (i.e.\ $\Gamma$ has a non-trivial abelian quotient
of $2$-power order - by \cite{lubotzky1987finite} every lattice
has such a sublattice of finite index) then $H^{1}\left(\Gamma\backslash\mathcal{B},\mathbb{F}_{2}\right)\neq0$
(since $\mathcal{B}$ - the Bruhat-Tits building of $\mathrm{PGL}_{3}\left(F\right)$
is contractible) and so by Remark \ref{rem:cohom-expansion} the $\mathbb{F}_{2}$-coboundary
expansion of $X=\Gamma\backslash\mathcal{B}$ in dimension $2$ is
$0$. It might be that the vanishing of the cohomology is the only
obstruction. 

Another possible way to circumvent this is to use instead the notion
of Gromov of ``filling'': The filling of $X$ (in dimension $i$)
is 
\[
\nu_{i}\left(X\right)=\max\left\{ \frac{\left\Vert f+Z^{i-1}\right\Vert }{\left\Vert \delta_{i-1}f\right\Vert }\,\middle|\, f\in C^{i-1}\left(X,\mathbb{F}_{2}\right)\backslash Z^{i-1}\left(X,\mathbb{F}_{2}\right)\right\} .
\]
When $H^{i-1}\left(X,\mathbb{F}_{2}\right)$ vanishes, the filling
and the $\mathbb{F}_{2}$-coboundary expansion are related by $\nu_{i}\left(X\right)=\frac{1}{\mathcal{E}_{i}\left(X\right)}=\left[\min\limits _{f\in C^{i-1}\backslash B^{i-1}}\frac{\left\Vert \delta_{i-1}f\right\Vert }{\left\Vert f+B^{i-1}\right\Vert }\right]^{-1}$.
When $H^{i-1}\left(X,\mathbb{F}_{2}\right)$ does not vanish, $\mathcal{E}_{i}\left(X\right)$
is zero (see \ref{rem:cohom-expansion}), but $\nu_{i}\left(x\right)$
is always finite since $\left\Vert \delta_{i-1}f\right\Vert \neq0$
for $f\notin Z^{i-1}$. For example, the Cheeger constant $\overline{h}$
vanishes for a disconnected graph, while $\frac{1}{\nu_{1}\left(x\right)}$
is the \emph{mediant} (or ``freshman sum'') of the Cheeger constants
of the connected components of the graph, and it is always positive.
We present the following conjecture:
\begin{conjecture}
Let $\mathcal{B}$ be the Bruhat-Tits building associated with $\mathrm{PGL}_{d}\left(F\right)$,
$F$ a local field and $d\geq3$. There exists a constant $\nu=\nu\left(d,F\right)$
such that $\nu_{i}\left(X\right)\leq\nu$ for every finite quotient
$X$ of $\mathcal{B}$.
\end{conjecture}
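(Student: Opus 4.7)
The conjecture asks for a uniform bound on the filling $\nu_{i}(X)$ across all finite quotients $X=\Gamma\backslash\mathcal{B}_{d}(F)$. The plan is a local-to-global argument exploiting the fact that links of faces in $\mathcal{B}_{d}(F)$ are all (joins of) spherical buildings over $\mathbb{F}_{q}$, whose structure depends only on $(d,F)$ and not on $\Gamma$.

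First I would reformulate the quantity: bounding $\nu_{i}(X)\leq\nu$ is equivalent to requiring that for every $f\in C^{i-1}(X,\mathbb{F}_{2})\setminus Z^{i-1}$ there exists $g\in C^{i-1}$ with $\delta_{i-1}g=\delta_{i-1}f$ and $\|g\|\leq\nu\,\|\delta_{i-1}f\|$, because then $f-g\in Z^{i-1}$ witnesses $\|f+Z^{i-1}\|\leq\|g\|$. So the task becomes an isoperimetric problem for $\delta_{i-1}$: every coboundary $h\in B^{i}$ must admit a preimage of $\ell_{0}$-norm at most $\nu\,\|h\|$. Crucially this formulation remains meaningful when $H^{i-1}(X,\mathbb{F}_{2})$ is non-trivial, which is precisely why the filling $\nu_{i}$, rather than the coboundary expansion $\mathcal{E}_{i}$, is the right object to bound in view of Remark \ref{rem:cohom-expansion}.

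Second, I would analyse the local structure. For every face $\sigma\in X$ the link $\mathrm{lk}_{X}(\sigma)$ is isomorphic to $\mathrm{lk}_{\mathcal{B}}(\widetilde{\sigma})$ for any preimage $\widetilde{\sigma}$, and this is a finite spherical Tits building (or join thereof) over $\mathbb{F}_{q}$. These finite complexes have cohomology concentrated in top degree and admit a filling constant $\nu_{0}=\nu_{0}(d,q)$, computable either directly from the projective geometry of $\mathbb{F}_{q}^{d}$ or estimated via representation theory of $\mathrm{GL}_{n}(\mathbb{F}_{q})$ on the Steinberg module; either way $\nu_{0}$ does not depend on $\Gamma$. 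I would then set up a local-to-global induction in the spirit of Garland and Gromov: given a coboundary $h$ on $X$, iteratively locate a face $\sigma$ whose link carries a large portion of $\mathrm{supp}(h)$, apply the local filling inside $\mathrm{lk}_{X}(\sigma)$ at a cost controlled by $\nu_{0}$, and assemble the modifications into a global preimage $g$. The core combinatorial step is to prove that the cumulative support of $g$ stays $O(\nu_{0}\,\|h\|)$ and that the iteration terminates in a controlled number of rounds.

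The main obstacle, and where I expect the real work to lie, is the mod-$2$ nature of the problem. Unlike classical Garland machinery, where $\mathbb{R}$-valued Laplacian eigenvalues on links lift cleanly to a global spectral gap, the $\mathbb{F}_{2}$-filling is a combinatorial isoperimetric quantity in which cancellations between local fillings can inflate or deflate the global count in unpredictable ways, so the iteration must be engineered to avoid destructive interference. A further complication is the possible non-vanishing of $H^{i-1}(X,\mathbb{F}_{2})$: the iterative process must not drift across cohomology classes, since the filling is measured against $Z^{i-1}$, not $B^{i-1}$. Recent progress on cosystolic expansion of Ramanujan complexes by Kaufman-Kazhdan-Lubotzky and Evra-Kaufman suggests that this program is viable in the arithmetic setting, and extending it uniformly to every cocompact $\Gamma\leq\mathrm{PGL}_{d}(F)$ and every local field $F$ is the principal technical hurdle.
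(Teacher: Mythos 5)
The statement you are addressing is posed as a \emph{conjecture} in the paper, and the paper offers no proof of it: it is stated as an open problem, followed only by a remark that special cases (e.g.\ $d=3$, $q$ large) would already be of interest for coding theory. So there is no argument in the paper to compare yours against, and any honest review must begin from the fact that the conjecture remains unproved in the source.

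Your proposal is also not a proof, and to your credit you say so explicitly. What you do get right: the reformulation of $\nu_{i}(X)\leq\nu$ as an isoperimetric bound on preimages of coboundaries under $\delta_{i-1}$; the observation that it is the filling constant $\nu_{i}$ rather than the coboundary expansion $\mathcal{E}_{i}$ that is the correct quantity to bound when $H^{i-1}(X,\mathbb{F}_{2})$ may be nontrivial (consistent with Remark \ref{rem:cohom-expansion} in the paper); and the observation that the links in any quotient of $\mathcal{B}_{d}(F)$ are spherical buildings over $\mathbb{F}_{q}$ whose combinatorics depend only on $(d,F)$ and not on $\Gamma$, so a local filling constant $\nu_{0}(d,q)$ is available as a uniform resource. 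These are the right starting points and they match the spirit of the surrounding discussion in the paper (Garland's local-to-global method for spectral gaps, Gromov's use of coboundary expansion for the topological overlap property).

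But the decisive step --- an iteration that assembles local $\mathbb{F}_{2}$-fillings in links into a global preimage $g$ with $\|g\|=O(\nu_{0}\|h\|)$, while controlling destructive interference in the mod-$2$ count and without drifting across cohomology classes --- is precisely what you label as \textquotedbl{}where I expect the real work to lie\textquotedbl{} and leave entirely unresolved. There is no lemma, no inductive scheme, no bookkeeping of support overlaps, and no argument that the process terminates with the claimed bound. You have reformulated the problem, identified the relevant local structure, and named the obstructions; that is a research outline, not a proof. Your citations to Kaufman--Kazhdan--Lubotzky and Evra--Kaufman on cosystolic expansion correctly point to the line of subsequent work that eventually realized a version of this program, but pointing at a research direction does not substitute for an argument. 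In short, your framing is sound and compatible with the paper's perspective, but the gap between your sketch and a proof is exactly the content of the conjecture.
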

Even special cases of this conjecture (e.g.\ the case $d=3$ and
$q$, the residue field of $F$, large) are of importance in coding
theory as shown in \cite{Kaufman2013}.

\subsection{The Cheeger constant}

The Cheeger constant $h\left(X\right)$ for a graph $X$ is defined
in Definition \ref{def:Cheeger} above (see also Remark \ref{rem:old-cheeger}
there). One may argue what should be the right definition of $h\left(X\right)$
when $X$ is a higher dimensional simplicial complex. Let us follow
here the definition given in \cite{parzanchevski2012isoperimetric}:
\begin{defn}
\label{def:high-cheeger}For a $d$-dimensional simplicial complex
$X$, denote
\[
h\left(X\right)=\min_{X^{\left(0\right)}=\coprod\limits _{i=0}^{d}A_{i}}\frac{\left|X^{\left(0\right)}\right|\left|F\left(A_{0},\ldots,A_{d}\right)\right|}{\left|A_{0}\right|\cdot\ldots\cdot\left|A_{d}\right|}
\]
where the minimum is over all the partitions of $X^{\left(0\right)}$
into nonempty sets $A_{0},\ldots,A_{d}$ and $F\left(A_{0},\ldots,A_{d}\right)$
denotes the set of $d$-dimensional simplices with exactly one vertex
in each $A_{i}$.
\end{defn}
For $d=1$, it coincides with Definition \ref{def:Cheeger}. But,
in a way, this definition keeps the spirit of the mixing lemma (Proposition
\ref{prop:mixing-lemma}): $h\left(X\right)$ measures the number
of ``edges'' (i.e.\ $d$-faces) ``between'' (i.e.\ with a single
representative in each of) the $A_{i}$. The quantity $\left|F\left(A_{0},\ldots,A_{d}\right)\right|$
is ``normalized'' by multiplying it by $\frac{\left|X^{\left(0\right)}\right|}{\prod_{i=0}^{d}\left|A_{i}\right|}$.
This definition works well when $X$ has a complete $\left(d-1\right)$-skeleton
(see more in §\ref{sub:The-overlap-property}), but as noted in \cite{parzanchevski2012isoperimetric}
it gives zero whenever $X^{\left(d-1\right)}$ is not complete (If
$G=\left\{ v_{0},\ldots,v_{d-1}\right\} \notin X^{\left(d-1\right)}$
take $A_{i}=\left\{ v_{i}\right\} $ for $i=0,\ldots,d-1$ and $A_{d}=X^{\left(0\right)}\backslash G$.
Then $F\left(A_{0},\ldots,A_{d}\right)=\varnothing$).\textbf{ }In
\cite{parzanchevski2012isoperimetric}, the authors call the difference
\begin{equation}
\left|\left|F\left(A_{0},\ldots,A_{d}\right)\right|-\frac{\left|X^{\left(d\right)}\right|\left|A_{0}\right|\cdot\ldots\cdot\left|A_{d}\right|}{{n \choose d+1}}\right|\label{eq:discrepancy}
\end{equation}
the \emph{discrepancy} of $A_{0},\ldots,A_{d}$, and they bound this
value, and the constant $h\left(X\right)$, in terms of the spectrum
of the laplacian. This brings us to our next subject.

\subsection{\label{sub:Spectral-gap}Spectral gap}

In §1 we saw that the notion of expander can be described by means
of the eigenvalues of the adjacency matrix $A$ of the graph. For
a $k$-regular graph $X$, the matrix $A$ is nothing more than $A=kI-\Delta_{0}^{up}$
where $\Delta_{0}^{up}$ is the $0$-dimensional upper laplacian of
$X$ over $\mathbb{F}=\mathbb{R}$ as defined in §\ref{sub:Simplicial-complexes-and-cohom}.
We can translate Theorem \ref{thm:discrete-cheeger} to deduce that
a family of $k$-regular graphs $\left\{ X_{t}\right\} _{t\in I}$
is a family of expanders iff there exists $\varepsilon>0$ such that
every eigenvalue $\lambda$ of $\Delta_{0}^{up}\Big|_{Z_{0}\left(X,\mathbb{R}\right)}=\Delta_{0}\Big|_{Z_{0}\left(X,\mathbb{R}\right)}$
satisfies $\lambda\geq\varepsilon$ (the last is equality is since
$\Delta_{i}^{down}\left(Z_{i}\left(X,\mathbb{R}\right)\right)=\delta_{i-1}\partial_{i}\left(\ker\partial_{i}\right)=0$).
Note that $Z_{0}\left(X,\mathbb{R}\right)=\left\{ f:X^{\left(0\right)}\rightarrow\mathbb{R}\,\middle|\,\sum_{x\in X^{\left(0\right)}}f\left(x\right)=0\right\} $.
It is therefore natural to generalize and to define
\begin{defn}
Let $X$ be a simplicial complex of dimension $d$ and $0\leq i\leq d-1$.
We denote $\lambda_{i}\left(X\right)=\min\Spec\left(\Delta_{i}\Big|_{Z_{i}\left(X,\mathbb{R}\right)}\right)$
and we say that $X$ has spectral gap $\lambda_{i}\left(X\right)$
in dimension $i$. We write $\lambda\left(X\right)$ for $\lambda_{d-1}\left(X\right)$. 
\end{defn}
It is natural to expect that just like in graphs where there is a
direct connection between the Cheeger constant and the spectral gap,
something like that should happen in the higher dimensional case,
but examples presented in \cite{parzanchevski2012isoperimetric} show
that there exist simplicial complexes with $\lambda\left(X\right)=0$
while $h\left(X\right)>0$. The mystery has been revealed recently
in \cite{parzanchevski2012isoperimetric} where it is shown that the
right generalization of the Cheeger inequalities is:
\begin{thm}[\cite{parzanchevski2012isoperimetric}]
\label{thm:high-cheeger}Let $X$ be a finite $d$-dimensional complex
with a complete $\left(d-1\right)$-skeleton. If $k$ is the maximal
degree of a $\left(d-1\right)$-cell, then
\[
\frac{d\left(1-\frac{d-1}{\left|X^{\left(0\right)}\right|}\right)^{2}}{8k}h^{2}\left(X\right)-\left(d-1\right)k\leq\lambda\left(X\right)\leq h\left(X\right).
\]

\end{thm}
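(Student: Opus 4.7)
The plan is to prove the two inequalities separately, by opposite strategies. The upper bound $\lambda(X) \leq h(X)$ is the analog of the easy half of Theorem \ref{thm:discrete-cheeger} and proceeds by the variational principle: starting from a partition $X^{(0)} = A_0 \sqcup \ldots \sqcup A_d$ realizing $h(X)$, I would exhibit a test cycle $f \in Z_{d-1}$ whose Rayleigh quotient equals $h(X)$. Writing $a_i = |A_i|$ and $n = |X^{(0)}|$, a natural candidate, generalizing the classical graph-case $a_1 \chi_{A_0} - a_0 \chi_{A_1}$, is to set $f \in C^{d-1}$ to vanish on every $(d-1)$-face whose vertices do not occupy $d$ distinct blocks, and on a face $\sigma$ with one vertex in each of $A_{i_0}, \ldots, A_{i_{d-1}}$ to take the value $\pm a_{i^{\ast}}$, where $i^{\ast}$ is the missing block index and the sign is the parity of $(i_0, \ldots, i_{d-1})$ versus its increasing reordering. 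A direct alternating-sum check should give $\partial_{d-1} f = 0$, and cancellations force $\delta_{d-1} f$ to be supported on exactly $F(A_0, \ldots, A_d)$, from which the norm identities $\|f\|^2 = n \cdot a_0 \cdots a_d$ and $\|\delta_{d-1} f\|^2 = n^2 \cdot |F(A_0, \ldots, A_d)|$ would follow. The Rayleigh quotient then equals $h(X)$ exactly, yielding $\lambda(X) \leq h(X)$.

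For the lower bound I would take the opposite direction: fix an arbitrary partition and derive an upper bound on its expansion from $\lambda(X)$. The first step is a higher-dimensional mixing lemma in the spirit of Proposition \ref{prop:mixing-lemma}: for any partition $A_0, \ldots, A_d$, bound the discrepancy (the quantity in \eqref{eq:discrepancy}) in terms of $\lambda(X) + (d-1)k$ times a combinatorial factor. This is where the additive $(d-1)k$ enters: one expresses the discrepancy as a Rayleigh-type expression for a test cochain $g$ that is not literally a cycle, decomposes $g = g_Z + g_Z^{\perp}$ with respect to $C^{d-1} = Z_{d-1} \oplus Z_{d-1}^{\perp}$, and combines $\langle \Delta_{d-1} g_Z, g_Z \rangle \geq \lambda(X) \|g_Z\|^2$ with the bound $\|\Delta_{d-1}^{\mathrm{down}}\|_{\mathrm{op}} \leq (d-1)k$ on $C^{d-1}$, which follows since each $(d-1)$-face has $d$ boundary faces each in at most $k$ cofaces. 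The factor $(1 - \tfrac{d-1}{n})^2$ appears when one uses completeness of the $(d-1)$-skeleton to quantify that $g_Z$ captures a large enough fraction of $g$. The final step is a Cauchy--Schwarz passage from the $L^2$ discrepancy bound to an $L^1$ lower bound on $|F(A_0, \ldots, A_d)|$ itself, producing the $h^2/(8k)$ factor in the same way as in the classical Cheeger inequality.

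The main obstacle will be the mixing-lemma step. On graphs one needs to control only a bipartite edge count between $A$ and $V \setminus A$, whereas here one must simultaneously control a $(d+1)$-partite face count $|F(A_0, \ldots, A_d)|$, requiring carefully designed test cochains and delicate bookkeeping of signs and multiplicities in the alternating sums $\delta_{d-1} g$ and $\partial_d(\delta_{d-1} g)$. Completeness of the $(d-1)$-skeleton is essential throughout: it is what makes the expected count $|X^{(d)}| \, a_0 \cdots a_d / \binom{n}{d+1}$ meaningful and what allows the test cochain to be defined on every ordered $d$-tuple. I would expect the bulk of the effort to go into pinning down the exact constants $d$, $k$, and $1 - \tfrac{d-1}{n}$, and in particular into verifying that the degree bound yields the sharp $(d-1)k$ rather than the naive $dk$ --- the saving of one reflects the projection onto the vertex-coboundary directions, and controls the precise form of the correction term in the stated inequality.
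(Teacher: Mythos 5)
The paper itself does not prove this theorem; it states the result and refers the reader to the cited source, so there is no in-paper proof to compare against. Your argument for the upper bound $\lambda(X)\leq h(X)$ is nonetheless correct and matches the source's construction: for a partition $A_0,\ldots,A_d$ one takes $f\in C^{d-1}$ supported on the $(d-1)$-cells meeting exactly $d$ of the blocks, with value $\pm a_{i^*}$ (the size of the missing block, with a sign given by the permutation), checks $\partial_{d-1}f=0$ and the cancellation of $\delta_{d-1}f$ away from $F(A_0,\ldots,A_d)$, and computes $\|f\|^2=n\prod a_i$ and $\|\delta f\|^2=n^2\left|F(A_0,\ldots,A_d)\right|$; for the optimal partition the Rayleigh quotient is exactly $h(X)$.

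Your plan for the other inequality is logically reversed. You propose to ``fix an arbitrary partition and derive an upper bound on its expansion from $\lambda(X)$,'' but no such bound can exist: taking $A_d$ to be a single vertex (and the $A_i$ filling out the rest) gives a partition whose Cheeger ratio $\frac{n\left|F\right|}{\prod a_i}$ is of order $k$ no matter how small $\lambda(X)$ is, so the expansion of an \emph{arbitrary} partition is not controlled by the spectral gap. Moreover, the ingredients you list (a mixing-lemma discrepancy estimate followed by an $L^1$ \emph{lower} bound on $\left|F(A_0,\ldots,A_d)\right|$) would produce a \emph{lower} bound on the expansion of every partition, hence a lower bound on $h(X)$ --- which is already implied by the easy direction $\lambda\leq h$, and is not what is needed. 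The inequality $\lambda(X)\geq\frac{d(1-\frac{d-1}{n})^2}{8k}h^2(X)-(d-1)k$ is the discrete-Cheeger/sweep direction: one must start from a $(d-1)$-cycle with Rayleigh quotient $\lambda(X)$ and \emph{construct} a partition whose expansion is small, not bound arbitrary partitions. Separately, your claimed source of the additive term is wrong: $\Delta_{d-1}^{\mathrm{down}}=\delta_{d-2}\partial_{d-1}$ has the same nonzero spectrum as $\Delta_{d-2}^{\mathrm{up}}$, which is governed by the degree of a $(d-2)$-cell inside the complete $(d-1)$-skeleton, namely $n-(d-1)$; the parameter $k$ only bounds the degree of $(d-1)$-cells in the $d$-skeleton, so $\|\Delta_{d-1}^{\mathrm{down}}\|_{\mathrm{op}}$ is of order $n$, not $(d-1)k$.
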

The reader may note that this Theorem, when specialized to $d=1$,
gives \emph{exactly} Theorem \ref{thm:discrete-cheeger}. 

A similar generalization is obtained in \cite{parzanchevski2012isoperimetric}
for the expander mixing lemma (Proposition \ref{prop:mixing-lemma}
above). Given any two sets of vertices $A,B\subseteq V$, the mixing
lemma for graphs bounds the deviation of $\left|E\left(A,B\right)\right|$
from its expected value in a random $k$-regular graph, in terms of
the spectral invariant $\mu_{0}$. From the perspective of the simplicial
laplacian, $\mu_{0}$ is the spectral radius of $kI-\Delta_{0}\Big|_{Z_{0}\left(X,\mathbb{R}\right)}$,
i.e.\ the maximal absolute value of its eigenvalues. The following
generalization then holds for higher dimensional complexes:
\begin{thm}[\cite{parzanchevski2012isoperimetric}]
\label{thm:high-mixing}Let $X$ be a finite $d$-dimensional complex
with a complete $\left(d-1\right)$-skeleton. Let $k$ be the average
degree of a $\left(d-1\right)$-cell, and define
\[
\mu_{0}\left(X\right)=\max\left\{ \left|\gamma\right|\,\middle|\,\gamma\in\Spec\left(kI-\Delta_{d-1}\Big|_{Z_{d-1}\left(X,\mathbb{R}\right)}\right)\right\} .
\]
Then for every disjoint sets of vertices $A_{0},\ldots,A_{d}$,
\[
\left|\left|F\left(A_{0},\ldots,A_{d}\right)\right|-\frac{k\left|A_{0}\right|\cdot\ldots\cdot\left|A_{d}\right|}{\left|X^{\left(0\right)}\right|}\right|\leq\mu_{0}\left(X\right)\left(\left|A_{0}\right|\cdot\ldots\cdot\left|A_{d}\right|\right)^{\frac{d}{d+1}}.
\]

\end{thm}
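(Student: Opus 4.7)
The plan is to mimic the proof of the classical Expander Mixing Lemma (Proposition~\ref{prop:mixing-lemma}). In the $d=1$ case one writes $|E(A,B)| = \langle (kI - \Delta_0)\chi_A, \chi_B\rangle$, decomposes each indicator as $\chi_{A_i} = \tfrac{|A_i|}{n}\mathbf{1} + \chi_{A_i}^\perp$ with $\chi_{A_i}^\perp \in Z_0(X,\mathbb{R})$ and $\|\chi_{A_i}^\perp\|^2 = |A_i|(1-|A_i|/n)$, extracts the main term $\tfrac{k|A||B|}{n}$ from the mean-mean pairing, and bounds the surviving $\perp$-$\perp$ term by $\mu_0(X)\|\chi_A^\perp\|\|\chi_B^\perp\| \leq \mu_0(X)\sqrt{|A||B|}$ via Cauchy--Schwarz together with the spectral bound on $Z_0$. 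I would implement the same strategy in three steps.

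First, I would seek an identity expressing $|F(A_0,\ldots,A_d)|$ as a multilinear inner product on $C^{d-1}(X,\mathbb{R})$ with kernel $kI - \Delta_{d-1}$. The natural construction promotes $d$ of the vertex-indicators $\chi_{A_i}$ to a $(d-1)$-cochain via a cup-product built from the chosen ordering of $X^{(0)}$, and pairs the result (after applying $\delta_{d-1}$) with a cochain built from the remaining indicator; symmetrizing over the $d+1$ choices of ``distinguished'' index should yield a symmetric expression equal to $|F(A_0,\ldots,A_d)|$. The completeness of the $(d-1)$-skeleton is crucial here, guaranteeing that the coefficients emerging from the cochain calculus match the combinatorial counts.

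Second, substituting $\chi_{A_i} = \tfrac{|A_i|}{n}\mathbf{1} + \chi_{A_i}^\perp$ and expanding multilinearly, the all-mean term should contribute the expected count $\tfrac{k\prod_i|A_i|}{n}$, obtained by relating the average degree $k$ of $(d-1)$-cells to the total number of ordered $d$-simplices in $X$ (again using the complete $(d-1)$-skeleton). Partial-mean cross terms in which some but not all cochains are $\mathbf{1}$'s should vanish, because $\mathbf{1}$ is orthogonal to $Z_0$ while the promoted fluctuation cochains lie in $Z_{d-1}$. Only the all-$\perp$ contribution survives, and it has the form $\langle (kI-\Delta_{d-1})\xi,\eta\rangle$ with $\xi,\eta\in Z_{d-1}$ built from the $\chi_{A_i}^\perp$'s. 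The spectral bound $\|(kI-\Delta_{d-1})|_{Z_{d-1}}\|\leq \mu_0(X)$ and Cauchy--Schwarz then produce an estimate $\mu_0(X)\|\xi\|\|\eta\|$; computing the norms from $\|\chi_{A_i}^\perp\|\leq\sqrt{|A_i|}$ and averaging the $d+1$ estimates coming from the $d+1$ choices of distinguished index via $(d+1)$-variable AM--GM should balance the factors into the symmetric product $\prod_i|A_i|^{d/(d+1)}$ demanded by the statement.

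The hardest part is the first step --- constructing the cochain identity that expresses $|F|$ exactly in terms of $kI-\Delta_{d-1}$. The coboundary $\delta$ is orientation-sensitive and produces signs, whereas $|F|$ is an unsigned count, so the identity requires careful sign management, likely through symmetrization over permutations of $A_0,\ldots,A_d$. Verifying that the partial-mean cross terms actually vanish (so that only the main term and the all-$\perp$ error survive) is where completeness of the $(d-1)$-skeleton is invoked most heavily, and is the most delicate technical point; obtaining the exponent $d/(d+1)$ rather than the naive $1/2$ per factor likewise demands that the AM--GM averaging be set up symmetrically over the $d+1$ indices rather than via a single application of Cauchy--Schwarz.
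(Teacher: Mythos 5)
Your overall strategy --- promote the vertex indicators to $(d-1)$-cochains, expand, and control the error by the spectrum --- is indeed the right blueprint, and the final geometric-mean step over the $d+1$ choices of omitted index is exactly the mechanism that produces the exponent $d/(d+1)$. But there is a genuine gap in the middle of your argument: the claim that the ``all-mean term'' contributes the main count $\tfrac{k\prod_i|A_i|}{n}$ is false. For the signed (antisymmetric/wedge) promotion of $d$ of the indicators to a $(d-1)$-cochain $\phi_j$ --- which is what you need in order to make $\delta\phi_j$ relate to the unsigned count $|F|$ --- the all-mean piece is $\mathbf{1}\wedge\cdots\wedge\mathbf{1}=0$ for every $d\ge 2$, so it contributes nothing. (And even for $d=1$, the mean-mean pairing does not by itself produce $k|A_0||A_1|/n$.) The operator that actually appears when you expand the combinatorial count is $\Delta^{up}_{d-1}=\partial_d\delta_{d-1}$, not $kI-\Delta_{d-1}$; the identity you are looking for is of the shape $|F(A_0,\ldots,A_d)| = \pm\langle\Delta^{up}_{d-1}\phi_j,\phi_\ell\rangle$ (for $j\neq\ell$). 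The mixed terms (one factor $\mathbf{1}$, others $\chi^\perp$) vanish not because of an orthogonality between $\mathbf{1}$ and $Z_0$ ``lifting'', but because $\mathbf{1}\wedge g$ is a coboundary $\pm\delta_{d-2}(\ldots)$ and $\Delta^{up}_{d-1}$ annihilates $B^{d-1}$ since $\delta_{d-1}\delta_{d-2}=0$. After the cross terms and all-mean term drop, the \emph{entire} quantity $|F|$ equals the all-$\perp$ pairing $\langle\Delta^{up}_{d-1}\xi,\eta\rangle$ with $\xi,\eta\in Z_{d-1}$.

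At that point you must split $\Delta^{up}_{d-1} = kI - (kI - \Delta^{up}_{d-1})$ on $Z_{d-1}$: the $kI$-piece is $k\langle\xi,\eta\rangle$, and it is a nontrivial computation (using completeness of the $(d-1)$-skeleton and the relation between $k$ and $|X^{(d)}|$) that this inner product of the fluctuation-wedges produces exactly the main term $\tfrac{k\prod_i|A_i|}{n}$; the remaining piece $\langle(kI-\Delta^{up}_{d-1})\xi,\eta\rangle$ is what the spectral radius $\mu_0$ controls (note $\Delta_{d-1}|_{Z_{d-1}}=\Delta^{up}_{d-1}|_{Z_{d-1}}$, so this matches the definition of $\mu_0$ in the theorem). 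As written, your plan would discover that the all-mean term vanishes, try to bound the whole all-$\perp$ pairing $\langle\Delta^{up}\xi,\eta\rangle$ by $\mu_0\|\xi\|\|\eta\|$ --- which is false, since the spectral bound applies to $kI-\Delta^{up}$, not $\Delta^{up}$ --- and lose the main term entirely. So the skeleton of the argument is salvageable, but you have misidentified where both the main term and the error term come from, and that misidentification would derail the execution.
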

Again, when specializing to $d=1$ this gives the original expander
mixing lemma for graphs, except for the additional assumption that
the sets of vertices are disjoint. The reader is referred to \cite{parzanchevski2012isoperimetric}
for the proofs of Theorems \ref{thm:high-cheeger} and \ref{thm:high-mixing}.
So far, the results are under the assumption of full $\left(d-1\right)$-skeleton
but a work on the general situation is in progress.

It is natural to suggest some extension of Alon-Boppana theorem (Theorem
\ref{thm:Alon-Boppana}) to this high dimensional case (see also Theorem
\ref{thm:Li-AlonBoppana}). In \cite{Parzanchevski} it is shown that
the high dimensional analogue of Alon-Boppana indeed holds in several
interesting cases (for example, for quotients of an infinite complex
with nonzero spectral gap), but that it can also fail. The reader
is referred to that interesting work for more details.

The most important work so far on the spectral gap of complexes is
the seminal work of Garland \cite{Garland1973}. As this work has
been described in many placed (e.g.\ \cite{borel1973cohomologie,Zuk1996,Gundert2012})
we will not elaborate on it here. We just mention that Garland proved
Serre's conjecture that $H^{i}\left(X,\mathbb{R}\right)=0$ for every
$1\leq i\leq d-1$ where $X$ is a finite quotient of the Bruhat-Tits
building of a simple group of rank $d\geq2$ over a local field $\mathbb{F}$.
He did this by proving a bound on the spectral gaps which depends
only $d$ and $\mathbb{F}$ (the $i$-th cohomology group over $\mathbb{R}$
vanishes iff the corresponding spectral gap $\lambda_{i}$ is nonzero).

It is still not clear what is the relation between the coboundary
expansion and the spectral gap. See \cite{Gundert2012,mukherjee2012cheeger}
where some complexes are presented with $\lambda{}_{i}\left(X\right)$
arbitrarily small while $\mathcal{E}_{i}\left(X\right)$ is bounded
away from zero, and also the other way around.

\subsection{\label{sub:The-overlap-property}The overlap property}

An interesting ``overlap'' property for complexes, which is closely
related to expanders, was defined by Gromov \cite{Gromov2010}, and
was further studied in \cite{Fox2011,Matouvsek2011,karasev2012simpler}.
We need first some notation: Let $X$ be a $d$-dimensional simplicial
complex and $\varphi:X^{\left(0\right)}\rightarrow\mathbb{R}^{d}$
an injective map. The map $\varphi$ can be extended uniquely to a
simplicial mapping $\widetilde{\varphi}$ from $X$ (considered now
as a topological space in the obvious way) to $\mathbb{R}^{d}$ (i.e.\ by
extending $\varphi$ affinely to the edges, triangles, etc.) This
will be called a geometric extension. The map $\varphi$ can be extended
in many different ways to a continuous map $\widetilde{\varphi}$
from the topological simplicial complex $X$ to $\mathbb{R}^{d}$,
such $\widetilde{\varphi}$ will be called topological extensions.
\begin{defn}
Let $X$ be a $d$-dimensional simplicial complex and $0<\varepsilon\in\mathbb{R}$.
We say that $X$ has \emph{$\varepsilon$-geometric overlap} (resp.\ \emph{$\varepsilon$-topological
overlap}) if for every injective map $\varphi:X^{\left(0\right)}\rightarrow\mathbb{R}^{d}$
and a geometric (resp.\ topological) extension $\widetilde{\varphi}:X\rightarrow\mathbb{R}^{d}$,
there exists a point $z\in\mathbb{R}^{d}$ such that $\widetilde{\varphi}^{-1}\left(z\right)$
intersects at least $\varepsilon\cdot\left|X^{\left(d\right)}\right|$
of the $d$-dimensional simplices of $X$.
\end{defn}
To digest this definition, let us spell out what does this means for
expander graphs: Let $\varphi:X^{\left(0\right)}\rightarrow\mathbb{R}$
be an injective map and $\widetilde{\varphi}$ any continuous extension
of it to the graph. Let $z\in\mathbb{R}$ be a point such that $\left\lfloor \frac{1}{2}\left|X^{\left(0\right)}\right|\right\rfloor $
of the images of the vertices are above it (and call $L\subseteq X^{\left(0\right)}$
this set of vertices) and the rest are below it. Then $\widetilde{\varphi}^{-1}\left(z\right)$
intersects all the edges of $E\left(L,\overline{L}\right)$ (= the
set of edges going from $L$ to its complement). If $X$ is an $\varepsilon$-expander
$k$-regular graph, then $X^{\left(1\right)}=\frac{\left|X^{\left(0\right)}\right|k}{2}$
while $\left|E\left(L,\overline{L}\right)\right|\geq\frac{\varepsilon}{2}\left|L\right|\approx\frac{\varepsilon}{2}\frac{\left|X^{\left(0\right)}\right|}{2}=\frac{\varepsilon}{2k}\left|X^{\left(1\right)}\right|$.
Thus $X$ has the $\frac{\varepsilon}{2k}$-topological overlapping
property.

The reader should notice however that this property is \emph{not }equivalent
to expander. In fact, it does not even imply that the graph $X$ is
connected. It can be a union of a large expanding graph and a small
connected component. Still, this property captures the nature of expansion
especially in the higher dimensional case.

It is interesting to mention that while it is trivial to prove that
the complete graph is an expander, it is a non-trivial result that
the higher dimensional complete complexes have the overlap property.
This was proved for the geometric overlap in \cite{boros1984number}
for dim $2$ and in \cite{barany1982generalization} for all dimensions.
For the topological overlap, this was proved in \cite{Gromov2010}
(see also \cite{Matouvsek2011,karasev2012simpler}).

The main result of \cite{Fox2011} asserts that there even exist simplicial
complexes of bounded degree with the geometric overlapping property.
They prove it by two methods: probabilistic and constructive. The
constructive examples are the Ramanujan complexes which were discussed
in length in §2 (but under the assumption that $q$ is large enough
w.r.t.\ $d$). In fact, the proof there is valid for all the finite
quotients of $\mathcal{B}=\mathcal{B}\left(\mathrm{PGL}_{d}\left(\mathbb{F}\right)\right)$
and not only to the Ramanujan ones (again assuming $q>>d$). It is
quite likely that the same result holds also for the other Bruhat-Tits
buildings of simple groups of rank $\geq2$.

In all these results the following theorem of Pach plays a crucial
role:
\begin{thm}[\cite{Pach1998}]
For every $d\geq1$, there exists $c_{d}>0$ such that for every
$d+1$ disjoint subsets $P_{1},\ldots,P_{d+1}$ of $n$ points in
general position in $\mathbb{R}^{d}$, there exists $z\in\mathbb{R}^{d}$
and subsets $Q_{i}\subseteq P_{i}$ with $\left|Q_{i}\right|\geq c_{d}\left|P_{i}\right|$
such that every $d$-dimensional simplex with exactly one vertex in
each $Q_{i}$, contains $z$.
\end{thm}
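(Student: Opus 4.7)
The plan is to prove Pach's theorem by a two-stage strategy: first, use iterated Ham-Sandwich cuts to reduce each $P_{i}$ to a large subset $Q_{i}$ so that the enlarged configuration (together with an auxiliary point $z$) becomes combinatorially rigid, and second, deduce from this rigidity that every transversal simplex over the $Q_{i}$'s contains $z$.

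For the first stage I would invoke the Ham-Sandwich theorem: given any $d$ finite point sets in $\mathbb{R}^{d}$, there exists an affine hyperplane simultaneously bisecting each of them. Applied repeatedly, in a prescribed order and retaining a carefully chosen half of each set at each round, this yields (following B\'ar\'any--Valtr and Pach) subsets $Q_{i}\subseteq P_{i}$ with $|Q_{i}|\geq c_{d}|P_{i}|$ for some $c_{d}=c_{d}(d)>0$, together with a distinguished point $z\in\mathbb{R}^{d}$, such that the following \emph{same-type} property holds: for every choice of $v_{i}\in Q_{i}$ and every index $i$, the signs of the oriented volumes of $(v_{1},\ldots,v_{d+1})$ and of $(v_{1},\ldots,v_{i-1},z,v_{i+1},\ldots,v_{d+1})$ depend only on $i$, not on the particular vertices chosen. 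Each round of Ham-Sandwich costs a factor of $2$ in the cardinality of the sets being bisected, and a number of rounds depending only on $d$ suffices, giving a bound of the form $c_{d}=2^{-O(d^{2})}$.

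For the second stage I would observe that the condition ``$z\in\operatorname{conv}(v_{1},\ldots,v_{d+1})$'' is equivalent to requiring that, for every index $i$, the sign of the oriented volume of $(v_{1},\ldots,v_{i-1},z,v_{i+1},\ldots,v_{d+1})$ matches the sign of the oriented volume of $(v_{1},\ldots,v_{d+1})$. By the same-type invariance established in the first stage, both of these signs are independent of the particular $v_{j}\in Q_{j}$, so it suffices to verify the matching for a single reference transversal $(v_{1}^{*},\ldots,v_{d+1}^{*})$. Since the first-stage construction can be arranged to start from a centerpoint of the $P_{i}$'s, so that $z$ lies in the interior of $\operatorname{conv}(v_{1}^{*},\ldots,v_{d+1}^{*})$, the required sign equalities hold at the reference, and hence at every transversal. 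This gives $z\in\operatorname{conv}(v_{1},\ldots,v_{d+1})$ for all choices $v_{i}\in Q_{i}$.

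The principal obstacle, and the technical core of Pach's proof, is the first-stage construction: the Ham-Sandwich bisections must be interleaved so as to enforce the full same-type condition on the $(d+1)$-tuples that involve the auxiliary point $z$ (not merely on tuples of original vertices, which would be insufficient to conclude containment of $z$), while simultaneously maintaining a constant-fraction lower bound on the cardinality of each $Q_{i}$. Designing a suitable invariant that survives each round and counting the number of rounds required is where the dimension-dependent constant $c_{d}$ is determined. By contrast, the second-stage deduction, once the combinatorial rigidity is in place, is a direct one-line consequence of the invariance of oriented-volume signs.
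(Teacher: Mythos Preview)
The paper does not prove this theorem; it is quoted from \cite{Pach1998} as a known result and then applied, so there is no proof in the paper to compare your proposal against.

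As an independent remark on your sketch: the overall two-stage outline (a same-type reduction via iterated Ham--Sandwich cuts, followed by the observation that containment of $z$ is determined by oriented-volume signs which are now invariant) is indeed the strategy behind Pach's original argument and its later simplifications. One point to tighten: you cannot ``start from a centerpoint of the $P_{i}$'s'' before the same-type reduction and expect it to lie in a reference transversal simplex afterwards; rather, one first performs the same-type reduction on the family $P_{1},\ldots,P_{d+1}$ (no auxiliary point yet), obtaining large $Q_{i}$'s all of whose transversals have the same orientation, and then invokes the colorful Carath\'eodory theorem (or a centerpoint argument applied to the $Q_{i}$'s) to produce a point $z$ lying in at least one transversal simplex --- the same-type property then forces $z$ into all of them. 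With that correction your plan matches the standard proof.
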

Let us show now, following \cite{parzanchevski2012isoperimetric}
how to deduce the geometric overlap property from Pach's theorem and
the mixing lemma, when we have a ``concentration of the spectrum''.
Let $X$ be a $d$-dimensional complex on $n$ vertices, with a complete
$\left(d-1\right)$ skeleton. For an arbitrary injective map $\varphi:X^{\left(0\right)}\rightarrow\mathbb{R}^{d}$
we can divide $\varphi\left(X^{\left(0\right)}\right)$ to $\left(d+1\right)$-disjoint
sets $P_{0},\ldots,P_{d}$, each of order (approximately) $\frac{n}{d+1}$.
By Pach's theorem there is a point $z\in\mathbb{R}^{d}$ and subsets
$Q_{i}\subseteq P_{i}$ of sizes $\left|Q_{i}\right|=\frac{c_{d}n}{d+1}$,
such that $z$ belongs to every $d$-simplex formed by representatives
from $Q_{0},\ldots,Q_{d}$. This means that for the geometric extension
$\widetilde{\varphi}:X\rightarrow\mathbb{R}^{d}$, $\widetilde{\varphi}^{-1}\left(z\right)$
intersects every simplex in $F\left(\varphi^{-1}\left(Q_{0}\right),\ldots,\varphi^{-1}\left(Q_{d}\right)\right)$.
Turning to the mixing lemma (Theorem \ref{thm:high-mixing} above),
if the average degree of a $\left(d-1\right)$-cell in $X$ is $k$,
and $\Spec\Delta_{d-1}\Big|_{Z_{d-1}\left(X,\mathbb{R}\right)}\subseteq\left[k-\varepsilon,k+\varepsilon\right]$,
then
\begin{align*}
\left|F\left(\varphi^{-1}\left(Q_{0}\right),\ldots,\varphi^{-1}\left(Q_{d}\right)\right)\right| & \geq\frac{k\left|Q_{0}\right|\ldots\left|Q_{d}\right|}{n}-\varepsilon\left(\left|Q_{0}\right|\ldots\left|Q_{d}\right|\right)^{\frac{d}{d+1}}\\
 & =\left(\frac{c_{d}n}{d+1}\right)^{d}\left(\frac{kc_{d}}{d+1}-\varepsilon\right)\cdot
\end{align*}
Since this applies to every $\varphi:X^{\left(0\right)}\rightarrow\mathbb{R}^{d}$,
the quotient by $\left|X^{d}\right|=\frac{k}{d+1}{n \choose d}$ gives
a lower bound for the geometric expansion of $X$: 
\[
\mathrm{overlap}\left(X\right)\geq\frac{\left(\frac{c_{d}n}{d+1}\right)^{d}\left(\frac{kc_{d}}{d+1}-\varepsilon\right)}{\left|X^{d}\right|}\geq\frac{c_{d}^{d}}{e^{d+1}}\left(c_{d}-\frac{\varepsilon\left(d+1\right)}{k}\right).
\]

This is used in \cite{parzanchevski2012isoperimetric} to establish
the overlap property for random complexes in the Linial-Meshulam model
\cite{Linial2006}: It is shown that if the expected degree of a $\left(d-1\right)$-cell
grows logarithmically in the number of vertices then the complexes
have geometric overlap asymptotically almost surely. 

While bounds on the spectrum give some geometric overlap properties,
it is much more difficult to get the topological overlap property.
The only result known to us is the following Theorem of Gromov (see
\cite{Matouvsek2011} for a simplified proof; though still highly
non-trivial):
\begin{thm}
If $X$ has normalized $\mathbb{F}_{2}$-coboundary expansion $\mathcal{\widetilde{E}}_{i}\left(X\right)\geq\varepsilon_{i}$
for all $1\leq i\leq d$ then $X$ has the $\varepsilon$-topological
overlap property for some $\varepsilon=\varepsilon\left(\varepsilon_{1},\ldots,\varepsilon_{d},d\right)>0$.
\end{thm}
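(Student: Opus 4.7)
The plan is to follow the cochain-cascade strategy due to Gromov, in the streamlined form of Matou\v sek. First I would reduce to the case where $\widetilde\varphi:X\to\mathbb R^d$ is simplex-wise affine and in general position with respect to a reference configuration; this perturbation is harmless since the overlap property is stable under $C^0$-small deformations of $\widetilde\varphi$. Fix then a generic reference affine $d$-simplex $\Sigma=[q_0,\ldots,q_d]\subset\mathbb R^d$, chosen large enough that a definite positive fraction of $\varphi(X^{(0)})$ lies inside $\Sigma$ (averaging over the position of $\Sigma$ if necessary).

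The cascade consists of cochains indexed by the faces of $\Sigma$. For each $k$-dimensional face $\tau\subseteq\Sigma$ and each $\alpha\in X^{(d-k)}$, transversality makes $\widetilde\varphi(\alpha)\cap\tau$ a finite set, so we may define $c_\tau\in C^{d-k}(X,\mathbb F_2)$ by
\[
c_\tau(\alpha)\;=\;\bigl|\widetilde\varphi(\alpha)\cap\tau\bigr|\pmod 2.
\]
Applying a mod-$2$ Stokes argument to the $1$-chain $\widetilde\varphi(\alpha)\cap\tau$ for $\alpha\in X^{(d-k+1)}$, using the identity $\partial(\widetilde\varphi(\alpha)\cap\tau)=\widetilde\varphi(\partial\alpha)\cap\tau+\widetilde\varphi(\alpha)\cap\partial\tau$, yields the fundamental cochain relation
\[
\delta c_\tau\;=\;\sum_{\tau'\subset\partial\tau,\;\dim\tau'=k-1}c_{\tau'}.
\]
At the extremes, $c_\Sigma\in C^0(X,\mathbb F_2)$ is the indicator of vertices mapped into $\Sigma$, and $c_{q_j}\in C^d(X,\mathbb F_2)$ is the indicator of $d$-simplices whose image contains the point $q_j$; the desired conclusion is that some $\|c_{q_j}\|\ge\varepsilon\,|X^{(d)}|$.

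The propagation from $c_\Sigma$ to some $c_{q_j}$ is carried out inductively, from $k=d$ down to $k=0$. Suppose we have isolated a $k$-face $\tau$ with $\|[c_\tau]\|\ge W_k\cdot|X^{(d-k)}|$. The hypothesis $\widetilde{\mathcal E}_{d-k+1}(X)\ge\varepsilon_{d-k+1}$ then gives
\[
\|\delta c_\tau\|\;\ge\;\varepsilon_{d-k+1}\,\|[c_\tau]\|\,\frac{|X^{(d-k+1)}|}{|X^{(d-k)}|}\;\ge\;\varepsilon_{d-k+1}\,W_k\,|X^{(d-k+1)}|,
\]
and since the Stokes identity writes $\delta c_\tau$ as a sum of $k+1$ cochains $c_{\tau'}$, a pigeonhole supplies some $\tau'$ with $\|c_{\tau'}\|\ge\tfrac{\varepsilon_{d-k+1}}{k+1}W_k\,|X^{(d-k+1)}|$. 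Iterating all $d$ steps, starting from $W_d=\|c_\Sigma\|/|X^{(0)}|\gtrsim 1$, yields $\|c_{q_j}\|\ge\varepsilon\,|X^{(d)}|$ for some $j$, with $\varepsilon=\varepsilon(\varepsilon_1,\ldots,\varepsilon_d,d)>0$ of the form $\prod_i\varepsilon_i/(i+1)$ times the initial density.

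The hard part will be bridging the gap between the absolute weight $\|c_{\tau'}\|$ delivered by the pigeonhole and the class norm $\|[c_{\tau'}]\|$ demanded by the next application of expansion: a priori $c_{\tau'}$ could lie very close to a coboundary, collapsing $W_{k-1}$. The remedy, and the technical core of Gromov's argument, is to average the construction over a suitable generic family of reference simplices $\Sigma$ so that the cohomology class $[c_\tau]$ is non-trivial on average, and, in parallel, to replace each $c_{\tau'}$ by an optimal representative $c_{\tau'}+\delta h_{\tau'}$ of minimal weight within its class before re-entering the cascade. Orchestrating these minimizations so that the Stokes identity is preserved up to controlled error terms, and that the final cochain $c_{q_j}$ retains linear weight in $|X^{(d)}|$, is where the intricate combinatorial bookkeeping cannot be avoided; the reduction to piecewise-linear generic position, while standard, is an additional technical step requiring care when $\widetilde\varphi$ has low regularity.
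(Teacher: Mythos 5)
The paper does not prove this theorem; it quotes it from Gromov \cite{Gromov2010} and points the reader to \cite{Matouvsek2011} for a streamlined exposition, so there is no in-paper argument to compare your attempt against. That said, your sketch does reproduce the scaffolding of the known proof: the cochains $c_\tau$ attached to the faces of a reference configuration via mod-$2$ intersection numbers, the Stokes relation $\delta c_\tau=\sum_{\tau'\subset\partial\tau}c_{\tau'}$, and the intended cascade from a dense $0$-cochain $c_\Sigma$ down to a dense $d$-cochain $c_{q_j}$, with one application of $\widetilde{\mathcal E}_i$ per step. You also correctly identify where the real difficulty lies.

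Two points nevertheless keep this from being a proof, even in outline. First, the opening reduction is wrong as stated: a continuous $\widetilde\varphi$ cannot in general be $C^0$-approximated by a \emph{simplex-wise affine} map on $X$, and if it could, you would merely be re-proving the geometric overlap property, which is strictly weaker and already accessible from spectral bounds via the mixing lemma. The correct reduction is to PL maps in general position on arbitrarily fine \emph{subdivisions} of $X$, so that each image $\widetilde\varphi(\alpha)$ is a general-position PL chain rather than an affine simplex; the passage back from PL to arbitrary continuous $\widetilde\varphi$ then relies on stability of the mod-$2$ degree. Second, and decisively, the inductive step does not close. Coboundary expansion bounds $\|\delta c_\tau\|$ from below in terms of the \emph{class} norm $\|[c_\tau]\|$, whereas the pigeonhole applied to $\delta c_\tau=\sum_{\tau'}c_{\tau'}$ yields only a lower bound on the \emph{raw} weight $\|c_{\tau'}\|$ of some summand; nothing so far rules out that this $c_{\tau'}$ is itself a coboundary, in which case $\|[c_{\tau'}]\|=0$ and the cascade stalls at the next stage. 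You flag this as the hard part, but the remedies you gesture at do not repair it: replacing $c_{\tau'}$ by a minimal-weight representative in its coset does not increase $\|[c_{\tau'}]\|$, since that quantity is by definition the minimum over the coset, and ``averaging over reference simplices'' requires a concrete quantitative lemma showing the average class norm stays bounded below --- which is precisely the substance of Gromov's argument and of Matou\v{s}ek--Wagner's simplification of it, and is not supplied here. The proposal is therefore a fair \emph{description} of the proof's architecture, but the load-bearing beam is missing.
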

Still, we do not know any example of higher dimensional complexes
of bounded degree with the $\mathbb{F}_{2}$-coboundary expansion
property. It is tempting to conjecture that the finite quotients $X$
of a fixed high-rank Bruhat-Tits building of dimension $d$, with
trivial cohomology over $\mathbb{F}_{2}$, form such a family. We
end with this question which seems fundamental for further progress.

\bibliographystyle{alpha}
\bibliography{Japan}
 
\end{document}